\documentclass{article}
\usepackage{color}
\usepackage{amsfonts}
\usepackage{tikz}
\usepackage{amssymb}
\usepackage{amsmath}
\usepackage{circuitikz}
\usepackage{amsthm}
\usepackage[mathscr]{euscript}
\usepackage{hyperref}
\usepackage{geometry}

\usepackage{authblk}
\usepackage[T1]{fontenc}
\usepackage[utf8]{inputenc}

\newtheorem{theorem}{Theorem}[section]

\newtheorem{definition}{Definition}[section]
\newtheorem{lemma}{Lemma}[section]
\newtheorem{proposition}{Proposition}[section]
\newtheorem{remark}{Remark}[section]

\newtheorem{assump}{Assumption}[section]
\numberwithin{equation}{section}

\newcommand{\m}{\mathbb}
\newcommand{\ml}{\mathcal}

\newcommand{\p}{\partial}
\newcommand{\pprec}{\prec\!\!\!\prec}

\makeatletter
\newcommand\RSloop{\@ifnextchar\bgroup\RSloopa\RSloopb}
\makeatother
\newcommand\RSloopa[1]{\bgroup\RSloop#1\relax\egroup\RSloop}
\newcommand\RSloopb[1]%
{\ifx\relax#1%
	\else
	\ifcsname RS:#1\endcsname
	\csname RS:#1\endcsname
	\else
	\GenericError{(RS)}{RS Error: operator #1 undefined}{}{}%
	\fi
	\expandafter\RSloop
	\fi
}
\newcommand\X{0}
\newcommand\RS[1]%
{\begin{tikzpicture}
		[every node/.style=
		{circle,draw,fill,minimum size=1.5pt,inner sep=0pt,outer sep=0pt},
		line cap=round
		]
		\coordinate(\X) at (0,0);
		\RSloop{#1}\relax
	\end{tikzpicture}
}
\makeatother
\newcommand\RSdef[1]{\expandafter\def\csname RS:#1\endcsname}
\newlength\RSu
\RSdef{i}{\draw (\X) -- +(90:\RSu) node{};}
\RSdef{l}{\draw (\X) -- +(135:\RSu) node{};}
\RSdef{r}{\draw (\X) -- +(45:\RSu) node{};}
\RSdef{I}{\draw (\X) -- +(90:\RSu) coordinate(\X I);\edef\X{\X I}}
\RSdef{L}{\draw (\X) -- +(135:\RSu) coordinate(\X L);\edef\X{\X L}}
\RSdef{R}{\draw (\X) -- +(45:\RSu) coordinate(\X R);\edef\X{\X R}}
\RSdef{n}{\draw node{};}
\RSdef{c}{\draw ($\circ$);}
\begin{document}
		\author[1,\footnote{Email: zhangshlin9@mail2.sysu.edu.cn}]{\scshape  Shuolin Zhang}
	\author[1,\footnote{Email: mcsyzy@mail.sysu.edu.cn}]{\scshape  Zhaoyang Yin}
	\affil[1]{\slshape School of Science, Sun Yat-sen University, ShenZhen 518107, PR China}
	\title{Singular Stochastic Heat Equations with Fractional Memory}
	\date{}
	\maketitle
      \begin{abstract}
        We extend the paracontrolled distribution approach of Gubinelli, Imkeller, and Perkowski to singular stochastic heat equations with Caputo fractional time derivatives. By establishing Schauder estimates for the time-fractional heat flow and controlling history terms, we address the difficulties arising from temporal memory and the lack of the semigroup property. For the one-dimensional rough Burgers-type equation driven by space-time white noise with spatial dissipation \((-\Delta)^\sigma\), we prove global well-posedness for \(\sigma\in(5/6,1]\) and \(\nu\in(6\sigma/(12\sigma-5),1]\). For the heat equation with nonlinear multiplicative spatial white noise on the two-dimensional torus, we establish local well-posedness of renormalized solutions and an \(L^\infty\) blow-up criterion for every \(\nu\in(0,1]\).
    \end{abstract}
	\tableofcontents
	\section{Introduction}
\par  Heat conduction in classical homogeneous media is commonly described by Fourier's law, which leads to the standard heat equation. This model assumes that the heat flux responds instantaneously to the current temperature gradient and that the underlying transport mechanism is Markovian. However, experimental observations in various complex materials, including porous media, composite materials, biological tissues, and heterogeneous geological structures, have shown that heat transport may exhibit anomalous diffusion and long-term \textbf{``memory effects"}. In such systems, energy carriers can undergo trapping, retention, and multiscale interactions, leading to significant deviations from the classical diffusion law.
\par A natural way to incorporate memory into heat conduction is to replace the instantaneous temporal response by a nonlocal one, so that the present evolution depends on the past history of the system. Such memory effects can be described within the framework of Volterra-type integro-differential equations; see, for instance, \cite{pruss2013evolutionary,zacher2009weak}. More generally, one may introduce a \textbf{memory kernel $k$} and consider an evolution equation of the form
\begin{equation}\label{eq:general-memory}
    \partial_t\bigl(k*(u-u_0)\bigr)
    =
    \Delta u,
\end{equation}
where
\[
    (k*v)(t)
    :=
    \int_0^t k(t-s)v(s)\,ds.
\]
The kernel $k$ determines how the past states of the system contribute to its present evolution and thus characterizes the underlying memory mechanism. Different choices of $k$ lead to different types of temporal memory. In particular, fractional diffusion equations arise as a distinguished example corresponding to power-law memory kernels; see, e.g., \cite{metzler2000random,mainardi2007fundamental,mainardi2022fractional}.

\par In this work, we focus on a particular but fundamental example. For $\nu\in(0,1)$, let
\begin{equation}
    k_\nu(t)
    =
    \frac{t^{-\nu}}{\Gamma(1-\nu)}.
\end{equation}
Then
\begin{equation}
    \partial_t\bigl(k_\nu*(u-u_0)\bigr)
    =
    \partial_t^\nu u,
\end{equation}
where $\partial_t^\nu$ denotes the Caputo fractional derivative of order $\nu$, introduced by Caputo in his study of linear models of dissipation \cite{caputo1966linear,caputo1967linear}. Consequently, \eqref{eq:general-memory} reduces to the time-fractional heat equation
\begin{equation}
    \partial_t^\nu u
    =
    \Delta u.
\end{equation}
Such anomalous diffusion behavior can be interpreted through the continuous-time random walk (CTRW) theory; see, for instance, \cite{metzler2000random,meerschaert2002stochastic}. In particular, heavy-tailed waiting times lead to subdiffusive dynamics characterized by particle trapping and long resting periods. Probabilistically, the resulting time-fractional diffusion can be represented in terms of a Brownian motion time-changed by an inverse stable subordinator \cite{meerschaert2002stochastic,meerschaert2013inverse}. This representation provides a natural interpretation of the memory effect encoded by the Caputo derivative and, correspondingly, of the time-fractional heat kernel as the transition density of a non-Markovian diffusion process.
\subsection{Time-fractional model}
\par Beyond the linear diffusion setting, nonlinear effects can be incorporated 
into time-fractional models, leading to the time-fractional Burgers equation. 
A typical generalized form is
\begin{equation}
    \partial_t^\nu u-\Delta u=u^p\partial_xu,
    \qquad 0<\nu<1,\quad p>0.
\end{equation}
Such equations can be viewed as nonlinear counterparts of the time-fractional 
heat equation, in which the Caputo fractional derivative accounts for temporal 
memory and anomalous diffusion, while the Burgers-type nonlinearity describes 
nonlinear transport. The interplay between these memory effects, diffusion, 
and nonlinear convection provides a natural framework for modeling nonlinear 
anomalous transport processes. Time-fractional Burgers equations and their 
generalized forms have therefore been studied extensively; see, for instance, 
\cite{momani2006non,li2016linear,yokucs2017numerical} and the references therein.
\par In the presence of random perturbations, Hachemi and \O ksendal
\cite{oksendal2013stochastic} considered the time-fractional stochastic heat
equation on $\mathbb{R}^d$:
\begin{equation}
    \partial_t^\nu u
    =\lambda\Delta u+\sigma\xi(t,x),
\end{equation}
where $\xi(t,x)$ denotes space-time white noise satisfying
\begin{equation}
    \mathbb{E}[\xi(t,x)\xi(s,y)]
    =\delta(t-s)\delta(x-y).
\end{equation}
Here, the time-fractional derivative accounts for memory and anomalous
diffusion, while the additive space-time white noise represents random
fluctuations in the surrounding medium. Thus, the equation provides a
stochastic extension of the time-fractional heat equation for diffusion
processes evolving in a random or noisy environment. More generally,
state-dependent random perturbations lead to equations with multiplicative
noise. For example, Tuan \cite{tuan2020asymptotic} studied a time-fractional stochastic
elliptic equation of the form
\begin{equation}
    \partial_t^\nu u
    =\mathcal{L}u+\beta u+\gamma u\,\dot W_t,
\end{equation}
where $\mathcal{L}$ is a second-order elliptic operator and $\dot W_t$ denotes
temporal white noise. In contrast to additive noise, the amplitude of the
multiplicative noise depends on the state $u$, providing a model for random
perturbations whose intensity varies with the evolving system.

Motivated by these developments, in this paper we investigate several classes of stochastic time-fractional heat equations driven by different types of nonlinearities and random noises. Specifically, we consider the following models:
\begin{itemize}
    \item The time-fractional heat equation with Burgers-type nonlinearity:
    \begin{equation}\label{eq:fractional Rough Burgers}
        \partial_t^\nu u+(-\Delta)^\sigma u
        =\ml N(u)\partial_xu+\xi,
    \end{equation}
    where $\sigma\in(5/6,1]$, $\nu\in(6\sigma/(12\sigma-5),1]$, $\xi$ is a space-time white noise, and $\ml N\in C_b^3$.

    \item The time-fractional stochastic heat equation driven by multiplicative noise:
    \begin{equation}
        \partial_t^\nu u-\Delta u
        =F(u)\diamond\xi,
    \end{equation}
    where $\nu\in(0,1]$. Here $\xi$ denotes a spatial white noise, which will bring the singularity Hence $\diamond$ denotes the renormalized product $F\in C_b^3$.
\end{itemize}
\subsection{Singular SPDEs}
When $\nu=1$, \eqref{eq:fractional Rough Burgers} reduces to the rough Burgers-type equation
\begin{equation}\label{eq;Rough Burgers}
\partial_t u-\Delta u
=\ml N(u)\partial_xu+\xi,
\end{equation}
which is a prototypical example of a singular stochastic partial differential equation (SPDE). The main difficulty lies in the fact that the solution is typically too irregular for the nonlinear term $\ml N(u)\partial_xu$ to be interpreted in the classical sense.
\par To illustrate the source of this difficulty, consider first the linear stochastic equation
\begin{equation}
\partial_t \vartheta+(-\Delta)^\beta\vartheta=\xi.
\end{equation}
In the classical case $\nu=1$, the stochastic convolution $\vartheta$ belongs to $C_T\mathcal{C}^{\alpha}$ for $\alpha<\beta-\frac12$, as is well known; see, e.g., \cite{gubinelli2015paracontrolled}. This is also the natural spatial regularity one expects for the solution $u$ of \eqref{eq;Rough Burgers}. Consequently, when $\beta\leq 1$, the derivative $\partial_xu$ is generally too singular to be multiplied with $\ml N(u)$ in the classical sense. More precisely, the product $\ml N(u)\partial_xu$ falls outside the range of well-defined products provided by the standard paraproduct theory. This illustrates the intrinsic singularity of the equation.
\par Another classical example of a singular SPDE is the generalized parabolic Anderson model
\begin{equation}
\partial_tu-\Delta u=F(u)\diamond\xi,
\end{equation}
where $\xi$ is a spatial white noise on $\mathbb{T}^2$ and $\diamond$ denotes a suitable renormalized product. In this case, the singularity arises from the multiplication of the rough solution with the spatial white noise, which is itself a distribution rather than a classical function.
\par Over the past decade, substantial progress has been made in the analysis of such singular equations. One major breakthrough is Hairer's theory of regularity structures, which was inspired in part by the ideas of rough path theory and provides a systematic framework for solving singular SPDEs beyond the scope of classical function spaces; see \cite{hairer2013solving,hairerTheoryRegularityStructures2014}. Around the same period, Gubinelli and his collaborators developed the theory of paracontrolled distributions, combining harmonic analysis with controlled rough paths to construct and analyze singular products. The paracontrolled approach is particularly flexible and has been successfully applied to a broad class of equations with different analytic structures, including both parabolic and dispersive equations; see \cite{gubinelli2015paracontrolled,gubinelli2017kpz,gubinelli2023paracontrolled,catellier2018paracontrolled}.
\subsection{Main difficulties}
\par In the presence of memory, however, the analysis becomes substantially more delicate. Although the general philosophy for treating singularities remains similar to that in the classical parabolic setting, the nonlocal dependence on the past fundamentally changes the underlying evolution. In particular, the classical heat semigroup is replaced by a family of operators which no longer enjoys the semigroup property. Consequently, many arguments relying on the Markovian and local-in-time structure of the classical heat flow are no longer directly available.

For the power-law memory considered in this work, these difficulties are reflected in the following three parts: 
More specifically, the presence of temporal memory gives rise to several analytical difficulties that distinguish the time-fractional setting from the classical parabolic case.

\begin{itemize}
    \item[(1)] \textbf{Regularity estimates for the time-fractional heat flow.}
    Consider the generalized time-space fractional equation in its Duhamel form
    \begin{equation}\label{eq;duhamel}
        u(t)
        =
        U(t)u_0
        +
        \int_0^t V(t-s)f(s)\,\mathrm ds,
    \end{equation}
    where $U(t)$ and $V(t)$ denote the solution operators associated with the initial condition and the forcing term, respectively. Although various analytic properties of these operators have been studied in the literature; see, for instance, \cite{li2022finite,jin2021fractional,liu2018quasi}, the available estimates are not directly suited to the analysis of singular SPDEs. In particular, the paracontrolled approach requires precise Schauder estimates in spatial H\"older--Besov spaces, together with quantitative time-regularity estimates and corresponding bounds for the Duhamel operator. Establishing such estimates for $U(t)$ and $V(t)$ is therefore one of the basic analytic ingredients of our approach.

    \item[(2)] \textbf{Finite smoothing induced by temporal memory.}
    A further difference from the classical parabolic setting is that the time-fractional solution operators possess only finite spatial smoothing at each fixed positive time. When $\nu=1$, the operators reduce, up to the corresponding convention for the forcing operator, to the fractional heat semigroup
    \begin{equation}
        e^{-t(-\Delta)^\sigma},
    \end{equation}
    which is infinitely smoothing for every $t>0$. In contrast, for $0<\nu<1$, the operators $U(t)$ and $V(t)$ are described by Mittag--Leffler functions and exhibit only polynomial decay at high spatial frequencies. Consequently, they provide only a finite gain of spatial regularity, even for fixed $t>0$. This loss of infinite smoothing is a direct manifestation of the nonlocal-in-time dynamics and plays an essential role in determining the admissible regularity regime of the singular equations considered below.
    \item[(3)] \textbf{Loss of commutator smoothing for the Caputo derivative.}
    A more subtle difficulty appears when one attempts to adapt the paracontrolled approach of \cite{gubinelli2015paracontrolled}. In the classical setting, one introduces a modified paraproduct and writes, schematically,
    \begin{equation}
        u
        =
        F(u)\pprec\vartheta+u^\sharp,
    \end{equation}
    where $\pprec$ is designed to improve the commutation between the paraproduct and the parabolic operator. In particular, the construction exploits the regularizing behavior of commutators of the form
    \begin{equation}
        \partial_t(f\pprec g)
        -
        f\pprec\partial_t g.
    \end{equation}

    Such a mechanism cannot be transferred directly to the time-fractional setting. The essential obstruction comes from the memory encoded in the Caputo derivative. More precisely, the Caputo derivative can be decomposed in the form
    \begin{equation}
        \partial_t^\nu f
        =
        {}_{-\infty}\partial_t^\nu f
        -
        \langle K_t^-,f\rangle,
    \end{equation}
    where ${}_{-\infty}\partial_t^\nu$ is the translation-invariant fractional derivative on the whole time line and can be treated as a Fourier multiplier, whereas $\langle K_t^-,f\rangle$ is a boundary contribution arising from the past history. The Fourier-multiplier part retains a useful commutator structure, but the boundary term does not provide the corresponding smoothing effect. As a consequence, the commutator
    \begin{equation}
        \partial_t^\nu(f\pprec g)
        -
        f\pprec\partial_t^\nu g
    \end{equation}
    does not, in general, enjoy the same regularity improvement as in the classical case. This prevents a direct application of the standard modified-paraproduct argument and requires a different treatment of the memory contribution.
    \item[(4)]{ \textbf{Failure of the semigroup property and difficulties in restarting the evolution.}
A further challenge in establishing global well-posedness arises from the temporal memory inherent in the Caputo derivative. In the classical parabolic setting, the fractional heat semigroup
\begin{equation}
    P(t)=e^{-t(-\Delta)^\sigma}
\end{equation}
satisfies
\begin{equation}
    P(t+s)=P(t)P(s).
\end{equation}
Consequently, the evolution can be restarted at any intermediate time using the solution at that time as new initial data. This property allows local well-posedness estimates to be iterated over successive time intervals and plays a fundamental role in standard continuation arguments.

In contrast, for $0<\nu<1$, the time-fractional solution operators $U(t)$ and $V(t)$ do not, in general, satisfy the corresponding semigroup identities. The solution at a given time therefore does not contain all the information needed to determine its subsequent evolution, since the Caputo derivative retains contributions from the entire preceding history. In particular, a solution represented by \eqref{eq;duhamel} cannot generally be restarted at a time $t_0>0$ by simply replacing $u_0$ with $u(t_0)$ in the same Duhamel formula. Instead, the evolution on $[t_0,t_0+T_0]$ contains additional terms depending on the initial data and the forcing accumulated over $[0,t_0]$.

These history-dependent contributions obstruct a direct iteration of the local well-posedness argument and require estimates that remain compatible with the memory structure. To overcome this difficulty, we introduce auxiliary restarted evolutions and compare them with the original solution, while controlling the resulting history terms through suitable Schauder estimates. This provides a mechanism for propagating local bounds across successive time intervals and is an essential ingredient in our global well-posedness analysis. }
\end{itemize}
\subsection{Contributions and Organization of the Paper}
\par The main purpose of this paper is to address the above difficulties and to develop an analytical framework for singular SPDEs in the presence of temporal memory. We first establish a regularity theory for the solution operators associated with the generalized time-space fractional heat equation. This includes Schauder-type estimates and estimates for the corresponding Duhamel operator. Particular attention is paid to the finite smoothing property of the fractional heat flow and its effect on the admissible spatial regularities.

\par Building on this framework, we construct local solutions to the two classes of singular SPDEs introduced above, namely equations with Burgers-type nonlinearities and equations with multiplicative noise. We further establish the convergence of suitable smooth approximations, together with the corresponding renormalization. Our approach is also compatible with the classical parabolic setting. When $\nu=1$, the memory effect disappears, and the framework applies to the corresponding time-space fractional equations, thereby providing a direct connection with the paracontrolled approach developed in \cite{gubinelli2015paracontrolled}.

\par The remainder of the paper is organized as follows. In Section~\ref{sec;basic tools}, we introduce the basic analytic tools and notation used throughout the paper. In Section~\ref{sec;analysis}, we develop the regularity theory for the time-fractional heat flow and establish the Schauder-type and related estimates required in the subsequent analysis. In Section~\ref{sec;Rough Burgers}, we apply the developed framework to the time-fractional rough Burgers equation and establish the existence and uniqueness of a local weak solution. Finally, in Section~\ref{sec;time-fractional}, we study the time-fractional stochastic heat equation with nonlinear multiplicative noise and establish the corresponding local well-posedness result.
\section{Basic tools}\label{sec;basic tools}
\subsection{Time-fractional heat flow}
We first recall the definition of the Caputo fractional derivative.
\begin{definition}[Caputo fractional derivative]
    Let $\nu\in(0,1)$. For a sufficiently regular function $f$, the
    Caputo derivative of order $\nu$ is defined by
    \begin{equation}\label{eq:Caputo-derivative}
        \partial_t^\nu f(t)
        :=
        \frac{1}{\Gamma(1-\nu)}
        \int_0^t
        (t-s)^{-\nu}\partial_s f(s)\,\mathrm{d}s.
    \end{equation}
    where $\Gamma$ is the Gamma function.
\end{definition}

We next introduce the solution operators associated with the
time-fractional heat equation and establish some of their smoothing
properties. Consider
\begin{equation}\label{eq:time-fractional-heat}
    \partial_t^\nu u-\Delta u=f,
    \qquad
    u(0,x)=u_0(x).
\end{equation}
Motivated by Duhamel's formula for the classical heat equation, we
write the solution of \eqref{eq:time-fractional-heat} in the mild form
\begin{equation}
    u(t)
    =
    U(t)u_0
    +
    \int_0^t V(t-s)f(s)\,\mathrm{d}s.
\end{equation}
In the classical case $\nu=1$, both operators reduce to the heat
semigroup,
\[
    U(t)=V(t)=e^{t\Delta},
\]
whose smoothing properties are classical; see, for instance,
\cite{gubinelli2015paracontrolled,bahouri2011fourier}. The purpose of this section is to derive analogous
estimates for the time-fractional case.

To this end, for $\delta>0$ and $\theta\in(\pi/2,\pi)$, we introduce
the Hankel-type contour
\begin{equation}\label{eq:hankel-contour}
    \Gamma_{\theta,\delta}
    :=
    \left\{
        \eta\in\mathbb{C}:
        |\eta|=\delta,\ |\arg\eta|\leq\theta
    \right\}
    \cup
    \left\{
        \eta\in\mathbb{C}:
        \eta=\rho e^{\pm i\theta},\ \rho\geq\delta
    \right\},
\end{equation}
oriented counterclockwise from the lower ray to the upper ray.
Since $\theta<\pi$, for every
$\eta\in\Gamma_{\theta,\delta}$ we may write
\[
    \eta=|\eta|e^{i\varphi},
    \qquad
    \varphi=\arg\eta\in[-\theta,\theta]\subset(-\pi,\pi).
\]
Throughout this section, we therefore understand the fractional power
$\eta^\nu$ in the principal branch, namely,
\[
    \eta^\nu
    =
    |\eta|^\nu e^{i\nu\arg\eta}.
\]
In particular, on the two rays of $\Gamma_{\theta,\delta}$,
\[
    \eta=\rho e^{\pm i\theta}
    \quad\Longrightarrow\quad
    \eta^\nu=\rho^\nu e^{\pm i\nu\theta}.
\]

We impose the following assumption on the choice of the contour.

\begin{assump}\label{assump;contour}
    Let $\delta>0$. The angle $\theta$ is chosen such that
    \begin{itemize}
        \item[(i)] $\theta\in(\pi/2,\pi)$ if $\nu=1$;
        \item[(ii)] $\theta\in(\pi/2,\min(\pi,\pi/(2\nu)))$ if $\nu\in(0,1)$.
    \end{itemize}
\end{assump}

The restriction on $\theta$ ensures that the contour lies in a sector
on which the relevant resolvent is well defined and the contour
integrals below have the required decay. By the standard Laplace
transform and contour representation for fractional evolution
equations (see, e.g., \cite{jin2021fractional}), we obtain the following representation.

\begin{lemma}\label{le;duhamel}
    Let $u$ be a solution to the time-space fractional heat equation
    \begin{equation}
        \partial_t^\nu u+(-\Delta)^{\sigma}u=f,
        \qquad
        u(0,x)=u_0(x).
    \end{equation}
    Then $u$ admits the mild representation
    \begin{equation}\label{eq:duhamel}
        u(t)
        =
        U(t)u_0
        +
        \int_0^t V(t-s)f(s)\,\mathrm{d}s,
    \end{equation}
    where $U(0)=I$, and for any $t>0$,
    \begin{align}
        U(t)g
        &=
        \frac{1}{2\pi\mathrm{i}}
        \int_{\Gamma_{\theta,\delta}}
        e^{\eta t}\eta^{\nu-1}
        (\eta^\nu+(-\Delta)^\sigma)^{-1}g
        \,\mathrm{d}\eta,
        \label{eq:U-contour}\\
        V(t)g
        &=
        \frac{1}{2\pi\mathrm{i}}
        \int_{\Gamma_{\theta,\delta}}
        e^{\eta t}
        (\eta^\nu+(-\Delta)^\sigma)^{-1}g
        \,\mathrm{d}\eta.
        \label{eq:V-contour}
    \end{align}
    Here $\Gamma_{\theta,\delta}$ is the contour defined in
    \eqref{eq:hankel-contour}, with $\theta$ satisfying
    Assumption \ref{assump;contour}.
\end{lemma}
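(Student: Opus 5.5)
The plan is to work on the spatial Fourier side, where $(-\Delta)^\sigma$ acts as multiplication by $\lambda=|k|^{2\sigma}\ge 0$ on the $k$-th mode (on the torus; on $\mathbb{R}^d$ one uses $|\xi|^{2\sigma}$ in the same way). This reduces the equation to the scalar fractional ODE
\[
\partial_t^\nu \hat u_k+\lambda\hat u_k=\hat f_k,\qquad \hat u_k(0)=\hat u_{0,k}.
\]
We then take Laplace transforms in $t$. The basic identity is that, for $\operatorname{Re}\eta>0$ large,
\[
\mathcal{L}[\partial_t^\nu g](\eta)=\eta^\nu\hat g(\eta)-\eta^{\nu-1}g(0).
\]
This follows from \eqref{eq:Caputo-derivative}, which writes $\partial_t^\nu g=k_\nu*g'$, together with $\mathcal{L}[k_\nu](\eta)=\eta^{\nu-1}$ and $\mathcal{L}[g'](\eta)=\eta\hat g(\eta)-g(0)$. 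Solving the transformed equation gives
\[
\hat u_k(\eta)=\eta^{\nu-1}(\eta^\nu+\lambda)^{-1}\hat u_{0,k}+(\eta^\nu+\lambda)^{-1}\hat f_k(\eta).
\]
The first term is the transform of $U(t)$ acting on the mode. The second term is a product of Laplace transforms, so by the convolution theorem it is the transform of $\int_0^t V(t-s)f(s)\,ds$, where $V(t)$ is the inverse transform of $(\eta^\nu+\lambda)^{-1}$. This is exactly the structure of \eqref{eq:duhamel}, and it remains to identify the inverse transforms with \eqref{eq:U-contour}--\eqref{eq:V-contour}.

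For that identification, Bromwich inversion first expresses each inverse transform as an integral over a vertical line $\operatorname{Re}\eta=c>0$. We then deform this line onto $\Gamma_{\theta,\delta}$. Two facts about the integrands justify the deformation.

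\emph{Analyticity.} The functions $\eta^{\nu-1}(\eta^\nu+\lambda)^{-1}$ and $(\eta^\nu+\lambda)^{-1}$ are analytic in the sector $|\arg\eta|<\pi$ minus the zeros of $\eta^\nu+\lambda$. If $|\arg\eta|\le\theta$ with $\nu\theta<\pi/2$ (Assumption~\ref{assump;contour}), then $|\arg\eta^\nu|<\pi/2$. Hence $\operatorname{Re}\eta^\nu>0$, $\eta^\nu+\lambda\neq0$ for all $\lambda\ge0$, and the closed region to the right of $\Gamma_{\theta,\delta}$ is pole-free.

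\emph{Resolvent bound.} From the same angular separation we get the uniform-in-$\lambda$ estimate
\[
|\eta^\nu+\lambda|\ge c_\theta(|\eta|^\nu+\lambda),\qquad c_\theta=\sin\bigl((\pi-\nu\theta)/2\bigr)\ \text{or similar}.
\]

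With these, Cauchy's theorem applies to the closed contour formed by the Bromwich line, $\Gamma_{\theta,\delta}$, and two connecting arcs $|\eta|=R$. On those arcs, for fixed $t>0$, the integrands are $O(|\eta|^{-1})$ and $O(|\eta|^{-\nu})$. The factor $e^{\eta t}$ then makes the arc contributions vanish as $R\to\infty$, by a Jordan-lemma argument. On the parts of the arcs with $\operatorname{Re}\eta<0$ this is immediate. On the remaining parts it needs care, since $\nu<1$ gives only $|\eta|^{-\nu}$ decay. The principal-branch choice makes the integrand single-valued away from the negative real axis, which the contour avoids. On the rays of $\Gamma_{\theta,\delta}$, since $\theta>\pi/2$,
\[
|e^{\eta t}|=e^{\rho t\cos\theta}
\]
decays exponentially, so \eqref{eq:U-contour}--\eqref{eq:V-contour} converge absolutely for each $t>0$. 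The bound above is uniform in $\lambda$, which lets us sum the modes and define $U(t)$, $V(t)$ as operators. Finally, $U(0)=I$ holds by convention, and it is consistent with $\eta^{\nu-1}(\eta^\nu+\lambda)^{-1}\sim\eta^{-1}$ as $\eta\to\infty$ (initial value theorem).

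I expect the main obstacle to be rigor rather than the formal computation: specifying the class of solutions and data for which the Laplace transform exists and inversion holds pointwise. The plan is to assume $u$ and $f$ are of at most exponential growth in time with values in a Besov space, to justify everything mode by mode, and then to extend to the distributional setting of the paper by density. The alternative is simply to cite \cite{jin2021fractional} for this functional-analytic step, as the paper indicates. For $\nu=1$ the restriction is only $\theta\in(\pi/2,\pi)$, and the same argument gives the residue at $\eta=-\lambda$, that is, $e^{-t\lambda}$. This reproduces the semigroup and serves as a consistency check.
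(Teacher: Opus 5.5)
Your proposal is correct and follows the route the paper has in mind. The paper gives no argument of its own and simply invokes the standard Laplace-transform and contour-deformation representation from \cite{jin2021fractional}; you carry this out mode by mode (the Laplace transform of the Caputo derivative, solving the resolvent equation, and deforming the Bromwich line onto $\Gamma_{\theta,\delta}$ using $\nu\theta<\pi/2$ and the uniform bound $|\eta^\nu+\lambda|\gtrsim|\eta|^\nu+\lambda$). One small correction to the bookkeeping: on the connecting arcs it is the portion with $\operatorname{Re}\eta<0$ that needs Jordan's lemma, whereas the portion with $0\le\operatorname{Re}\eta\le c$ has bounded length and is trivially $O(R^{-\nu})$.
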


Let us briefly verify that the above representation is consistent with
the classical time derivative. When $\nu=1$, the two operators coincide,
and
\begin{align*}
    U(t)g=V(t)g
    =
    \frac{1}{2\pi\mathrm{i}}
    \int_{\Gamma_{\theta,\delta}}
    e^{\eta t}
    (\eta+(-\Delta)^\sigma)^{-1}g
    \,\mathrm{d}\eta.
\end{align*}
Indeed, for $\lambda\geq0$,
\begin{equation}\label{eq:classical-contour}
    \frac{1}{2\pi\mathrm{i}}
    \int_{\Gamma_{\theta,\delta}}
    \frac{e^\eta}{\eta+\lambda}\,\mathrm{d}\eta
    =
    e^{-\lambda}.
\end{equation}
Consequently, $U(t)$ and $V(t)$ are Fourier multipliers with symbols
\begin{align*}
    e^{-t|\xi|^{2\sigma}},
\end{align*}
and hence
\begin{equation}
    U(t)=V(t)=e^{-t(-\Delta)^\sigma}.
\end{equation}
Therefore, when $\nu=1$, \eqref{eq:duhamel} reduces to the usual
Duhamel formula for the fractional heat equation,
\begin{equation}
    u(t)
    =
    e^{-t(-\Delta)^\sigma}u_0
    +
    \int_0^t
    e^{-(t-s)(-\Delta)^\sigma}f(s)
    \,\mathrm{d}s.
\end{equation}
In particular, when $\nu=\sigma=1$, this further reduces to the
classical heat equation and
\begin{equation}
    u(t)
    =
    e^{t\Delta}u_0
    +
    \int_0^t e^{(t-s)\Delta}f(s)\,\mathrm{d}s.
\end{equation}

\subsection{Distribution theory}
We briefly recall some basic facts from Littlewood--Paley theory and
Bony's paraproduct decomposition. Let $\mathcal{S}(\mathbb{R})$ and
$\mathcal{S}'(\mathbb{R})$ denote the spaces of Schwartz functions and
tempered distributions, respectively. Let $(\Delta_j)_{j\geq -1}$ be
the nonhomogeneous dyadic blocks associated with a smooth dyadic
partition of unity in the Fourier space. More precisely, we choose
smooth nonnegative radial functions $\chi$ and $\rho$ such that
\[
    \chi(\xi)+\sum_{j\geq 0}\rho(2^{-j}\xi)=1,
    \qquad \xi\in\mathbb{R},
\]
where $\chi$ is supported in a neighborhood of the origin and $\rho$
is supported in an annulus. We then define
\[
    \Delta_{-1}u=\mathcal{F}^{-1}
    \bigl(\chi\,\mathcal{F}u\bigr),
    \qquad
    \Delta_j u
    =
    \mathcal{F}^{-1}
    \bigl(\rho(2^{-j}\cdot)\mathcal{F}u\bigr),
    \quad j\geq 0.
\]
With this notation, every $u\in\mathcal{S}'(\mathbb{R})$ admits the
Littlewood--Paley decomposition
\begin{equation}\label{eq:LP-decomposition}
    u=\sum_{j\geq -1}\Delta_j u
\end{equation}
in $\mathcal{S}'(\mathbb{R})$.

For $j\geq -1$, we also introduce the low-frequency cut-off operator
\[
    S_j u:=\sum_{k\leq j-1}\Delta_k u,
\]
with the convention that $\Delta_k=0$ for $k<-1$. Thus, $S_j u$
contains frequencies of size strictly smaller than $2^j$, whereas
$\Delta_j u$ localizes $u$ at frequencies of order $2^j$.

For $s\in\mathbb{R}$ and $p,r\in[1,\infty]$, the nonhomogeneous Besov
space $B_{p,r}^s(\mathbb{R})$ is defined by
\[
    B_{p,r}^s(\mathbb{R})
    :=
    \left\{
        f\in\mathcal{S}'(\mathbb{R})
        :
        \|f\|_{B_{p,r}^s}<\infty
    \right\},
\]
where
\begin{equation}\label{eq:Besov-norm}
    \|f\|_{B_{p,r}^s}
    :=
    \left(
        \sum_{j\geq -1}
        2^{jsr}\|\Delta_j f\|_{L^p}^r
    \right)^{1/r}
\end{equation}
when $r<\infty$, while
\[
    \|f\|_{B_{p,\infty}^s}
    :=
    \sup_{j\geq -1}
    2^{js}\|\Delta_j f\|_{L^p}.
\]
In particular, throughout the paper we use the notation
\[
    \mathcal{C}^s:=B_{\infty,\infty}^s.
\]
For noninteger $s>0$, this space agrees, up to equivalence of norms,
with the classical H\"older space of regularity $s$.

We next recall Bony's paraproduct decomposition. For sufficiently
regular functions $f$ and $g$, their product can be decomposed
according to the interactions between different frequency scales.
Define the paraproduct
\begin{equation}\label{eq:paraproduct}
    f\prec g
    :=
    \sum_{j\geq -1}S_{j-1}f\,\Delta_j g
    =
    \sum_{j\geq -1}
    \sum_{i<j-1}
    \Delta_i f\,\Delta_j g,
\end{equation}
and set
\[
    f\succ g:=g\prec f.
\]
The resonant product is defined by
\begin{equation}\label{eq:resonant-product}
    f\circ g
    :=
    \sum_{\substack{i,j\geq -1\\ |i-j|\leq 1}}
    \Delta_i f\,\Delta_j g.
\end{equation}
Accordingly, one has Bony's decomposition
\begin{equation}\label{eq:Bony-decomposition}
    fg
    =
    f\prec g+f\circ g+f\succ g.
\end{equation}
Here, $f\prec g$ describes the interaction between the low frequencies
of $f$ and the high frequencies of $g$, while $f\succ g$ describes the
opposite interaction. The resonant term $f\circ g$ contains the
interactions between comparable frequency scales.

This decomposition is particularly useful when $f$ and $g$ have low
regularity. Indeed, the paraproduct terms are well defined under rather
weak assumptions, whereas the resonant product requires a suitable
compatibility between the regularities of $f$ and $g$. More precisely,
\begin{lemma}[\cite{bahouri2011fourier}]
    \label{le;est;paraproduct}
    Let $\alpha,\beta\in\mathbb{R}$. Then the following estimates hold:
    \begin{itemize}
        \item[1.] For any $f\in L^\infty$ and
        $g\in\mathcal{C}^{\beta}$,
        \[
            \|f\prec g\|_{\mathcal{C}^{\beta}}
            \leq
            C\|f\|_{L^\infty}
            \|g\|_{\mathcal{C}^{\beta}}.
        \]

        \item[2.] If $\alpha<0$, then for any
        $f\in\mathcal{C}^{\alpha}$ and
        $g\in\mathcal{C}^{\beta}$,
        \[
            \|f\prec g\|_{\mathcal{C}^{\alpha+\beta}}
            \leq
            C\|f\|_{\mathcal{C}^{\alpha}}
            \|g\|_{\mathcal{C}^{\beta}}.
        \]

        \item[3.] If $\alpha+\beta>0$, then for any
        $f\in\mathcal{C}^{\alpha}$ and
        $g\in\mathcal{C}^{\beta}$, the resonant product
        $f\circ g$ is well defined and satisfies
        \[
            \|f\circ g\|_{\mathcal{C}^{\alpha+\beta}}
            \leq
            C\|f\|_{\mathcal{C}^{\alpha}}
            \|g\|_{\mathcal{C}^{\beta}}.
        \]
    \end{itemize}
    Here, the constant $C>0$ depends only on the regularity
    exponents and the choice of the Littlewood--Paley decomposition.
    By symmetry, the corresponding estimates for $f\succ g$ follow
    from the identity $f\succ g=g\prec f$.
\end{lemma}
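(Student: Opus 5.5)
The three estimates of Lemma~\ref{le;est;paraproduct} all follow from one mechanism: the frequency support of each dyadic piece together with Bernstein-type bounds. I would begin by recording two facts. First, for $j\ge 1$ (in fact for $j\ge j_0$, with $j_0$ fixed by the partition) the Fourier transform of $S_{j-1}f\,\Delta_j g$ lies in an annulus $2^j\mathcal{A}$. Second, the Fourier transform of $\Delta_i f\,\Delta_j g$ with $|i-j|\le1$ lies in a ball $2^j\mathcal{B}$. A standard lemma then converts these support properties into membership in $\mathcal{C}^\gamma$. If $u=\sum_j u_j$ with $\operatorname{supp}\hat u_j\subset 2^j\mathcal{A}$ and $\|u_j\|_{L^\infty}\le M2^{-j\gamma}$, then $\|u\|_{\mathcal{C}^\gamma}\lesssim M$ for every $\gamma\in\mathbb{R}$. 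The same conclusion holds with balls in place of annuli, but only for $\gamma>0$. To see this, note that $\Delta_k u$ only involves $u_j$ with $|j-k|\le N$ in the annulus case, and $j\ge k-N$ in the ball case. The latter gives a geometric tail $\sum_{j\ge k-N}2^{-j\gamma}\lesssim 2^{-k\gamma}$, which converges precisely when $\gamma>0$.

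With this in hand, each item reduces to bounding the $L^\infty$ norm of the dyadic pieces.

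\textbf{Item 1.} Since $S_{j-1}$ is convolution with an $L^1$-bounded kernel, we have $\|S_{j-1}f\|_{L^\infty}\le C\|f\|_{L^\infty}$. Hence $\|S_{j-1}f\,\Delta_j g\|_{L^\infty}\le C\|f\|_{L^\infty}2^{-j\beta}\|g\|_{\mathcal{C}^\beta}$, and the annulus lemma gives the bound for all $\beta\in\mathbb{R}$.

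\textbf{Item 2.} When $\alpha<0$, summing dyadic blocks gives
\[
\|S_{j-1}f\|_{L^\infty}\le\sum_{i\le j-2}2^{-i\alpha}\|f\|_{\mathcal{C}^\alpha}\lesssim 2^{-j\alpha}\|f\|_{\mathcal{C}^\alpha}.
\]
This geometric sum is dominated by its largest term exactly because $\alpha<0$. Each piece is therefore bounded by $2^{-j(\alpha+\beta)}\|f\|_{\mathcal{C}^\alpha}\|g\|_{\mathcal{C}^\beta}$, and the annulus lemma concludes.

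\textbf{Item 3.} Each resonant piece satisfies $\|\Delta_i f\Delta_j g\|_{L^\infty}\lesssim 2^{-j(\alpha+\beta)}\|f\|_{\mathcal{C}^\alpha}\|g\|_{\mathcal{C}^\beta}$. Because these pieces only have ball-supported Fourier transforms, we need the ball version of the lemma. This is the one place where a genuine condition enters: the hypothesis $\alpha+\beta>0$ is exactly what makes the tail sum converge. The same convergence also shows that the series defining $f\circ g$ converges in $\mathcal{S}'$, so $f\circ g$ is well defined, and it yields the stated estimate.

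The main technical point is the ball version of the summation lemma; everything else is bookkeeping. The statements for $f\succ g$ follow from the identity $f\succ g=g\prec f$. All constants depend only on the exponents and on the fixed functions $\chi,\rho$, as stated. This is exactly the argument in \cite{bahouri2011fourier}, which I would cite for the details.
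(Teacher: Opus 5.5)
Your proposal is correct and is the standard Bahouri--Chemin--Danchin argument: Fourier supports of the dyadic pieces lie in annuli or balls, each piece gets an $L^\infty$ bound, and a summation lemma finishes, with its ball version being exactly where $\alpha+\beta>0$ is needed. The paper gives no proof of its own and simply cites \cite{bahouri2011fourier}, so your route is the intended one.
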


\begin{lemma}[\cite{gubinelli2015paracontrolled}]\label{le;para-differential calculus}
     Let $\alpha \in (0,1)$, $\beta \in (0,\alpha)$, $\gamma < 0$ be such that $\alpha +\beta +\gamma > 0$ but $\alpha +\gamma < 0$. Let $F\in C_{b}^{1 + \beta /\alpha}$. Then there exists a locally bounded map $\Pi_{F}\colon \mathcal{C}^{\alpha}\times \mathcal{C}^{\gamma}\to \mathcal{C}^{\alpha +\beta +\gamma}$ such that
$$
F(f)\circ g = F^{\prime}(f)(f\circ g) + \Pi_{F}(f,g)
$$
for all $f\in \mathcal{C}^{\alpha}$ and all smooth $g$. More precisely, we have
$$
\| \Pi_{F}(f,g)\|_{\alpha +\beta +\gamma}\lesssim \| F\|_{C_{b}^{1 + \beta /\alpha}}(1 + \| f\|_{a}^{1 + \beta /\alpha})\| g\|_{\gamma}.
$$
If $F\in C_{b}^{2 + \beta /\alpha}$, then $\Pi_{F}$ is locally Lipschitz continuous:
$$
\| \Pi_{F}(f,g) - \Pi_{F}(u,v)\|_{\alpha +\beta +\gamma}\lesssim \| F\|_{C_{b}^{2 + \beta /\alpha}}(1 + (\| f\|_{\alpha} + \| u\|_{\alpha})^{1 + \beta /\alpha} + \| v\|_{\gamma})(\| f - u\|_{\alpha} + \| g - v\|_{\gamma}). 
$$

\end{lemma}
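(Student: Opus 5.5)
This is the paralinearization lemma of Gubinelli--Imkeller--Perkowski. The plan is to Taylor-expand $F$ around the low-frequency part of $f$ inside each resonant block. Throughout, $F\in C_b^{1+\beta/\alpha}$ means that $F'$ is $\beta/\alpha$-Hölder.

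\textbf{Step 1 (definition of the remainder).} Define
\[
\Pi_F(f,g):=F(f)\circ g-F'(f)(f\circ g).
\]
The claim is that this admits an expression making sense for all $g\in\mathcal C^\gamma$. Write
\[
F(f)\circ g=\sum_{|i-j|\le1}\Delta_iF(f)\,\Delta_jg,
\qquad
F'(f)(f\circ g)=\sum_{|i-j|\le1}F'(f)\,\Delta_if\,\Delta_jg .
\]
Subtracting, each summand becomes $\bigl(\Delta_iF(f)-F'(f)\Delta_if\bigr)\Delta_jg$. Since $\Delta_i$ has kernel $K_i$ with $\int K_i=0$ for $i\ge0$,
\[
\Delta_iF(f)(x)-F'(f(x))\Delta_if(x)=\int K_i(x-y)\bigl[F(f(y))-F(f(x))-F'(f(x))(f(y)-f(x))\bigr]\,dy .
\]
The block $i=-1$ is harmless. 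The mean-value theorem with the $\beta/\alpha$-Hölder continuity of $F'$ bounds the bracket by
\[
\|F\|_{C_b^{1+\beta/\alpha}}|f(y)-f(x)|^{1+\beta/\alpha}\lesssim \|F\|\,\|f\|_\alpha^{1+\beta/\alpha}|x-y|^{\alpha+\beta}.
\]
Integrating against $|K_i|$, which carries the weight $|x-y|^{\alpha+\beta}\sim2^{-i(\alpha+\beta)}$, gives the pointwise bound
\[
R_i:=\|\Delta_iF(f)-F'(f)\Delta_if\|_{L^\infty}\lesssim 2^{-i(\alpha+\beta)}\,\|F\|_{C_b^{1+\beta/\alpha}}(1+\|f\|_\alpha^{1+\beta/\alpha}).
\]

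\textbf{Step 2 (main obstacle: Besov regularity).} A pointwise bound on each summand is not a Besov bound. The summands $R_i\Delta_jg$ are not spectrally localized, because $F'(f)$ carries all frequencies. I would therefore not sum them directly. Instead, split
\[
F'(f)(f\circ g)=F'(f)\prec(f\circ g)+F'(f)\circ(f\circ g)+F'(f)\succ(f\circ g).
\]
Since $\alpha+\gamma<0$, we have $f\circ g\in\mathcal C^{\alpha+\gamma}$ only when $\alpha+\gamma>0$, so the resonance $f\circ g$ itself is ill-defined for general $g$ and exists only for smooth $g$. The bound must therefore be uniform in $g$ and depend only on $\|g\|_\gamma$.

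The cleaner route, as in the original paper, is to estimate $\Delta_k\Pi_F(f,g)$ directly:
\[
\Delta_k\Pi_F=\sum_{|i-j|\le1}\Delta_k\bigl[(\Delta_iF(f)-F'(f)\Delta_if)\Delta_jg\bigr].
\]
For $i\gtrsim k$, each term is bounded by $R_i2^{-j\gamma}\|g\|_\gamma\lesssim 2^{-i(\alpha+\beta+\gamma)}$. Summing over $i\gtrsim k$ gives $2^{-k(\alpha+\beta+\gamma)}$, using $\alpha+\beta+\gamma>0$.

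For $i\ll k$, the factor $\Delta_jg$ has frequency $\sim2^i\ll2^k$, so the $2^k$-frequency must come from $F'(f)$. Here I rewrite $F'(f)=S_{i-1}F'(f)+(F'(f)-S_{i-1}F'(f))$. The first part is spectrally localized at scale $\lesssim2^i$ and is killed by $\Delta_k$. The second part contributes only high frequencies of $F'(f)$, which lies in $\mathcal C^{\beta}$ because $F'$ is $\beta/\alpha$-Hölder and $f\in\mathcal C^\alpha$. Hence $\Delta_k$ of it is $\lesssim2^{-k\beta}$, multiplied by $\|\Delta_if\Delta_jg\|\lesssim2^{-i(\alpha+\gamma)}$.

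The same split applies to $\Delta_iF(f)\Delta_jg$, whose product has frequency $\lesssim2^i$, so it vanishes under $\Delta_k$ for $i\ll k$. Summing over $i<k$ gives
\[
2^{-k\beta}\sum_{i<k}2^{-i(\alpha+\gamma)}\lesssim2^{-k(\alpha+\beta+\gamma)}.
\]
This step uses $\alpha+\gamma<0$, so the geometric sum is dominated by its top term. This yields the stated bound with the factor $(1+\|f\|_\alpha^{1+\beta/\alpha})$.

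\textbf{Step 3 (extension and Lipschitz bound).} The bound depends only on $\|g\|_\gamma$, so $\Pi_F$ extends by density from smooth $g$, or more precisely by continuity in $\mathcal C^{\gamma-}$ norms. For the Lipschitz estimate, assume $F\in C_b^{2+\beta/\alpha}$. Write the Taylor remainder difference for $(f,g)$ versus $(u,v)$ as an integral in $\tau$ of derivatives along $u+\tau(f-u)$. Then repeat Steps 1–2 with one extra derivative of $F$, giving a factor $\|f-u\|_\alpha$. Linearity in $g$ handles the $g-v$ term.
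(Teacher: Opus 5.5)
Your argument is correct, but it is not the route of the source. Contrary to your remark in Step 2, \cite{gubinelli2015paracontrolled} does not estimate $\Delta_k\Pi_F$ directly. The paper gives no proof and defers to that reference, where the lemma comes from composing two results already stated here; this is the same mechanism the paper uses for $\mathcal N(\vartheta^\varepsilon+w^\varepsilon)\circ\partial_x\vartheta^\varepsilon$ in Section~\ref{sec;Rough Burgers}. By Lemma~\ref{le;paralinearization}, $F(f)=F'(f)\prec f+R_F(f)$, so for smooth $g$
\[
F(f)\circ g=F'(f)(f\circ g)+C(F'(f),f,g)+R_F(f)\circ g ,
\]
and one sets $\Pi_F(f,g):=C(F'(f),f,g)+R_F(f)\circ g$.

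The commutator lemma (Lemma~\ref{le;para-controlled}) is applied with regularities $(\beta,\alpha,\gamma)$. Its hypotheses then read $\alpha+\beta+\gamma>0$ and $\alpha+\gamma<0$, which are exactly those of the statement. The term $R_F(f)\circ g$ is a resonant product of $\mathcal C^{\alpha+\beta}$ and $\mathcal C^{\gamma}$. This formula is defined for every $g\in\mathcal C^\gamma$. The Lipschitz bound is inherited from three facts: the Lipschitz bound for $R_F$, the Lipschitz continuity of $f\mapsto F'(f)$ in $\mathcal C^\beta$, and the trilinearity of $C$.

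You instead inline both ingredients into one estimate of $\Delta_k\Pi_F$. Your regime $i\gtrsim k$ (mean-zero kernel plus Taylor remainder) is the mechanism behind paralinearization. Your regime $i\ll k$ (only the $2^k$-block of $F'(f)\in\mathcal C^\beta$ survives $\Delta_k$) is the mechanism behind the commutator lemma. Your route gives a self-contained proof that shows where each hypothesis enters, together with the explicit representation $\Pi_F(f,g)=\sum_{|i-j|\le 1}(\Delta_iF(f)-F'(f)\Delta_if)\Delta_jg$. The modular route is shorter and its Lipschitz statement comes for free. Yours must redo the Lipschitz part by hand, and Step 3 only sketches it. You need $[F'']_{\beta/\alpha}$ to control the $\tau$-derivative of the Taylor remainder, and a $\mathcal C^\beta$-Lipschitz bound for $F'(f)$ in the low-$i$ regime.

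Two corrections to Step 3.

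First, smooth functions are not dense in $\mathcal C^\gamma=B^\gamma_{\infty,\infty}$, so ``extends by density'' is wrong as stated. You do not need it. The series above converges absolutely in $L^\infty$ for every $g\in\mathcal C^\gamma$, since by Step 1 its terms are $O(2^{-i(\alpha+\beta+\gamma)})$. Your dyadic bounds then apply to it verbatim.

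Second, what your argument actually yields is
\[
\|\Pi_F(f,g)-\Pi_F(u,v)\|_{\alpha+\beta+\gamma}\lesssim\|F\|_{C_b^{2+\beta/\alpha}}(1+\|f\|_\alpha+\|u\|_\alpha)^{1+\beta/\alpha}\bigl(\|v\|_\gamma\|f-u\|_\alpha+\|g-v\|_\gamma\bigr),
\]
with $\|v\|_\gamma$ entering multiplicatively. This is the right form. The displayed inequality, with $\|v\|_\gamma$ added inside the first factor and a constant independent of $f,u,g,v$, cannot hold:
\begin{itemize}
\item Since $\Pi_F$ is linear in $g$, take $u=0$ and $F(0)=0$, so that $\Pi_F(0,v)=0$. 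Set $g=v$, replace $v$ by $\mu v$ and let $\mu\to\infty$. The displayed inequality would then give $\|\Pi_F(f,v)\|_{\alpha+\beta+\gamma}\lesssim\|f\|_\alpha\|v\|_\gamma$.
\item Now take $F(x)=x^2/2$ near $0$, $f=\sin(Nx)$ and $v=N^{-\gamma}\cos(2Nx)$, with $N$ chosen so that each frequency lies in a single dyadic block. A direct computation shows that $\Pi_F(f,v)$ contains the component $-\tfrac12N^{-\gamma}\cos(2Nx)$.
\item Hence $\|\Pi_F(f,v)\|_{\alpha+\beta+\gamma}\gtrsim N^{\alpha+\beta}$, while $\|f\|_\alpha\sim N^{\alpha}$ and $\|v\|_\gamma\sim 1$. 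This contradicts the previous bound as $N\to\infty$.
\end{itemize}
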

\begin{lemma}[\cite{gubinelli2015paracontrolled,bahouri2011fourier}]\label{le;paralinearization}
    Let $\alpha \in (0,1)$, $\beta \in (0,\alpha]$, and let $F \in C_{b}^{1 + \beta /\alpha}$. There exists a locally bounded map $R_{F}: \mathcal{C}^{\alpha} \to \mathcal{C}^{\alpha +\beta}$ such that
$$
F(f) = F^{\prime}(f)\prec f + R_{F}(f)
$$
for all $f \in \mathcal{C}^{\alpha}$. More precisely, we have
$$
\| R_{F}(f)\|_{\alpha +\beta}\lesssim \| F\|_{C_{b}^{1 + \beta /\alpha}}(1 + \| f\|_{a}^{1 + \beta /\alpha}).
$$
If $F \in C_{b}^{2 + \beta /\alpha}$, then $R_{F}$ is locally Lipschitz continuous:
$$
\| R_{F}(f) - R_{F}(g)\|_{\alpha +\beta}\lesssim \| F\|_{C_{b}^{2 + \beta /\alpha}}(1 + \| f\|_{\alpha} + \| g\|_{\alpha})^{1 + \beta /\alpha}\| f - g\|_{\alpha}.
$$

\end{lemma}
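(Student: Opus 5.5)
We prove the lemma by Bony's paralinearization argument, working block by block. We define $R_F(f):=F(f)-F'(f)\prec f$ and show that $\|\Delta_q R_F(f)\|_{L^\infty}\lesssim 2^{-q(\alpha+\beta)}\|F\|_{C_b^{1+\beta/\alpha}}(1+\|f\|_{\alpha}^{1+\beta/\alpha})$ for every $q\geq -1$. The block $q=-1$ is trivial: $\|F(f)\|_{L^\infty}\le\|F\|_{C_b}$, and $\|F'(f)\prec f\|_{\mathcal{C}^\alpha}\lesssim \|F'\|_{L^\infty}\|f\|_\alpha$ by Lemma \ref{le;est;paraproduct}. So we fix $q\geq0$. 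We write $\Delta_q g(x)=\int K_q(x-y)g(y)\,\mathrm dy$, where $K_q=2^{q}K(2^q\cdot)$ has vanishing mean and $\int |K_q(z)||z|^{\gamma}\,\mathrm dz\lesssim 2^{-q\gamma}$ for every $\gamma\ge 0$.

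\textbf{Step 1 (Taylor remainder).} Since $K_q$ has mean zero, for each $x$ we can subtract the linearization of $F$ at $f(x)$:
\begin{align*}
\Delta_q F(f)(x)=\int K_q(x-y)\Bigl[F(f(y))-F(f(x))-F'(f(x))\bigl(f(y)-f(x)\bigr)\Bigr]\mathrm dy+F'(f(x))\,\Delta_q f(x).
\end{align*}
Because $F'$ is $(\beta/\alpha)$-Hölder, with $\beta/\alpha\le1$, the bracket is bounded by $\|F\|_{C_b^{1+\beta/\alpha}}|f(y)-f(x)|^{1+\beta/\alpha}$. This is at most $\|F\|_{C_b^{1+\beta/\alpha}}\|f\|_\alpha^{1+\beta/\alpha}|x-y|^{\alpha+\beta}$. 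Integrating against $|K_q|$ therefore gives the bound $2^{-q(\alpha+\beta)}$.

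\textbf{Step 2 (comparison with the paraproduct).} It remains to estimate $F'(f)\Delta_q f-\Delta_q(F'(f)\prec f)$. By the spectral support properties, only indices $j$ with $|j-q|\le N$ contribute, for some fixed $N$. Using $\Delta_q f=\sum_{|j-q|\le N}\Delta_q\Delta_j f$, the difference splits as
\begin{align*}
\sum_{|j-q|\le N}\Bigl(\bigl(F'(f)-S_{j-1}F'(f)\bigr)\Delta_q\Delta_j f+\bigl[S_{j-1}F'(f),\Delta_q\bigr]\Delta_j f\Bigr).
\end{align*}
Since $F'$ is $(\beta/\alpha)$-Hölder, $F'(f)\in\mathcal{C}^\beta$ with $\|F'(f)\|_\beta\lesssim\|F\|_{C_b^{1+\beta/\alpha}}(1+\|f\|_\alpha^{\beta/\alpha})$.
\begin{itemize}
\item[(i)] The first term is bounded by $2^{-j\beta}\cdot 2^{-j\alpha}$ times the product of the norms.
\item[(ii)] For the commutator we use the standard bound $\|[a,\Delta_q]b\|_{L^\infty}\lesssim 2^{-q}\|\nabla a\|_{L^\infty}\|b\|_{L^\infty}$, obtained from the mean value theorem in the kernel representation. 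Combined with $\|\nabla S_{j-1}F'(f)\|_{L^\infty}\lesssim 2^{j(1-\beta)}\|F'(f)\|_\beta$, which uses $\beta<1$, it is bounded by $2^{-q}2^{q(1-\beta)}2^{-q\alpha}=2^{-q(\alpha+\beta)}$.
\end{itemize}
Combining Steps 1 and 2 yields $\|R_F(f)\|_{\alpha+\beta}\lesssim \|F\|_{C_b^{1+\beta/\alpha}}(1+\|f\|_\alpha^{1+\beta/\alpha})$, so $R_F$ is locally bounded.

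\textbf{Step 3 (local Lipschitz continuity), the main obstacle.} We rerun Steps 1--2 for the difference $R_F(f)-R_F(g)$ with $F\in C_b^{2+\beta/\alpha}$.
\begin{itemize}
\item[(a)] In Step 1 we must control the difference of the two Taylor remainders by $|x-y|^{\alpha+\beta}(1+\|f\|_\alpha+\|g\|_\alpha)^{1+\beta/\alpha}\|f-g\|_\alpha$. We write each remainder as $\int_0^1\bigl(F'(f(x)+\tau(f(y)-f(x)))-F'(f(x))\bigr)\mathrm d\tau\,\bigl(f(y)-f(x)\bigr)$ and interpolate between $f$ and $g$ along $f_\lambda=g+\lambda(f-g)$. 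Differentiating in $\lambda$ then produces terms involving $F''$ at the $(\beta/\alpha)$-Hölder level, multiplied by increments of $f-g$ and of $f_\lambda$. The exponents must be balanced so that exactly one factor $\|f-g\|_\alpha$ appears. This is the delicate bookkeeping, and it is where the extra derivative of $F$ is consumed.
\item[(b)] In Step 2 the argument is bilinear in $(F'(f),f)$. Here it suffices to use $\|F'(f)-F'(g)\|_\beta\lesssim\|F\|_{C_b^{2+\beta/\alpha}}(1+\|f\|_\alpha+\|g\|_\alpha)^{\beta/\alpha}\|f-g\|_\alpha$, which follows from the same interpolation in $\lambda$.
\end{itemize}
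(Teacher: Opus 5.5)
Your Steps 1--2 are correct and complete. They are the standard Bony-type kernel argument behind this lemma, which the paper does not prove but imports from \cite{gubinelli2015paracontrolled,bahouri2011fourier}.

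The one structural choice you make is to estimate $\Delta_q R_F(f)$ directly, which is why you need the commutator $[S_{j-1}F'(f),\Delta_q]\Delta_jf$. An equivalent route writes $R_F(f)=\sum_{i\geq-1}u_i$ with $u_i:=\Delta_iF(f)-S_{i-1}F'(f)\,\Delta_if$. Your Step 1, together with the analogue of your term (i), gives $\|u_i\|_{L^\infty}\lesssim2^{-i(\alpha+\beta)}$. Since $\mathcal F u_i$ is supported in a ball of radius $\lesssim2^i$ and $\alpha+\beta>0$, the series lies in $\mathcal C^{\alpha+\beta}$ by the usual summation lemma for ball-localized pieces. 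That route trades your commutator bound for the summation lemma; both work.

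Your Step 3(a) is only announced, not carried out. It does close directly, with no delicate balancing. Put $h=f-g$, $f_\lambda=g+\lambda h$, $a_\lambda=f_\lambda(x)$, $d_\lambda=f_\lambda(y)-f_\lambda(x)$, and $T_f(x,y)=F(f(y))-F(f(x))-F'(f(x))(f(y)-f(x))$. Differentiating your $\tau$-representation in $\lambda$ gives
\begin{align*}
\partial_\lambda T_{f_\lambda}(x,y)
={}&h(x)\,d_\lambda\int_0^1\bigl(F''(a_\lambda+\tau d_\lambda)-F''(a_\lambda)\bigr)\,\mathrm d\tau
+\bigl(h(y)-h(x)\bigr)\,d_\lambda\int_0^1\tau F''(a_\lambda+\tau d_\lambda)\,\mathrm d\tau\\
&+\bigl(h(y)-h(x)\bigr)\int_0^1\bigl(F'(a_\lambda+\tau d_\lambda)-F'(a_\lambda)\bigr)\,\mathrm d\tau .
\end{align*}
The first term is at most $\|F\|_{C_b^{2+\beta/\alpha}}\|h\|_{L^\infty}\|f_\lambda\|_\alpha^{1+\beta/\alpha}|x-y|^{\alpha+\beta}$. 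The other two are at most $\|F''\|_{L^\infty}\|f_\lambda\|_\alpha\|h\|_\alpha|x-y|^{2\alpha}$. Integrating against $|K_q(x-y)|$ produces $2^{-q(\alpha+\beta)}+2^{-2q\alpha}\lesssim2^{-q(\alpha+\beta)}$ for $q\geq0$; this is where $\beta\leq\alpha$ is used.

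To finish, write the Step 2 expression as a bilinear form $B_q(F'(f),f)$ and split $B_q(F'(f),f)-B_q(F'(g),g)=B_q(F'(f)-F'(g),f)+B_q(F'(g),f-g)$. Your bound in 3(b) then yields the stated Lipschitz estimate.
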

\begin{lemma}[\cite{gubinelli2015paracontrolled}]\label{le;para-controlled}
     Assume that $\alpha \in (0,1)$ and $\beta ,\gamma \in \mathbb{R}$ are such that $\alpha +\beta +\gamma >0$ and $\beta +\gamma < 0$. Then there exists a bounded trilinear operator $C\in \mathcal{L}^{3}(\mathcal{C}^{\alpha}\times \mathcal{C}^{\beta}\times \mathcal{C}^{\alpha},\mathcal{C}^{\alpha +\beta +\gamma})$ such that
$$
C(f,g,h) = ((f\prec g)\circ h) - f(g\circ h)
$$
whenever $f,g,h\in \mathcal{S}$.
\end{lemma}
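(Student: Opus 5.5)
We define $C$ first on Schwartz functions, prove the trilinear bound $\|C(f,g,h)\|_{\mathcal{C}^{\alpha+\beta+\gamma}}\lesssim\|f\|_{\mathcal{C}^\alpha}\|g\|_{\mathcal{C}^\beta}\|h\|_{\mathcal{C}^\gamma}$, and then extend by continuity. This follows the standard argument of \cite{gubinelli2015paracontrolled}. Here $h$ is understood to lie in $\mathcal{C}^\gamma$, which is the only reading compatible with the target regularity $\alpha+\beta+\gamma$. One subtlety is that $\mathcal{S}$ is not dense in the Hölder--Besov spaces. The extension is therefore done either in the closure of smooth functions, or by proving the bound for $\mathcal{C}^{\alpha'}\times\mathcal{C}^{\beta'}\times\mathcal{C}^{\gamma'}$ with slightly smaller exponents (still satisfying the hypotheses) and using the usual approximation $S_N f\to f$ there.

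\textbf{Decomposition.} We expand both products into dyadic blocks:
\begin{equation*}
C(f,g,h)=\sum_{|i-j|\le1}\Bigl(\Delta_i(f\prec g)\,\Delta_j h-f\,\Delta_i g\,\Delta_j h\Bigr),
\qquad
\Delta_i(f\prec g)=\sum_{k:\,|k-i|\lesssim 1}\Delta_i\bigl(S_{k-1}f\,\Delta_k g\bigr),
\end{equation*}
where the restriction to $k\sim i$ comes from the annular Fourier support of $S_{k-1}f\,\Delta_k g$. We then split each summand into two pieces:
\begin{align*}
&\text{(I)}\quad \bigl[\Delta_i(S_{k-1}f\,\Delta_k g)-S_{k-1}f\,\Delta_i\Delta_k g\bigr]\Delta_j h,\\
&\text{(II)}\quad (S_{k-1}f-f)\,\Delta_i\Delta_k g\,\Delta_j h,
\end{align*}
using $\sum_k \Delta_i\Delta_k g=\Delta_i g$.

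\textbf{Term (I).} This is a commutator between a Littlewood--Paley block and multiplication by a Hölder function. Write $\Delta_i$ as convolution with the kernel $2^{i}\check\rho(2^{i}\cdot)$ and apply a first-order Taylor (Hölder) expansion of $S_{k-1}f$. Since $\alpha<1$, this gives
\begin{equation*}
\bigl\|\Delta_i(S_{k-1}f\,\Delta_k g)-S_{k-1}f\,\Delta_i\Delta_k g\bigr\|_{L^\infty}\lesssim 2^{-i\alpha}\|f\|_{\alpha}\,2^{-k\beta}\|g\|_\beta .
\end{equation*}

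\textbf{Term (II).} We use $\|f-S_{k-1}f\|_{L^\infty}\lesssim 2^{-k\alpha}\|f\|_\alpha$. To keep track of Fourier supports, we write $f-S_{k-1}f=\sum_{m\ge k-1}\Delta_m f$. Each resulting product then has frequency support in a ball of radius $\lesssim 2^{\max(m,i)}$.

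\textbf{Summation.} In both cases, the $(i,j,k,m)$-summand is bounded in $L^\infty$ by $2^{-\max(i,m)(\alpha+\beta+\gamma)}$ times the product of the norms, and its spectrum lies in a ball (not an annulus) of radius $\sim 2^{\max(i,m)}$. Applying $\Delta_q$ therefore kills all terms with $\max(i,m)\lesssim q$ small. The remaining sum is
\begin{equation*}
\sum_{\max(i,m)\gtrsim q}2^{-\max(i,m)(\alpha+\beta+\gamma)}\lesssim 2^{-q(\alpha+\beta+\gamma)}.
\end{equation*}
This geometric sum converges precisely because $\alpha+\beta+\gamma>0$, and the bound is exactly the $\mathcal{C}^{\alpha+\beta+\gamma}$ estimate.

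The hypothesis $\beta+\gamma<0$ enters only to say that $g\circ h$ is a genuine distribution in $\mathcal{C}^{\beta+\gamma}$. The product $f(g\circ h)$ is then a singular product, meaningful only because of the cancellation established above; no separate argument beyond the estimate is needed for it. If instead $\beta+\gamma>0$, the statement would follow trivially from Lemma~\ref{le;est;paraproduct}.

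The main obstacle is the careful bookkeeping in the summation step. Since $\Delta_i(f\prec g)\,\Delta_j h$ is supported in a ball rather than an annulus, the positivity $\alpha+\beta+\gamma>0$ must be used to sum the tail. The tail of $f$ in term (II) must also be split dyadically so that the support argument applies. The commutator bound in term (I) itself is routine once $\alpha\in(0,1)$ is used.
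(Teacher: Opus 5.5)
Your decomposition into the commutator term (I) and the tail term (II) is the standard argument of \cite{gubinelli2015paracontrolled}, which the paper only cites. Your bound for (I) and your reading of the third slot as $\mathcal{C}^\gamma$ are also correct. The gap is in the summation for term (II), and it is tied to your claim that $\beta+\gamma<0$ only serves to make $g\circ h$ meaningful.

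The summand $\Delta_m f\,\Delta_i\Delta_k g\,\Delta_j h$ has $k\sim i\sim j$ and $m\geq k-1$, and its natural bound is $2^{-m\alpha}2^{-i(\beta+\gamma)}$. This is dominated by $2^{-\max(i,m)(\alpha+\beta+\gamma)}\sim 2^{-m(\alpha+\beta+\gamma)}$ for all $m-i\geq -O(1)$ only if $\beta+\gamma\le 0$. So your uniform bound already uses the hypothesis you say is irrelevant.

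Even granting that bound, the sum runs over two free indices. For each $m\gtrsim q$ there are $\sim m$ admissible $i\in[-1,m+O(1)]$. Your displayed geometric sum is therefore really $\sum_{m\gtrsim q}(m+2)\,2^{-m(\alpha+\beta+\gamma)}\sim q\,2^{-q(\alpha+\beta+\gamma)}$, which only gives $\mathcal{C}^{\alpha+\beta+\gamma-\varepsilon}$.

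The correct step keeps the sharp bound and sums in $i$ first:
\[
\sum_{m\gtrsim q}2^{-m\alpha}\sum_{-1\le i\lesssim m}2^{-i(\beta+\gamma)}\lesssim\sum_{m\gtrsim q}2^{-m(\alpha+\beta+\gamma)}\lesssim 2^{-q(\alpha+\beta+\gamma)}.
\]
The inner geometric sum is controlled by its top term precisely because $\beta+\gamma<0$, and the outer sum converges because $\alpha+\beta+\gamma>0$. The high-$m$, low-$i$ part of (II) is essentially $f\succ(g\circ h)$. That term lies in $\mathcal{C}^{\alpha+\beta+\gamma}$ only when $g\circ h$ has negative regularity, which is item 2 of Lemma~\ref{le;est;paraproduct}.

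For the same reason your closing remark is false: the case $\beta+\gamma>0$ is not trivial, the statement actually fails there. Take $g=h=1$, which lies in $\mathcal{C}^s$ for every $s$. With the paper's convention, $f\prec 1=S_{-2}f=0$ and $1\circ 1=1$, so $C(f,1,1)=-f$. This is only in $\mathcal{C}^\alpha$, not in $\mathcal{C}^{\alpha+\beta+\gamma}$ for large $\beta,\gamma$.

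Once the summation is repaired as above, the proof goes through. The extension step can be handled, for instance, by your smaller-exponent approximation combined with lower semicontinuity of the $\mathcal{C}^{\alpha+\beta+\gamma}$ norm under the approximation.
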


\section{Analysis of time-fractional heat flow}\label{sec;analysis}
In this section, we establish several estimates for the time-fractional heat flows \(U(t)\) and \(V(t)\) introduced in Lemma \ref{le;duhamel}. As a simple motivating example, consider first the classical case \(\nu=1\). Since \(-D^2=\Delta\), we have
\begin{align*}
     U(t)=V(t)=e^{t\Delta}
    =\psi(t^{1/2}D),
\end{align*}
where
$$
    \psi(x)=e^{-|x|^{2\sigma}}.
$$
More precisely, by the contour integral representation, \(\psi\) can be written as
$$
    \psi(x)
    =
    \frac{1}{2\pi i}
    \int_{\Gamma_{\theta,\delta}}
    \frac{e^\eta}{\eta+|x|^{2\sigma}}\,\mathrm d\eta,
$$
where the contour \(C=\Gamma_{\theta,\delta}\) is given by \eqref{eq:hankel-contour} with $\delta>0$, $\theta\in(\pi/2,\pi)$.
This representation also makes the rapid decay of the multiplier transparent. Indeed, since \(e^\eta\eta^n\) is entire for $n\in \m N$, Cauchy's theorem, applied after closing the truncated contour in the left half-plane, together with the exponential decay of \(e^{\operatorname{Re}\eta}\), yields
$$
    \int_{\Gamma_{\theta,\delta}}e^\eta\eta^n\,\mathrm d\eta=0.
$$
Consequently, for every \(N\in\mathbb N\) and \(\lambda>0\), a direct algebraic computation gives
\begin{align*}
\lambda^N
\int_C e^\eta\frac{1}{\eta+\lambda}\mathrm d\eta
&=
\sum_{n=0}^{N-1}
(-1)^n\lambda^{N-1-n}
\underbrace{\int_C e^\eta\eta^n\mathrm d\eta}_{=0}
+
(-1)^N
\int_C e^\eta
\frac{\eta^N}{\eta+\lambda}\mathrm d\eta .
\end{align*}
The last integral is uniformly bounded in \(\lambda>0\), owing to the exponential decay of \(e^{\operatorname{Re}\eta}\) along the two rays of the contour, together with the fact that the denominator \(\eta+\lambda\) remains uniformly separated from zero on \(C\). This yields the expected rapid decay in \(\lambda\), and hence the corresponding smoothing estimates for the classical heat flow.
\par We now turn to the genuinely fractional case \(\nu\in(0,1)\).
Let $\eta=re^{i\zeta}$. Under Assumption \ref{assump;contour}, we have $\cos(\nu\zeta)>\cos(\nu\theta)>-1$. Therefore, for any $\epsilon\in[0,1]$ and $y>0$, we have
\begin{align*}
| \frac{1}{\eta^\nu +y}|=\frac{1}{\sqrt{r^{2\nu}+2r^\nu y\cos(\nu\zeta)+y^2}}\lesssim_\theta \frac{1}{\sqrt{r^{2\nu}+y^2}}\lesssim \frac{1}{r^{\nu\epsilon}y^{1-\epsilon}}.
\end{align*}

Similarly, for $k\geq1$, we also have 
\begin{lemma}\label{le;bound integral}
    Let $\nu\in(0,1]$, $\theta\in(\pi/2,\pi)$, and $\delta>0$.
    Then, for any $k\in\m N$, $\alpha\in[0,k]$, $\beta\in\m R$,
    and $\lambda>0$, we have
    \begin{align*}
        \left|
        \int_{\Gamma_{\theta,\delta}}
        e^\eta
        \frac{\lambda^\alpha\eta^\beta}
        {(\eta^\nu+\lambda)^k}
        \mathrm d\eta
        \right|
        \lesssim 1,
    \end{align*}
    where the implicit constant is independent of $\lambda$.

    \begin{proof}
        We decompose $\Gamma_{\theta,\delta}$ into the two rays
        \begin{align*}
            \Gamma_{\theta,\delta}^1
            =
            \{re^{\pm i\theta}:r\in[\delta,\infty)\}
        \end{align*}
        and the circular arc
        \begin{align*}
            \Gamma_{\theta,\delta}^2
            =
            \{\delta e^{i\zeta}:\zeta\in[-\theta,\theta]\}.
        \end{align*}
        Since $\nu\in(0,1]$ and $\theta<\pi$, we have
        $\nu\theta<\pi$. Hence, for $\eta=re^{i\zeta}$ with
        $|\zeta|\leq\theta$,
        \begin{align*}
            |\eta^\nu+\lambda|^2
            &=
            r^{2\nu}
            +
            2r^\nu\lambda\cos(\nu\zeta)
            +
            \lambda^2\\
            &\gtrsim
            r^{2\nu}+\lambda^2.
        \end{align*}
        Therefore, for every $\epsilon\in[0,1]$,
        \begin{align}\label{eq;denominator;bound}
            |\eta^\nu+\lambda|
            \gtrsim
            r^{\nu\epsilon}\lambda^{1-\epsilon}.
        \end{align}

        Suppose first that $k\geq1$. Choose
        \begin{align*}
            \epsilon
            =
            1-\frac{\alpha}{k}
            \in[0,1].
        \end{align*}
        By \eqref{eq;denominator;bound},
        \begin{align*}
            \frac{\lambda^\alpha|\eta|^\beta}
            {|\eta^\nu+\lambda|^k}
            &\lesssim
            \lambda^{\alpha-k(1-\epsilon)}
            r^{\beta-k\nu\epsilon}\\
            &=
            r^{\beta-k\nu\epsilon}.
        \end{align*}

        On $\Gamma_{\theta,\delta}^1$, we have
        $\Re(\eta)=r\cos\theta$ with $\cos\theta<0$. Hence
        \begin{align*}
            \left|
            \int_{\Gamma_{\theta,\delta}^1}
            e^\eta
            \frac{\lambda^\alpha\eta^\beta}
            {(\eta^\nu+\lambda)^k}
            \mathrm d\eta
            \right|
            &\lesssim
            \int_\delta^\infty
            e^{r\cos\theta}
            r^{\beta-k\nu\epsilon}
            \mathrm dr\\
            &\lesssim 1.
        \end{align*}

        On $\Gamma_{\theta,\delta}^2$, since $|\eta|=\delta$,
        we similarly obtain
        \begin{align*}
            \left|
            \int_{\Gamma_{\theta,\delta}^2}
            e^\eta
            \frac{\lambda^\alpha\eta^\beta}
            {(\eta^\nu+\lambda)^k}
            \mathrm d\eta
            \right|
            &\lesssim
            \delta^{\beta-k\nu\epsilon+1}
            \int_{-\theta}^{\theta}
            e^{\delta\cos\zeta}
            \mathrm d\zeta\\
            &\lesssim 1.
        \end{align*}
        Combining the estimates on
        $\Gamma_{\theta,\delta}^1$ and
        $\Gamma_{\theta,\delta}^2$ proves the assertion for
        $k\geq1$.

        If $k=0$, then necessarily $\alpha=0$, and
        \begin{align*}
            \left|
            \int_{\Gamma_{\theta,\delta}}
            e^\eta\eta^\beta\mathrm d\eta
            \right|
            \lesssim 1
        \end{align*}
        follows directly from the exponential decay on the rays
        and the fact that $|\eta|=\delta$ on the circular arc.
    \end{proof}
\end{lemma}
Using above properties of Integral, we can calculate the decay rate
\begin{lemma}\label{le;decay;varphi;phi}
    Let $\nu\in(0,1)$, $\sigma\in(0,1]$, $m\in\m N$, $\delta>0$, $\theta\in(\pi/2,\pi)$ and define, for $x\in\m R^d$,
    \begin{align*}
        \varphi_{\theta,\delta,m}(x)
        &=
        \frac{1}{2\pi i}
        \int_{\Gamma_{\theta,\delta}}
        e^{\eta}\eta^m
        \frac{\eta^{\nu-1}}{\eta^\nu+|x|^{2\sigma}}
        \mathrm d\eta,\\
        \phi_{\theta,\delta,m}(x)
        &=
        \frac{1}{2\pi i}
        \int_{\Gamma_{\theta,\delta}}
        e^{\eta}\eta^m
        \frac{1}{\eta^\nu+|x|^{2\sigma}}
        \mathrm d\eta,
    \end{align*}
    where $\Gamma_{\theta,\delta}$ is defined in
    \eqref{eq:hankel-contour}. Then
    $\varphi_{\theta,\delta,m}$ and $\phi_{\theta,\delta,m}$ are
    smooth functions on $\m R^d\backslash\{0\}$. Moreover, for every $k\in\m N$ and $|x|\geq1$,
    \begin{align}
        |\Delta^k\varphi_{\theta,\delta,m}(x)|
        &\lesssim
        (1+|x|^{2\sigma})^{-k/\sigma-1},
        \label{eq;decay;varphi}\\
        |\Delta^k\phi_{\theta,\delta,m}(x)|
        &\lesssim
        (1+|x|^{2\sigma})^{-k/\sigma-2}.
        \label{eq;decay;phi}
    \end{align}
    The implicit constants may depend on
    $\nu,\sigma,\theta,\delta,m,k$ and $d$, but not on $x$.

    If $\nu=1$, then for every $k,M\in\m N$ and $|x|\geq1$,
    \begin{align}
        |\Delta^k\varphi_{\theta,\delta,m}(x)|
        +
        |\Delta^k\phi_{\theta,\delta,m}(x)|
        \lesssim
        (1+|x|^{2\sigma})^{-M}.
        \label{eq;decay;nu;one}
    \end{align}
\end{lemma}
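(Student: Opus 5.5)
Both functions are radial: $\varphi_{\theta,\delta,m}(x)=\Phi(|x|^{2\sigma})$ and $\phi_{\theta,\delta,m}(x)=\Psi(|x|^{2\sigma})$, where
\[
\Phi(\lambda)=\frac{1}{2\pi i}\int_{\Gamma_{\theta,\delta}}e^\eta\frac{\eta^{m+\nu-1}}{\eta^\nu+\lambda}\,\mathrm d\eta,
\qquad
\Psi(\lambda)=\frac{1}{2\pi i}\int_{\Gamma_{\theta,\delta}}e^\eta\frac{\eta^{m}}{\eta^\nu+\lambda}\,\mathrm d\eta .
\]
The plan is to first establish decay of $\Phi,\Psi$ and their $\lambda$-derivatives in $\lambda$, and then transfer this to $\Delta^k$ in $x$ by the chain rule.

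\textbf{Step 1: smoothness and derivatives in $\lambda$.} For $\lambda>0$ the denominator $\eta^\nu+\lambda$ is bounded away from zero on the contour. This follows from the bound \eqref{eq;denominator;bound} in the proof of Lemma \ref{le;bound integral}. Together with the exponential decay of $e^{\eta}$ on the rays, this allows differentiation under the integral sign:
\[
\partial_\lambda^j\Phi(\lambda)=\frac{(-1)^j j!}{2\pi i}\int_{\Gamma_{\theta,\delta}} e^\eta\frac{\eta^{m+\nu-1}}{(\eta^\nu+\lambda)^{j+1}}\,\mathrm d\eta ,
\]
and similarly for $\Psi$. Hence $\Phi,\Psi\in C^\infty(0,\infty)$. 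Since $x\mapsto|x|^{2\sigma}$ is smooth away from $0$, $\varphi$ and $\phi$ are smooth on $\m R^d\setminus\{0\}$.

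\textbf{Step 2: decay in $\lambda$ via the algebraic expansion.} I would mimic the $\nu=1$ computation preceding the lemma, using the identity
\[
\frac{1}{\eta^\nu+\lambda}=\frac1\lambda-\frac{\eta^\nu}{\lambda(\eta^\nu+\lambda)} .
\]
The term $\lambda^{-1}\int_{\Gamma_{\theta,\delta}} e^\eta\eta^{m+\nu-1}\,\mathrm d\eta$ equals $\lambda^{-1}$ times $2\pi i/\Gamma(1-\nu-m)$ by the Hankel formula. This is generically nonzero, which explains why $\varphi$ decays only like $\lambda^{-1}$.

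For $\Psi$ the leading term is $\lambda^{-1}\int e^\eta\eta^m\,\mathrm d\eta=0$, since the integrand is entire. Iterating the identity once more, the next term is $\lambda^{-2}\int e^\eta\eta^{m+\nu}\,\mathrm d\eta$, which is again nonzero in general. The remainders have the form $\lambda^{-N}\int e^\eta\eta^{\beta}\lambda^{0}(\eta^\nu+\lambda)^{-1}\,\mathrm d\eta$, and these are $O(\lambda^{-N})$ by Lemma \ref{le;bound integral} with $k=1$, $\alpha=0$ applied to $\lambda^{-N}$ times the bounded integral. This gives $|\Phi(\lambda)|\lesssim\lambda^{-1}$ and $|\Psi(\lambda)|\lesssim\lambda^{-2}$.

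For derivatives, I would apply Lemma \ref{le;bound integral} directly with $\alpha=j+1$ (respectively $j+2$ after one expansion step), which gives
\[
|\Phi^{(j)}(\lambda)|\lesssim\lambda^{-1-j},\qquad |\Psi^{(j)}(\lambda)|\lesssim\lambda^{-2-j}.
\]
For $\Psi$ with $j\ge1$ the cancellation of the leading term must be redone. I would apply the same expansion to $(\eta^\nu+\lambda)^{-j-1}$ via $\partial_\lambda$ of the expansion, i.e. differentiate the identity $\Psi=\lambda^{-2}c+O(\lambda^{-3})$, where the remainder is written as an explicit contour integral and Lemma \ref{le;bound integral} is applied to its derivatives.

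\textbf{Step 3: chain rule in $x$.} For $|x|\ge1$, the function $\Delta^k[\Phi(|x|^{2\sigma})]$ is a finite sum of terms $\Phi^{(j)}(|x|^{2\sigma})\,P_j(x)$, $1\le j\le 2k$ (plus $j=0$ if $k=0$), where $P_j$ is a product of derivatives of $|x|^{2\sigma}$ of total order $2k$. Thus
\[
|P_j(x)|\lesssim |x|^{2\sigma j-2k}.
\]
Each term is therefore bounded by $\lambda^{-1-j}|x|^{2\sigma j-2k}=|x|^{-2\sigma-2k}$, which is $(1+|x|^{2\sigma})^{-k/\sigma-1}$ for $|x|\ge1$. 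The same bookkeeping gives the bound for $\phi$ with exponent $-k/\sigma-2$.

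For $\nu=1$, the Hankel integrals $\int e^\eta\eta^{n}\,\mathrm d\eta$ all vanish since the integrands are entire. The expansion can then be iterated to any order $M$, giving $O(\lambda^{-M-j})$ and hence \eqref{eq;decay;nu;one}.

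\textbf{Main obstacle.} The main obstacle is Step 2 for derivatives of $\Psi$: exhibiting the extra cancellation uniformly in $j$. I would handle it by writing
\[
\Psi(\lambda)=\frac{c_1}{\lambda^{2}}+\frac{1}{\lambda^{2}}\int_{\Gamma_{\theta,\delta}} e^\eta\frac{\eta^{m+2\nu}}{\eta^\nu+\lambda}\,\mathrm d\eta
\]
exactly, and then differentiating each piece by the Leibniz rule, with Lemma \ref{le;bound integral} controlling every resulting term.
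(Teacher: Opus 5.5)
Your proposal is correct and follows essentially the paper's argument: reduce to $\lambda=|x|^{2\sigma}$, bound the $\lambda$-derivatives of the contour integrals by Lemma~\ref{le;bound integral}, gain the extra factor $\lambda^{-1}$ for $\phi$ from the cancellation $\int_{\Gamma_{\theta,\delta}}e^\eta\eta^m\,\mathrm d\eta=0$, and pass to $\Delta^k$ by the chain rule. The differences are only organizational. The paper redoes the cancellation on each derivative through the binomial expansion of $\lambda^{K}(\eta^\nu+\lambda)^{-K}$, whereas you do it once via the exact two-term identity for $\Psi$ and then differentiate by Leibniz. Your chain-rule sum over all $1\le j\le 2k$ is also more accurate than the paper's display \eqref{eq;multi-differential}, which lists only $j\ge k$; this is harmless, since every $j$ gives the same bound.
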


\begin{proof}
    Set $\lambda=|x|^{2\sigma}$ for $x\neq0$ and define
    \begin{align*}
        \widetilde\varphi_m(\lambda)
        &=
        \frac{1}{2\pi i}
        \int_{\Gamma_{\theta,\delta}}
        e^\eta
        \frac{\eta^{m+\nu-1}}{\eta^\nu+\lambda}
        \mathrm d\eta,\\
        \widetilde\phi_m(\lambda)
        &=
        \frac{1}{2\pi i}
        \int_{\Gamma_{\theta,\delta}}
        e^\eta
        \frac{\eta^m}{\eta^\nu+\lambda}
        \mathrm d\eta.
    \end{align*}
    Then
    \begin{align*}
        \varphi_{\theta,\delta,m}(x)
        =
        \widetilde\varphi_m(|x|^{2\sigma}),
        \qquad
        \phi_{\theta,\delta,m}(x)
        =
        \widetilde\phi_m(|x|^{2\sigma}).
    \end{align*}
    By differentiating under the integral sign,
    $\widetilde\varphi_m$ and $\widetilde\phi_m$ are smooth on
    $(0,\infty)$. Hence $\varphi_{\theta,\delta,m}$ and
    $\phi_{\theta,\delta,m}$ are smooth on
    $\m R^d\backslash\{0\}$. Moreover, for every $n\in\m N$,
    \begin{align*}
        |\widetilde\varphi_m^{(n)}(\lambda)|
        &\lesssim
        \left|
        \int_{\Gamma_{\theta,\delta}}
        e^\eta
        \frac{\eta^{m+\nu-1}}
        {(\eta^\nu+\lambda)^{n+1}}
        \mathrm d\eta
        \right|,\\
        |\widetilde\phi_m^{(n)}(\lambda)|
        &\lesssim
        \left|
        \int_{\Gamma_{\theta,\delta}}
        e^\eta
        \frac{\eta^m}
        {(\eta^\nu+\lambda)^{n+1}}
        \mathrm d\eta
        \right|.
    \end{align*}

    For a smooth function $h$ on $(0,\infty)$, a direct computation
    gives
    \begin{align}\label{eq;multi-differential}
        \left|
        \Delta^k\big(h(|x|^{2\sigma})\big)
        \right|
        \lesssim
        \sum_{l=0}^k
        \lambda^{l+k-k/\sigma}
        |h^{(k+l)}(\lambda)|.
    \end{align}
    Hence
    \begin{align*}
        |\Delta^k\varphi_{\theta,\delta,m}(x)|
        \lesssim
        \sum_{l=0}^k
        \lambda^{l+k-k/\sigma}
        \left|
        \int_{\Gamma_{\theta,\delta}}
        e^\eta
        \frac{\eta^{m+\nu-1}}
        {(\eta^\nu+\lambda)^{k+l+1}}
        \mathrm d\eta
        \right|\lesssim 1.
    \end{align*}
    For $\lambda\geq1$, multiplying by $\lambda^{k/\sigma+1}$ and applying
    Lemma \ref{le;bound integral}, we obtain
    \begin{align*}
        \lambda^{k/\sigma+1}
        |\Delta^k\varphi_{\theta,\delta,m}(x)|
        &\lesssim
        \sum_{l=0}^k
        \left|
        \int_{\Gamma_{\theta,\delta}}
        e^\eta
        \eta^{m+\nu-1}
        \frac{\lambda^{k+l+1}}
        {(\eta^\nu+\lambda)^{k+l+1}}
        \mathrm d\eta
        \right|\\
        &\lesssim 1.
    \end{align*}
    This proves \eqref{eq;decay;varphi}.

    We next consider $\phi_{\theta,\delta,m}$. By
    \eqref{eq;multi-differential},
    \begin{align*}
        |\Delta^k\phi_{\theta,\delta,m}(x)|
        \lesssim
        \sum_{l=0}^k
        \lambda^{l+k-k/\sigma}
        \left|
        \int_{\Gamma_{\theta,\delta}}
        e^\eta
        \frac{\eta^m}
        {(\eta^\nu+\lambda)^{k+l+1}}
        \mathrm d\eta
        \right|
        \lesssim 
        1.
    \end{align*}
    To obtain one additional power of $\lambda^{-1}$, we use the
    cancellation
    \begin{align}\label{eq;zero integral}
        \int_{\Gamma_{\theta,\delta}}e^\eta\eta^m\mathrm d\eta=0,
        \qquad m\in\m N.
    \end{align}
    Indeed,
    \begin{align*}
        \frac{\lambda^{k+l+1}}
        {(\eta^\nu+\lambda)^{k+l+1}}
        &=
        \left(
            1-\frac{\eta^\nu}{\eta^\nu+\lambda}
        \right)^{k+l+1}\\
        &=
        1+
        \sum_{r=1}^{k+l+1}
        C_{r,k,l}
        \frac{\eta^{\nu r}\lambda^{k+l+1-r}}
        {(\eta^\nu+\lambda)^{k+l+1}}.
    \end{align*}
    Therefore, by \eqref{eq;zero integral},
    \begin{align*}
        \lambda^{k+l+2}
        \int_{\Gamma_{\theta,\delta}}
        e^\eta
        \frac{\eta^m}
        {(\eta^\nu+\lambda)^{k+l+1}}
        \mathrm d\eta
        =
        \sum_{r=1}^{k+l+1}
        C_{r,k,l}
        \int_{\Gamma_{\theta,\delta}}
        e^\eta
        \frac{
            \eta^{m+\nu r}
            \lambda^{k+l+2-r}
        }
        {(\eta^\nu+\lambda)^{k+l+1}}
        \mathrm d\eta.
    \end{align*}
    Lemma \ref{le;bound integral} then yields
    \begin{align*}
        \lambda^{k/\sigma+2}
        |\Delta^k\phi_{\theta,\delta,m}(x)|
        \lesssim 1,
        \qquad
        \lambda\geq1.
    \end{align*}
    This proves \eqref{eq;decay;phi}.

    Finally, suppose that $\nu=1$. In this case,
    \begin{align*}
        \int_{\Gamma_{\theta,\delta}}
        e^\eta\eta^N\mathrm d\eta=0
        \qquad
        \text{for every }N\in\m N.
    \end{align*}
    Hence the preceding cancellation argument can be iterated
    arbitrarily many times. It follows that, for every $M\in\m N$,
    \begin{align*}
        |\Delta^k\varphi_{\theta,\delta,m}(x)|
        +
        |\Delta^k\phi_{\theta,\delta,m}(x)|
        \lesssim
        (1+|x|^{2\sigma})^{-M},
        \qquad |x|\geq1,
    \end{align*}
    which proves \eqref{eq;decay;nu;one}.
\end{proof}

\begin{lemma}\label{le;Fourier L1}
Let $\nu\in(0,1]$, $\sigma\in(0,1]$ and $m\in\mathbb N_0$. Let
$\varphi_{\theta,\delta,m}$ and $\phi_{\theta,\delta,m}$ be defined as in Lemma~\ref{le;decay;varphi;phi} and let $\Gamma_{\theta,\delta}$ be defined in \eqref{eq:hankel-contour} and $\vartheta,\delta$ satisfy \ref{assump;contour}. Then the distributional Fourier transforms \(\mathcal F\varphi_{\theta,\delta,m}\) and \(\mathcal F\phi_{\theta,\delta,m}\) are represented by functions in \(L^1(\mathbb R^d)\). In particular, under the canonical identification \(L^1(\mathbb R^d)\hookrightarrow\mathcal S'(\mathbb R^d)\),
\[
\mathcal F\varphi_{\theta,\delta,m},
\quad \mathcal F\phi_{\theta,\delta,m}
\in L^1(\mathbb R^d).
\]

\begin{proof}
We first consider the case $\nu\in(0,1)$. Since the contour integral is independent of the choice of the admissible contour $\Gamma_{\theta,\delta}$, we choose $\theta\in(\pi/2,\pi)$ sufficiently close to $\pi/2$ such that
\begin{equation}
    \nu\theta<\frac{\pi}{2}.
\end{equation}
Then, for every $\eta\in\Gamma_{\theta,\delta}$,
\begin{equation}\label{eq:Re-eta-nu}
    \Re(\eta^\nu)\gtrsim |\eta|^\nu>0.
\end{equation}
Hence
\begin{equation}
    \frac{1}{\eta^\nu+|x|^{2\sigma}}
    =
    \int_0^\infty e^{-s(\eta^\nu+|x|^{2\sigma})}\,\mathrm ds.
\end{equation}
Therefore,
\begin{equation}
    \varphi_{\theta,\delta,m}(x)
    =
    \frac{1}{2\pi i}
    \int_{\Gamma_{\theta,\delta}}
    e^\eta\eta^{m+\nu-1}
    \int_0^\infty
    e^{-s\eta^\nu}e^{-s|x|^{2\sigma}}
    \,\mathrm ds\,\mathrm d\eta.
\end{equation}

Let
\begin{equation}
    \mathfrak E_\sigma(x)=e^{-|x|^{2\sigma}},
    \qquad
    p_\sigma=\mathcal F\mathfrak E_\sigma.
\end{equation}
Since $0<\sigma\leq1$, $p_\sigma$ is, up to the normalization of the Fourier transform, the density of a symmetric $2\sigma$-stable random variable. In particular,
\begin{equation}\label{eq:stable-density-L1}
    p_\sigma\in L^1(\mathbb R^d).
\end{equation}
By scaling,
\begin{equation}\label{eq:stable-scaling}
    \mathcal F(e^{-s|\cdot|^{2\sigma}})(\xi)
    =
    s^{-d/(2\sigma)}
    p_\sigma\big(s^{-1/(2\sigma)}\xi\big).
\end{equation}
Moreover,
\begin{equation}\label{eq:stable-scaling-L1}
    \int_{\mathbb R^d}
    s^{-d/(2\sigma)}
    \left|
    p_\sigma\big(s^{-1/(2\sigma)}\xi\big)
    \right|
    \,\mathrm d\xi
    =
    \|p_\sigma\|_{L^1}.
\end{equation}

Define
\begin{align}\label{eq:def-K}
    K(\xi)
    :=
    \frac{1}{2\pi i}
    \int_{\Gamma_{\theta,\delta}}
    e^\eta\eta^{m+\nu-1}
    \int_0^\infty
    e^{-s\eta^\nu}
    s^{-d/(2\sigma)}
    p_\sigma\big(s^{-1/(2\sigma)}\xi\big)
    \,\mathrm ds\,\mathrm d\eta.
\end{align}
By Tonelli's theorem, \eqref{eq:stable-scaling-L1}, and
\eqref{eq:Re-eta-nu},
\begin{align}
    \|K\|_{L^1}
    &\lesssim
    \int_{\Gamma_{\theta,\delta}}
    e^{\Re\eta}|\eta|^{m+\nu-1}
    \int_0^\infty
    e^{-s\Re(\eta^\nu)}
    \int_{\mathbb R^d}
    s^{-d/(2\sigma)}
    \left|
    p_\sigma\big(s^{-1/(2\sigma)}\xi\big)
    \right|
    \,\mathrm d\xi\,\mathrm ds\,|\mathrm d\eta|
    \notag\\
    &\lesssim
    \int_{\Gamma_{\theta,\delta}}
    e^{\Re\eta}|\eta|^{m+\nu-1}
    \int_0^\infty
    e^{-s\Re(\eta^\nu)}
    \,\mathrm ds\,|\mathrm d\eta|
    \notag\\
    &=
    \int_{\Gamma_{\theta,\delta}}
    e^{\Re\eta}
    \frac{|\eta|^{m+\nu-1}}
    {\Re(\eta^\nu)}
    \,|\mathrm d\eta|
    \notag\\
    &\lesssim
    \int_{\Gamma_{\theta,\delta}}
    e^{\Re\eta}|\eta|^{m-1}
    \,|\mathrm d\eta|
    <\infty.
\end{align}
Thus
\begin{equation}\label{eq:K-L1}
    K\in L^1(\mathbb R^d).
\end{equation}

It remains to identify $K$ with
$\mathcal F\varphi_{\theta,\delta,m}$ in $\mathcal S'(\mathbb R^d)$.
For $\varepsilon>0$, define
\begin{equation}
    \varphi_{\theta,\delta,m}^{\varepsilon}(x)
    :=
    \frac{1}{2\pi i}
    \int_{\Gamma_{\theta,\delta}}
    e^\eta\eta^{m+\nu-1}
    \int_\varepsilon^\infty
    e^{-s\eta^\nu}e^{-s|x|^{2\sigma}}
    \,\mathrm ds\,\mathrm d\eta.
\end{equation}
For every \(x\in\mathbb R^d\), we have
\[
\varphi_{\theta,\delta,m}^{\varepsilon}(x)
\longrightarrow
\varphi_{\theta,\delta,m}(x)
\qquad\text{as }\varepsilon\downarrow0.
\]
Moreover,
\[
\sup_{\varepsilon>0}
\|\varphi_{\theta,\delta,m}^{\varepsilon}\|_{L^\infty}
\lesssim
\int_{\Gamma_{\theta,\delta}}
e^{\Re\eta}|\eta|^{m-1}\,|\mathrm d\eta|
<\infty.
\]
Hence, by the dominated convergence theorem, for every
\(\psi\in\mathcal S(\mathbb R^d)\),
\[
\langle\varphi_{\theta,\delta,m}^{\varepsilon},\psi\rangle
\longrightarrow
\langle\varphi_{\theta,\delta,m},\psi\rangle.
\]
Therefore,
\[
\varphi_{\theta,\delta,m}^{\varepsilon}
\longrightarrow
\varphi_{\theta,\delta,m}
\qquad\text{in }\mathcal S'(\mathbb R^d).
\]
By the continuity of the Fourier transform on \(\mathcal S'(\mathbb R^d)\), we also have
\[
\mathcal F\varphi_{\theta,\delta,m}^{\varepsilon}
\longrightarrow
\mathcal F\varphi_{\theta,\delta,m}
\qquad\text{in }\mathcal S'(\mathbb R^d).
\]
For each $\varepsilon>0$, the truncation makes
the relevant integrals absolutely convergent.
Thus, by Fubini's theorem and
\eqref{eq:stable-scaling},
\begin{align*}
    \mathcal F\varphi_{\theta,\delta,m}^{\varepsilon}(\xi)
    &=
    \frac{1}{2\pi i}
    \int_{\Gamma_{\theta,\delta}}
    e^\eta\eta^{m+\nu-1}
    \int_\varepsilon^\infty
    e^{-s\eta^\nu}
    s^{-d/(2\sigma)}
    p_\sigma\big(s^{-1/(2\sigma)}\xi\big)
    \,\mathrm ds\,\mathrm d\eta\\
    &=:K^\varepsilon(\xi).
\end{align*}
By \eqref{eq:stable-scaling-L1},
\begin{align*}
    \|K^\varepsilon-K\|_{L^1}
    &\lesssim
    \int_{\Gamma_{\theta,\delta}}
    e^{\Re\eta}|\eta|^{m+\nu-1}
    \int_0^\varepsilon
    e^{-s\Re(\eta^\nu)}
    \,\mathrm ds\,|\mathrm d\eta|
    \longrightarrow0,
\end{align*}
where the convergence follows from the dominated
convergence theorem. By uniqueness of the limit, \(\mathcal F\varphi=K\) in \(\mathcal S'(\mathbb R^d)\). Since \(K\in L^1(\mathbb R^d)\), identifying \(L^1(\mathbb R^d)\) with its canonical image in \(\mathcal S'(\mathbb R^d)\), we conclude that
\[
\mathcal F\varphi\in L^1(\mathbb R^d).
\]
Finally, suppose that $\nu=1$.  In this case the symbols can be
identified explicitly from the contour representation. In particular,
the corresponding functions of $|x|^{2\sigma}$ decay exponentially as
$|x|\to\infty$. The same conclusion follows by applying the above
Fourier argument to the resulting functions of $|x|^{2\sigma}$, or
directly from the corresponding stable semigroup representation.
Thus
\begin{equation}
    \mathcal F\varphi_{\theta,\delta,m}
    =
    \ \mathcal F\phi_{\theta,\delta,m}
    \in L^1(\mathbb R^d).
\end{equation}
\end{proof}
\end{lemma}

\subsection{Schauder estimate}
In this section, we build the Schauder-type estimate for time-fractional heat flow. Firstly, we build a regularity estimate for $\varphi(\epsilon D)$ and $\phi(\epsilon D)$.
\begin{lemma}\label{le;decay}
For $\epsilon>0$, $\delta\in[0,2\sigma]$, we have
\begin{align*}
    \Vert \varphi_{\theta,\delta,m}(\epsilon D)u\Vert_{\beta+\delta}
    &\lesssim \epsilon^{-\delta}\Vert u\Vert_{\beta},\quad
    \Vert\varphi_{\theta,\delta,m}(\epsilon D)u\Vert_\delta
    \lesssim\epsilon^{-\delta}\Vert u\Vert_{L^\infty}.
\end{align*}
Moreover, for $\delta\in[0,4\sigma]$, we have
\begin{align*}
    \Vert \phi_{\theta,\delta,m}(\epsilon D)u\Vert_{\beta+\delta}
    &\lesssim \epsilon^{-\delta}\Vert u\Vert_{\beta},\quad
    \Vert\phi_{\theta,\delta,m}(\epsilon D)u\Vert_\delta
    \lesssim\epsilon^{-\delta}\Vert u\Vert_{L^\infty}.
\end{align*}
In particularly, for $\beta\in(-2\sigma,0)$, we have
\begin{align*}
    \Vert \varphi(\epsilon D) u\Vert_{L^\infty}\lesssim \epsilon^{\beta}\Vert u\Vert_\beta
\end{align*}
and for $\beta\in(-4\sigma,0)$, we have
\begin{align*}
   \Vert \phi(\epsilon D)u\Vert_{L^\infty}\lesssim \epsilon^\beta\Vert u\Vert_\beta.
\end{align*}
\begin{proof}
Let $\psi\in C_0^\infty(\mathbb R^d)$ be supported in an annulus and satisfy
$\psi\rho=\rho$, where $(\chi,\rho)$ is the dyadic partition of unity.
For $j\geq0$, we then have
\begin{align*}
    \varphi(\epsilon D)\Delta_j u
    =
    \mathcal F^{-1}
    \bigl(\varphi(\epsilon\,\cdot)\psi(2^{-j}\cdot)\bigr)
    *\Delta_j u.
\end{align*}
Here and below, we write $\varphi=\varphi_{\theta,\delta,m}$ for simplicity.

We first estimate the $L^1$-norm of the convolution kernel. Let
$N\in\mathbb N$ be such that $N>d/2$. By a change of variables,
\begin{align*}
    \bigl\|
    \mathcal F^{-1}
    \bigl(\varphi(\epsilon\,\cdot)\psi(2^{-j}\cdot)\bigr)
    \bigr\|_{L^1}
    =
    \bigl\|
    \mathcal F^{-1}
    \bigl(\varphi(2^j\epsilon\,\cdot)\psi(\cdot)\bigr)
    \bigr\|_{L^1}.
\end{align*}
Hence,
\begin{align*}
    &\bigl\|
    \mathcal F^{-1}
    \bigl(\varphi(2^j\epsilon\,\cdot)\psi(\cdot)\bigr)
    \bigr\|_{L^1}
    \\
    &\qquad\lesssim
    \bigl\|
    (1+|\cdot|^2)^N
    \mathcal F^{-1}
    \bigl(\varphi(2^j\epsilon\,\cdot)\psi(\cdot)\bigr)
    \bigr\|_{L^\infty}
    \\
    &\qquad\lesssim
    \bigl\|
    (1-\Delta)^N
    \bigl(\varphi(2^j\epsilon\,\cdot)\psi(\cdot)\bigr)
    \bigr\|_{L^1}.
\end{align*}

Since $\psi$ is compactly supported in an annulus away from the origin,
we may choose a compact annulus $K$ such that
$\operatorname{supp}\psi\subset K^\circ$. By the local elliptic estimate
and \eqref{eq;decay;varphi}, whenever $2^j\epsilon\gtrsim1$,
\begin{align*}
    \bigl\|
    (1-\Delta)^N
    \bigl(\varphi(2^j\epsilon\,\cdot)\psi(\cdot)\bigr)
    \bigr\|_{L^1}
    &\lesssim
    \sum_{k=0}^N
    (2^j\epsilon)^{2k}
    \bigl\|
    (\Delta^k\varphi)(2^j\epsilon\,\cdot)
    \bigr\|_{L^\infty(K)}
    \\
    &\lesssim
    (2^j\epsilon)^{-2}.
\end{align*}
Consequently, for $\delta\in[0,2]$ and $2^j\epsilon\gtrsim1$,
\begin{align}\label{eq;kernel-high}
    \bigl\|
    \mathcal F^{-1}
    \bigl(\varphi(\epsilon\,\cdot)\psi(2^{-j}\cdot)\bigr)
    \bigr\|_{L^1}
    \lesssim
    (2^j\epsilon)^{-\delta}
    =
    2^{-j\delta}\epsilon^{-\delta}.
\end{align}

It remains to consider the regime $2^j\epsilon\lesssim1$. By Lemma \ref{le;Fourier L1},  and Young's inequality,
\begin{align*}
    \Vert\varphi(\epsilon D)\Delta_j u\Vert_{L^\infty}
    &\lesssim
    \Vert\Delta_j u\Vert_{L^\infty}
    \lesssim
    2^{-j\beta}\Vert u\Vert_\beta.
\end{align*}
Since $2^j\epsilon\lesssim1$, for every $\delta\geq0$ we have
$1\lesssim(2^j\epsilon)^{-\delta}$, and hence
\begin{align*}
    \Vert\varphi(\epsilon D)\Delta_j u\Vert_{L^\infty}
    \lesssim
    \epsilon^{-\delta}2^{-j(\beta+\delta)}
    \Vert u\Vert_\beta.
\end{align*}
Combining this estimate with \eqref{eq;kernel-high}, we obtain, for all
$j\geq0$,
\begin{align*}
    \Vert\varphi(\epsilon D)\Delta_j u\Vert_{L^\infty}
    \lesssim
    \epsilon^{-\delta}2^{-j(\beta+\delta)}
    \Vert u\Vert_\beta.
\end{align*}
Taking the supremum over $j$ yields
\begin{align*}
    \Vert\varphi(\epsilon D)u\Vert_{\beta+\delta}
    \lesssim
    \epsilon^{-\delta}\Vert u\Vert_\beta.
\end{align*}
Taking $\beta=0$ in the dyadic estimate and using
$\Vert\Delta_j u\Vert_{L^\infty}\lesssim\Vert u\Vert_{L^\infty}$ gives
\begin{align*}
    \Vert\varphi(\epsilon D)u\Vert_\delta
    \lesssim
    \epsilon^{-\delta}\Vert u\Vert_{L^\infty}.
\end{align*}

The proof for $\phi=\phi_{\theta,\delta,m}$ is identical. Indeed, using
the corresponding decay estimate, we obtain
\begin{align*}
    \bigl\|
    (1-\Delta)^N
    \bigl(\phi(2^j\epsilon\,\cdot)\psi(\cdot)\bigr)
    \bigr\|_{L^1}
    &\lesssim
    \sum_{k=0}^N
    (2^j\epsilon)^{2k}
    \bigl\|
    (\Delta^k\phi)(2^j\epsilon\,\cdot)
    \bigr\|_{L^\infty(K)}
    \\
    &\lesssim
    (2^j\epsilon)^{-4}.
\end{align*}
Thus, for every $\delta\in[0,4]$,
\begin{align*}
    \bigl\|
    \mathcal F^{-1}
    \bigl(\phi(\epsilon\,\cdot)\psi(2^{-j}\cdot)\bigr)
    \bigr\|_{L^1}
    \lesssim
    2^{-j\delta}\epsilon^{-\delta}
\end{align*}
in the high-frequency regime, while the same low-frequency argument
applies. This proves both estimates for $\phi$.
Finally, we establish the estimates in the \(L^\infty\)-norm. By the Littlewood–Paley decomposition and the decay properties of \(\varphi\), we have
\[
\begin{aligned}
\|\varphi(\epsilon D)u\|_{L^\infty}
&\leq \sum_{j\geq -1}\|\varphi(\epsilon D)\Delta_j u\|_{L^\infty}\\
&\lesssim \sum_{j\geq -1}
(1+2^{2\sigma j}\epsilon^{2\sigma})^{-k/\sigma-1}
2^{-j\beta}\|u\|_\beta.
\end{aligned}
\]
We split the sum into high and low frequencies. For \(2^j\epsilon\gtrsim 1\), using \(\beta>-2\sigma\), we obtain
\[
\begin{aligned}
\sum_{j:\,2^j\epsilon\gtrsim 1}
\|\varphi(\epsilon D)\Delta_j u\|_{L^\infty}
&\lesssim
\|u\|_\beta
\sum_{j:\,2^j\epsilon\gtrsim 1}
(1+2^{2\sigma j}\epsilon^{2\sigma})^{-1}2^{-j\beta}\\
&\lesssim
\epsilon^{-2\sigma}\|u\|_\beta
\sum_{j:\,2^j\epsilon\gtrsim 1}
2^{-j(2\sigma+\beta)}\\
&\lesssim \epsilon^\beta\|u\|_\beta.
\end{aligned}
\]
On the other hand, for \(2^j\epsilon\lesssim 1\), since \(\beta<0\), we have
\[
\begin{aligned}
\sum_{j:\,2^j\epsilon\lesssim 1}
\|\varphi(\epsilon D)\Delta_j u\|_{L^\infty}
&\lesssim
\|u\|_\beta
\sum_{j:\,2^j\epsilon\lesssim 1}2^{-j\beta}\\
&\lesssim \epsilon^\beta\|u\|_\beta.
\end{aligned}
\]
Combining the two estimates yields
\[
\|\varphi(\epsilon D)u\|_{L^\infty}
\lesssim \epsilon^\beta\|u\|_\beta,
\qquad \beta\in(-2\sigma,0).
\]
The corresponding estimate for \(\phi(\epsilon D)\) follows by the same argument, with the admissible range of \(\beta\) determined by the decay rate of $\phi$.
\end{proof}
\end{lemma}
Now recall $U(t)$ and $V(t)$ in Lemma \ref{le;duhamel}. For any $t>0$, taking $\delta=t^{-1}$ and making the change of variables $z=t\eta$, we have
\begin{align}
    U(t)
    &=
    \frac{1}{2\pi i}
    \int_{\Gamma_{\theta,1}}
    e^z z^{\nu-1}
    \big(z^\nu+t^\nu(-\Delta)^\sigma\big)^{-1}
    \,\mathrm dz \notag=
    \varphi_{\theta,1,0}
    \big(t^{\nu/(2\sigma)}D\big).
\end{align}
Similarly, $ V(t)=t^{\nu-1}\phi_{\theta,1,0}\big(t^{\nu/(2\sigma)}D\big).$
\begin{lemma}\label{le;est;fractional heat flow;linear}
    Let $\nu,\sigma\in(0,1]$. Then, for all $T>0$, $t\in(0,T]$,
    $\alpha\in\m R$, $\delta\in[0,2\sigma]$, and $f\in\ml S'$, we have
    \begin{align*}
        \Vert U(t)f\Vert_{\ml C^{\alpha+\delta}}
        &\lesssim
        t^{-\nu\delta/(2\sigma)}
        \Vert f\Vert_{\ml C^\alpha},\quad
        \Vert U(t)f\Vert_{\ml C^{\delta}}
        \lesssim
        t^{-\nu\delta/(2\sigma)}
        \Vert f\Vert_{L^\infty}.
    \end{align*}
    Moreover, for $\delta\in[0,4\sigma]$, we have
    \begin{align*}
        \Vert V(t)f\Vert_{\ml C^{\alpha+\delta}}
        &\lesssim
        t^{\nu-1-\nu\delta/(2\sigma)}
        \Vert f\Vert_{\ml C^\alpha},\quad
        \Vert V(t)f\Vert_{\ml C^{\delta}}
        \lesssim
        t^{\nu-1-\nu\delta/(2\sigma)}
        \Vert f\Vert_{L^\infty}.
    \end{align*}
In particularly, for $\beta\in(-2\sigma,0)$, we have
\begin{align*}
    \Vert U(t)f\Vert_{L^\infty}\lesssim t^{\nu\beta/(2\sigma)}\Vert f\Vert_{\beta}
\end{align*}
and for $\beta\in(-4\sigma,0)$, we have
\begin{align*}
    \Vert V(t)f\Vert_{L^\infty}\lesssim t^{\nu-1+\nu\beta/(2\sigma)}\Vert f\Vert_\beta
\end{align*}
\end{lemma}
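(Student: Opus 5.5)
The plan is to reduce everything to Lemma~\ref{le;decay} through the scaling identities derived just before the statement:
\begin{align*}
U(t)=\varphi_{\theta,1,0}\bigl(t^{\nu/(2\sigma)}D\bigr),\qquad V(t)=t^{\nu-1}\phi_{\theta,1,0}\bigl(t^{\nu/(2\sigma)}D\bigr).
\end{align*}
First I will justify these identities. Start from the contour representation in Lemma~\ref{le;duhamel}, taking the contour $\Gamma_{\theta,t^{-1}}$; this is permissible because the integrand is analytic off the negative real axis, so the radius may be changed by Cauchy's theorem. Then substitute $z=t\eta$. For $U$ the factor $\eta^{\nu-1}\,\mathrm d\eta$ gives $t^{-\nu}z^{\nu-1}\,\mathrm dz$, and the resolvent $(\eta^\nu+(-\Delta)^\sigma)^{-1}$ becomes $t^{\nu}(z^\nu+t^\nu(-\Delta)^\sigma)^{-1}$. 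For $V$ there is an extra $t^{-1}$ from $\mathrm d\eta$, which produces the prefactor $t^{\nu-1}$. Since $t^\nu|\xi|^{2\sigma}=|t^{\nu/(2\sigma)}\xi|^{2\sigma}$, the symbols are exactly $\varphi_{\theta,1,0}(t^{\nu/(2\sigma)}\xi)$ and $t^{\nu-1}\phi_{\theta,1,0}(t^{\nu/(2\sigma)}\xi)$, with $m=0$.

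Next I will apply Lemma~\ref{le;decay} with $\epsilon=t^{\nu/(2\sigma)}$, writing the smoothing index as $\delta$ (not to be confused with the contour radius, now fixed to $1$). For $\delta\in[0,2\sigma]$ this gives
\begin{align*}
\|U(t)f\|_{\alpha+\delta}\lesssim \epsilon^{-\delta}\|f\|_\alpha = t^{-\nu\delta/(2\sigma)}\|f\|_\alpha,
\end{align*}
and the same bound holds with $L^\infty$ on the right. For $V$, Lemma~\ref{le;decay} with $\delta\in[0,4\sigma]$, multiplied by $t^{\nu-1}$, gives $t^{\nu-1-\nu\delta/(2\sigma)}$. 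The $L^\infty$ bounds for negative $\beta$ follow in the same way from the last part of Lemma~\ref{le;decay}: $\epsilon^\beta=t^{\nu\beta/(2\sigma)}$, again with the extra $t^{\nu-1}$ for $V$. When $\nu=1$ the same substitution applies, and Lemma~\ref{le;decay;varphi;phi} gives even faster decay, so the restricted ranges of $\delta$ hold a fortiori.

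The step I expect to be the main obstacle is making sure that Lemma~\ref{le;decay} really covers the full ranges claimed. There are two issues.

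\textbf{Range of $\delta$.} The kernel bound in its proof is written as $(2^j\epsilon)^{-2}$ and $(2^j\epsilon)^{-4}$. What is actually needed is $(2^j\epsilon)^{-2\sigma}$ and $(2^j\epsilon)^{-4\sigma}$. These come from \eqref{eq;decay;varphi} and \eqref{eq;decay;phi}, where $(1+|x|^{2\sigma})^{-1}$ and $(1+|x|^{2\sigma})^{-2}$ are evaluated at $|x|\sim 2^j\epsilon$. The terms $(2^j\epsilon)^{2k}|\Delta^k\varphi|$ carry the extra factor $(2^j\epsilon)^{2k-2k}$ and so are harmless. I will re-derive this high-frequency bound with the correct exponent, since the stated ranges $[0,2\sigma]$ and $[0,4\sigma]$ depend on it.

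\textbf{Uniformity in $j$ at low frequencies.} In the regime $2^j\epsilon\lesssim 1$, one needs the $L^1$ norm of $\mathcal F^{-1}(\varphi(\epsilon\cdot)\psi(2^{-j}\cdot))$ to be bounded uniformly. By scaling invariance of the $L^1$ norm, this follows from Lemma~\ref{le;Fourier L1} together with Young's inequality, because multiplication by the smooth cutoff preserves integrability of the Fourier transform. The block $j=-1$ is handled the same way using $\chi$.

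Finally, the constants are uniform in $t\in(0,T]$, because the contour $\Gamma_{\theta,1}$ is fixed and all $t$-dependence sits in $\epsilon$.
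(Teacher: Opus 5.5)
Your proposal is correct and follows the paper's route: the paper reads the lemma off from the scaling identities $U(t)=\varphi_{\theta,1,0}(t^{\nu/(2\sigma)}D)$ and $V(t)=t^{\nu-1}\phi_{\theta,1,0}(t^{\nu/(2\sigma)}D)$ combined with Lemma~\ref{le;decay} at $\epsilon=t^{\nu/(2\sigma)}$, and your correction of that lemma's kernel bounds to $(2^j\epsilon)^{-2\sigma}$ and $(2^j\epsilon)^{-4\sigma}$ is right. One precision to add is that the implicit constant must depend on $T$, through the low-frequency block and $\epsilon\le T^{\nu/(2\sigma)}$, and not only on the fixed contour: for instance $U(t)1=1$, so the bound cannot hold uniformly in $t>0$, although the statement's restriction $t\in(0,T]$ permits this dependence.
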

Next we build the Schauder-type estimate for time-fractional heat flow. For the space $C_T\ml C^\beta$, the estimate is enough to prove by the above proceed, while for the space $C_T^\kappa L^\infty$, the key point is to calculate the estimate for $U'(t)$ and $V'(t)$. Using Lemma \ref{le;duhamel}, $U'(t)$ and $V'(t)$ have the representation
\begin{equation}
    U'(t)f =\frac{1}{2\pi i}\int_{\Gamma_{\theta,\delta }} e^{\eta t}\eta^\nu \frac{1}{\eta^\nu+(-\Delta)^\sigma}f\mathrm d\eta,\quad V'(t)f=\frac{1}{2\pi i}\int_{\Gamma_{\theta,\delta }} e^{\eta t}\eta \frac{1}{\eta^\nu+(-\Delta)^\sigma}f\mathrm d\eta.
\end{equation}
Choosing $\delta=t^{-1}$ then we have
\begin{equation}
    \varphi_{\theta,1,1}(t^{\nu/(2\sigma)}k)\widehat f(k)=t\ml F(U'(t)f),\quad \phi_{\theta,1,1}(t^{\nu/(2\sigma)}k)\widehat f(k)=t^{2-\nu}\ml F(V'(t)f)
\end{equation} 
namely,
\begin{equation}\label{eq;U',V'}
    U'(t)f= t^{-1} \varphi_{\theta,1,1}(t^{\nu/(2\sigma)}D) f,\quad V'(t)f=t^{\nu-2}\phi_{\theta,1,1}(t^{\nu/(2\sigma)}D)f.
\end{equation}
then by Lemma \ref{le;decay}

\begin{lemma}\label{le;est;linear;derivative}
    Let $\nu\in(0,1)$. Then, for all $t>0$, $\alpha\in \m R$, $\delta\in[-2\sigma,2\sigma]$, and $f\in\ml S'$, we have
    \begin{align*}
        \Vert U'(t)f\Vert_{\alpha+\delta}\lesssim t^{-\nu\delta/(2\sigma)-1}\Vert f\Vert_\alpha,\quad \Vert U'(t)f\Vert_{\ml C^\delta}\lesssim t^{-\nu\delta/(2\sigma)-1}\Vert f\Vert_{L^\infty}
    \end{align*}
    and for $\delta\in[0,4\sigma]$, we have
    \begin{align*}
         \Vert V'(t)f\Vert_{\alpha+\delta}\lesssim t^{\nu-2-\nu\delta/(2\sigma)}\Vert f\Vert_{\alpha},\quad \Vert V'(t)f\Vert_{C^\delta}\lesssim t^{\nu-2-\nu\delta/(2\sigma)}\Vert f\Vert_{L^\infty}
    \end{align*}
    Moreover, for $\alpha\in(0,2\sigma)$, we have
    \begin{equation*}
        \Vert U'(t)f\Vert_{L^\infty}\lesssim t^{\nu\alpha/(2\sigma)-1}\Vert f\Vert_{\alpha}.
    \end{equation*}
    and for $\beta\in(-4\sigma,0)$, we have
    \begin{equation*}
        \Vert V'(t)f\Vert_{L^\infty}\lesssim t^{\nu-2+\nu\beta/(2\sigma)}\Vert f\Vert_\beta
    \end{equation*}
  
\begin{proof}
    By Lemma \ref{le;decay;varphi;phi} and
    \eqref{eq;U',V'}, it remains to establish the
    estimates for $U'(t)$ with negative regularity
    gain and the $L^\infty$ estimate for
    $f\in\mathcal C^\alpha$, $\alpha\in(0,2\sigma)$.

    We use the identity
    \[
        U'(t)=-(-\Delta)^\sigma V(t).
    \]
    By Lemma \ref{le;est;fractional heat flow;linear},
    for every $\delta\in[0,2\sigma]$, we have
    \begin{align*}
        \|U'(t)f\|_{\alpha+\delta-2\sigma}
        &=
        \|V(t)(-\Delta)^\sigma f\|_{\alpha+\delta-2\sigma}\\
        &\lesssim
        t^{\nu-1-\nu\delta/(2\sigma)}
        \|(-\Delta)^\sigma f\|_{\alpha-2\sigma}\\
        &\lesssim
        t^{\nu-1-\nu\delta/(2\sigma)}
        \|f\|_\alpha.
    \end{align*}
    Setting $\beta=\delta-2\sigma\in[-2\sigma,0]$,
    we obtain
    \[
        \|U'(t)f\|_{\alpha+\beta}
        \lesssim
        t^{-1-\nu\beta/(2\sigma)}
        \|f\|_\alpha.
    \]

    Finally, for $\alpha\in(0,2\sigma)$,
    the $L^\infty$ estimate for $V(t)$ gives
    \begin{align*}
        \|U'(t)f\|_{L^\infty}
        &=
        \|V(t)(-\Delta)^\sigma f\|_{L^\infty}\\
        &\lesssim
        t^{\nu-1+\nu(\alpha-2\sigma)/(2\sigma)}
        \|(-\Delta)^\sigma f\|_{\alpha-2\sigma}\\
        &\lesssim
        t^{\nu\alpha/(2\sigma)-1}
        \|f\|_\alpha.
    \end{align*}
    This completes the proof.
\end{proof}

\end{lemma}

\begin{lemma}\label{le;est;Schauder-type}
    Let $\nu\in(0,1]$, and let $U(t)$, $V(t)$ be introduced in Lemma \ref{le;duhamel}. Then, for all $T>0$, $t\in(0,T]$, $\alpha\in\m R$, and $\delta>0$, we have 
    \begin{align*}
        \Vert U(t) f\Vert_{C_T\ml C^\alpha}\lesssim\Vert f\Vert_{\ml C^\alpha}. 
    \end{align*}
    Letting $\ml V(t)=\int_0^t V(t-s) v(s)\mathrm ds$, we have
    \begin{equation}
        t^\gamma\Vert \ml V(t)\Vert_{\beta+2\sigma}\lesssim \sup_{s\in[0,t]}(s^\gamma \Vert v(s)\Vert_\beta)
    \end{equation}
    for all $\gamma\in[0,1)$ and all $t\in[0,T]$. 
    \par Moreover, let $\alpha\in(0,2\sigma)$ and if $\kappa\leq\nu\alpha/(2\sigma)$, we have
    \begin{align*}
        \Vert U(t)f\Vert_{C_T^\kappa L^\infty}\lesssim \Vert f\Vert_{\ml C^\alpha},
    \end{align*}
    and if $\kappa+\gamma\leq\nu+\nu\beta/(2\sigma)$, $\beta\in(-2\sigma,0)$, and $\gamma\in[0,1)$, then we also have
    \begin{equation}
        \Vert \ml V\Vert_{C_T^{\kappa}L^\infty}\lesssim \sup_{s\in[0,T]}s^\gamma\Vert v(s)\Vert_\beta.
    \end{equation}
    \begin{proof}
        For any $t>0$, by Lemma \ref{le;est;fractional heat flow;linear}, we have
        \[
            \Vert U(t)f\Vert_{\ml C^\alpha}\lesssim \Vert f\Vert_{\ml C^\alpha}.
        \]
        
        Consider $\Delta_q \ml V$ for $q\geq0$. By a change of variables, for $\eta\in[0,t/2]$, we divide the integral into two parts:
        \begin{align*}
            \Delta_q \ml V(t)
            &=\int_0^t V(s)(\Delta_qv)(t-s)\mathrm ds\\
            &=\int_0^\eta V(s)(\Delta_qv)(t-s)\mathrm ds
            +\int_\eta^t V(s)(\Delta_qv)(t-s)\mathrm ds.
        \end{align*}
        Let $M=\sup_{s\in[0,t]}(s^{\gamma}\Vert v(s)\Vert_\beta)$. For the first term, we have
        \begin{align*}
            \Vert \int_0^\eta V(s)(\Delta_q v)(t-s)\mathrm ds\Vert_{L^\infty}
            &\lesssim \int_0^\eta s^{\nu-1}2^{-q\beta}\Vert v(t-s)\Vert_\beta\mathrm ds\\
            &\lesssim M2^{-q\beta}\int_0^\eta s^{\nu-1}(t-s)^\gamma\mathrm ds\\
            &\lesssim M2^{-q\beta}t^{\nu-\gamma}\int_0^{\eta/t}s^{\nu-1}\mathrm ds\\
            &\lesssim M2^{-q\beta}t^{-\gamma}\eta^\nu
        \end{align*}
        For the second term, using Lemma \ref{le;est;fractional heat flow;linear}, we have
        \begin{align*}
            \Vert \int_\eta^t V(s)(\Delta_qv)(t-s)\mathrm ds\Vert_{L^\infty}
            &\lesssim \int_\eta^ts^{-1+\nu\epsilon}2^{-q(\beta+2\sigma(1-\epsilon))}\Vert v(t-s)\Vert_{\beta}\mathrm ds\\
            &\lesssim M2^{-q(\beta+2\sigma(1-\epsilon))}\int_{\eta}^t s^{-1+\nu\epsilon}(t-s)^{-\gamma}\mathrm ds\\
            &=M2^{-q(\beta+2\sigma(1-\epsilon))}t^{\nu\epsilon-\gamma}
            \int_{\eta/t}^1\frac{1}{\tau^{1-\nu\epsilon}(1-\tau)^\gamma}\mathrm d\tau\\
            &\lesssim M2^{-q(\beta+2\sigma)}(2^{2\sigma q}\eta^\nu)^{\epsilon}t^{-\gamma}.
        \end{align*}
        Now, if $2^{-2\sigma q}\leq (t/2)^\nu$, we choose $\eta=2^{-\frac{2\sigma q}{\nu}}$. Then
        \[
            \Vert \Delta_q \ml V(t)\Vert_{L^\infty}
            \lesssim M2^{-q(\beta+2\sigma)}t^{-\gamma}.
        \]
        If $2^{-2\sigma q}>(t/2)^\nu$, taking $\eta=t/2$ and noting that $2^{2\sigma q}\eta^\nu<1$, we again obtain
        \[
            \Vert \Delta_q\ml V(t)\Vert_{L^\infty}
            \lesssim M2^{-q(\beta+2\sigma)}t^{-\gamma}.
        \]
        Hence,
        \begin{equation}
            t^\gamma\Vert \ml V(t)\Vert_{\beta+2\sigma}
            \lesssim \sup_{s\in[0,t]}s^\gamma\Vert v(s)\Vert_\beta.
        \end{equation}
        For the third claim, since $\alpha\in(0,2\sigma)$, by Lemma \ref{le;est;linear;derivative}, we have
        \begin{align*}
            \Vert U(t)f-U(s)f\Vert_{L^\infty}\lesssim \int_s^t \Vert U'(\tau)f\Vert_{L^\infty}\lesssim \int_s^t \tau^{\nu\alpha/(2\sigma)-1}\mathrm d\tau\Vert f\Vert_\alpha\lesssim |t-s|^{\nu\alpha/(2\sigma)}\Vert f\Vert_\alpha.
        \end{align*}

        recall that
        \[
            M=\sup_{s\in[0,T]}s^{\gamma}\Vert v(s)\Vert_\beta.
        \]
        For $0\leq s<t\leq T$, we have
        \begin{equation}\label{eq;V(t)-V(s)}
              \ml V(t)-\ml V(s)
              =\int_s^t V(t-\tau)v(\tau)\mathrm d\tau
              +\int_0^s (V(t-\tau)-V(s-\tau))v(\tau)\mathrm d\tau.
        \end{equation}
        For the first integral, since $\kappa+\gamma\leq \nu+\nu\beta/(2\sigma)$, we have 
        \begin{align*}
            \Vert\int_s^t V(t-\tau)v(\tau)\mathrm d\tau\Vert_{L^\infty}
            &\leq \int_s^t\Vert V(t-\tau)v(\tau)\Vert_{L^\infty}\mathrm d\tau\\
            &\lesssim |t-s|^{\nu+\nu\beta/(2\sigma)-\gamma}M
        \end{align*}
    we obtain
    \begin{equation}
        \Vert \int_s^t V(t-\tau)v(\tau)\mathrm d\tau\Vert_{L^\infty}
        \lesssim M(t-s)^{\kappa}.
    \end{equation}
    For the second integral on the right-hand side of \eqref{eq;V(t)-V(s)}, by Lemma \ref{le;fractional calculus}, since $\kappa+\gamma\leq\nu+\nu\beta/(2\sigma)$, $\beta\in(-2\sigma,0)$ and $0\leq s<t\leq T$, we have
    \begin{align*}
        &\Vert \int_0^s (V(t-\tau)-V(s-\tau))v(\tau)\mathrm d\tau \Vert_{L^\infty}\\
        &\quad\leq\int_0^s\int_{s-\tau}^{t-\tau}\Vert V'(\eta)v(\tau)\Vert_{L^\infty}\mathrm d\eta\mathrm d\tau\\
        &\quad\lesssim M\int_0^s\int_{s-\tau}^{t-\tau}
        \eta^{\nu-2-\nu(-\beta)/(2\sigma)}\tau^{-\gamma}\mathrm d\eta\mathrm d\tau\\
        &\quad\lesssim M|t-s|^\kappa.
    \end{align*}
    This completes the proof.
\end{proof}
\end{lemma}
Next, we establish a truncated version of Schauder-type estimates, which is helpful for controlling the difference when establish the global result and blow up criterion.
\begin{lemma}[Truncated Schauder-type estimate]
\label{le;est;truncated-Schauder}
Let $\nu\in(0,1]$, and let $V(t)$ be introduced in Lemma~\ref{le;duhamel}. Let $T>0$, $\beta\in\mathbb R$, $\gamma\in[0,1)$, and $a>0$. For $t\in[0,T]$, define
\[
    \mathcal V_a(t)
    :=
    \int_0^a V(t+a-s)v(s)\,\mathrm ds.
\]
Then
\begin{equation}\label{eq;truncated-Schauder}
    (t+a)^\gamma
    \|\mathcal V_a(t)\|_{\beta+2\sigma}
    \lesssim
    \sup_{s\in[0,a]}
    s^\gamma\|v(s)\|_\beta,
\end{equation}
where the implicit constant is independent of $a$ and $t$. In particular,
\[
    t^\gamma
    \|\mathcal V_a(t)\|_{\beta+2\sigma}
    \lesssim
    (\frac{t}{(t+a)})^\gamma\sup_{s\in[0,a]}
    s^\gamma\|v(s)\|_\beta.
\]

Moreover, if $\beta\in(-2\sigma,0)$ and
\[
    \kappa+\gamma
    \leq 
    \nu+\frac{\nu\beta}{2\sigma},
\]
then
\begin{equation}\label{eq;truncated-holder}
    \|\mathcal V_a\|_{C_T^\kappa L^\infty}
    \lesssim
    \sup_{s\in[0,a]}
    s^\gamma\|v(s)\|_\beta.
\end{equation}

In particular, for $0<T_0\leq\tau\leq T$ and $t\geq0$ such that
$\tau+t\leq T$, we have
\begin{equation}\label{eq;truncated-Schauder-block}
    (\tau+t)^\gamma
    \left\|
        \int_0^{T_0}
        V(\tau+t-s)v(s)\,\mathrm ds
    \right\|_{\beta+2\sigma}
    \lesssim
    \sup_{s\in[0,T_0]}
    s^\gamma\|v(s)\|_\beta.
\end{equation}
\begin{proof}
By the change of variables $r=t-s$,
\[
    \mathcal V_a(t)
    =
    \int_{t-a}^{t}V(r)v(t-r)\,\mathrm dr.
\]
The proof is the same as that of Lemma~\ref{le;est;Schauder-type}. Indeed, for each dyadic block $\Delta_q$, the integration in the proof of Lemma~\ref{le;est;Schauder-type} is now restricted to the interval $[t-a,t]$. Thus, after splitting at $\eta\in[0,t/2]$, the two integration regions are replaced by
\[
    [0,\eta]\cap[t-a,t]
    \qquad\text{and}\qquad
    [\eta,t]\cap[t-a,t],
\]
respectively. Since all the estimates in the proof of Lemma~\ref{le;est;Schauder-type} are obtained by taking absolute values of the corresponding integrals, restricting the integration domains does not change the estimates. Hence, with
\[
    M_a
    :=
    \sup_{s\in[0,a]}
    s^\gamma\|v(s)\|_\beta,
\]
the same choice of $\eta$ yields
\[
    \|\Delta_q\mathcal V_a(t)\|_{L^\infty}
    \lesssim
    M_a\,2^{-q(\beta+2\sigma)}t^{-\gamma}.
\]
Taking the supremum over $q$ gives
\[
    t^\gamma
    \|\mathcal V_a(t)\|_{\beta+2\sigma}
    \lesssim M_a,
\]
which proves \eqref{eq;truncated-Schauder}. Similarly, since 
\begin{align*}
    \ml V_a(t)-\ml V(s)=\int_0^a[ V(t+a-r)-V(s+a-r)]v(r)\mathrm dr
\end{align*}
 then the estimate is obviously hold.
The estimate \eqref{eq;truncated-Schauder-block} follows by applying
\eqref{eq;truncated-Schauder} with $a=T_0$ and replacing $t$ by
$\tau+t$.
\end{proof}
\end{lemma}

\section{Rough Burgers type equation}\label{sec;Rough Burgers}
In this section, we consider the Burgers type equation
\begin{equation}\label{eq;Burgers type}
    \p_t^\nu u+(-\Delta)^{\sigma} u=\ml N(u)\p_xu+\xi
\end{equation}

\subsection{Construction of
\texorpdfstring{$\vartheta$ and $\vartheta\circ\partial_x\vartheta$}
{theta and its resonant product with its spatial derivative}}
Let $\vartheta$ satisfy the linear equation
\begin{equation}
\p_t^\nu \vartheta+(-\Delta)^\sigma\vartheta=\xi,\quad \vartheta(0)=0.
\end{equation}
Recall the fact $V(t)=t^{\nu-1}\phi_{\theta,1,0}(t^{\nu/2\sigma}D)$, we have the Fourier mode $k$ of $\vartheta$ satisfying that
\begin{equation}
\widehat\vartheta(t,k)=\int_0^t (t-\tau)^{\nu-1} \phi((t-\tau)^{\nu/2}k) \mathrm d\tau
\end{equation}
where we write $\phi=\phi_{\theta,1,0}$ simply. Unlike the case $\nu=1$, the covariance does not admit an elementary closed-form expression. We therefore introduce a decomposition of $\m E[\widehat{\vartheta}(t,k)\widehat{\vartheta}(s,k')]$. Define $I_{t,t'}(f)$ and $II_{t,t'}(f)$ as follows:
\begin{align}
   \label{eq;I} I_{t,t'}(f)=&\int_0^{t\wedge t'}(f(t-s)-f(t'-s))^2\mathrm{d}s\\
   \label{eq;II} II_{t,t'}(f)=&\int_{t\wedge t'}^{t\vee t'}f^2(t\vee t'-s)\mathrm{d}s
\end{align}
The quantities $I_{t,t'}(f)$ and $II_{t,t'}(f)$ satisfy several useful properties.
\begin{itemize}
\item {(1) $I_{t,t}=II_{t,t}=0$.}
\item {(2) $I_{t,t'}=I_{t',t}$, $II_{t,t'}=II_{t',t}$.}
\end{itemize}
Using the above definitions, we can represent the covariance of $\widehat{\vartheta}(t,k)$ as follows.
\begin{lemma}\label{le;covariance}
Let $\widehat{\vartheta}(t,k)=\ml F_x{\vartheta(t,x)}(k)$, $\ml E_{\nu,k}=t^{\nu-1}\phi(t^{\nu/2\sigma}k)$, then for any $0<s,t\leq T$ we have following decomposition
\begin{align*}
    \m E[\widehat{\vartheta}(t,k)\widehat{\vartheta}(s,k')]=\textbf{1}_{k+k'=0}\frac{1}{4\pi}(II_{t\vee s,0}(\ml E_{\nu,k})+II_{t\wedge s,0}(\ml E_{\nu,k})-I_{t,s}(\ml E_{\nu,k})-II_{t,s}(\ml E_{\nu,k}))
\end{align*}
Particularly, if $t=s$, then we have 
\begin{align*}
    \m E[\widehat{\vartheta}(t,k)\widehat{\vartheta}(t,k')]=\frac{\textbf{1}_{k+k'=0}}{2\pi}II_{t,0}(\ml E_{\nu,k})
\end{align*}
Let $\widehat \vartheta_{s,t}(k)=\widehat \vartheta(t,k)-\widehat \vartheta(s,k)$, then we have
    \begin{align*}
   \m E[\widehat \vartheta_{s,t}(k)\widehat \vartheta_{s,t}(k')]=\frac{\textbf{1}_{k+k'=0}}{2\pi}(I_{t,s}(\ml E_{\nu,k})+II_{t,s}(\ml E_{\nu,k})).
    \end{align*}
\end{lemma}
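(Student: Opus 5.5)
We will start from the Fourier-mode mild representation $\widehat\vartheta(t,k)=\int_0^t \mathcal E_{\nu,k}(t-\tau)\,\widehat\xi(\tau,k)\,\mathrm d\tau$, where $\mathcal E_{\nu,k}(r)=r^{\nu-1}\phi(r^{\nu/(2\sigma)}k)$. Here $\widehat\xi(\cdot,k)$ is complex Gaussian white noise in time, with
\[
\mathbb E[\widehat\xi(\tau,k)\widehat\xi(\tau',k')]=\tfrac{1}{2\pi}\mathbf 1_{k+k'=0}\delta(\tau-\tau'),
\]
using the normalization of the spatial Fourier transform on the torus. The factor $1/(2\pi)$ is fixed by this convention, and we will check it against the $t=s$ formula. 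Since $\phi$ is radial, $\mathcal E_{\nu,k}=\mathcal E_{\nu,-k}$ is real, so no conjugates appear.

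By the Itô isometry, for $s\le t$ we get
\[
\mathbb E[\widehat\vartheta(t,k)\widehat\vartheta(s,k')]=\tfrac{1}{2\pi}\mathbf 1_{k+k'=0}\int_0^{s} f(t-r)f(s-r)\,\mathrm dr,
\qquad f=\mathcal E_{\nu,k}.
\]
Taking $t=s$ gives directly $\frac{1}{2\pi}\int_0^t f(t-r)^2\,\mathrm dr=\frac{1}{2\pi}II_{t,0}(f)$, since $II_{t,0}(f)=\int_0^t f^2(t-r)\,\mathrm dr$ by definition. This proves the second formula.

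For the general case we polarize with $2ab=a^2+b^2-(a-b)^2$:
\[
2\int_0^s f(t-r)f(s-r)\,\mathrm dr=\int_0^s f(t-r)^2\,\mathrm dr+\int_0^s f(s-r)^2\,\mathrm dr-I_{t,s}(f).
\]
The middle term is $II_{s,0}(f)=II_{t\wedge s,0}(f)$. The first term equals $\int_0^t f(t-r)^2\,\mathrm dr-\int_s^t f(t-r)^2\,\mathrm dr=II_{t,0}(f)-II_{t,s}(f)$. Dividing by 2 and multiplying by $\frac{1}{2\pi}$ gives the stated $\frac{1}{4\pi}(\cdots)$ decomposition. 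Symmetry in $s,t$, via properties (1)–(2), handles $t<s$.

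For the increment, expand
\[
\mathbb E[\widehat\vartheta_{s,t}(k)\widehat\vartheta_{s,t}(k')]=C(t,t)+C(s,s)-2C(t,s).
\]
Substituting the two previous formulas, the $II_{\cdot,0}$ terms cancel and we are left with $\frac{1}{2\pi}(I_{t,s}+II_{t,s})$. A direct check confirms this: $\widehat\vartheta_{s,t}=\int_0^s(f(t-r)-f(s-r))\,\mathrm d\widehat W+\int_s^t f(t-r)\,\mathrm d\widehat W$, and these two stochastic integrals are independent.

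The main obstacle is justifying the Itô isometry, because $f(r)\sim r^{\nu-1}$ near $0$. We need $f\in L^2(0,T)$, and this holds only if $\nu>1/2$. This is consistent with the paper's regime, since $6\sigma/(12\sigma-5)>1/2$. Alternatively, we can interpret everything mode by mode, using the bound $|\phi|\lesssim 1$ from Lemma \ref{le;Fourier L1} (via $L^1$ Fourier transform) to get $|f(r)|\lesssim r^{\nu-1}$. The remaining care is to track the Fourier normalization so that the constant $1/(2\pi)$ is correct.
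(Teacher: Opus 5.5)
Your proof is correct. The paper gives no proof of this lemma at all. It only displays the Fourier representation of $\widehat\vartheta(t,k)$ and then states the decomposition. That displayed formula omits the noise $\widehat\xi(\tau,k)$ from the integrand and writes $\nu/2$ where $\nu/(2\sigma)$ is meant; you have correctly restored both. So your computation supplies exactly the verification the paper leaves out: the Wiener isometry mode by mode, then polarization, then the increment either via $C(t,t)+C(s,s)-2C(t,s)$ or via independence of the Wiener integrals over $[0,s]$ and $[s,t]$. The bookkeeping $\int_0^s f(t-r)^2\,\mathrm dr=II_{t,0}(f)-II_{t,s}(f)$, $\int_0^s f(s-r)^2\,\mathrm dr=II_{s,0}(f)$, with the cross term giving $I_{t,s}(f)$, is right. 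One small point: real-valuedness of $\mathcal E_{\nu,k}$ comes from the conjugation symmetry of $\Gamma_{\theta,\delta}$, since $\phi=E_{\nu,\nu}(-|\cdot|^{2\sigma})$, not from radiality. It is not needed anyway, because the covariance you use is bilinear.

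I have two remarks.

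\textbf{The condition $\nu>1/2$.} This is a genuine hypothesis that the lemma tacitly needs but does not state. By Hankel's formula, $\phi(0)=\frac{1}{2\pi i}\int_{\Gamma_{\theta,1}}e^{\eta}\eta^{-\nu}\,\mathrm d\eta=1/\Gamma(\nu)\neq0$; the paper itself uses $V(\tau)\mathcal F(\xi)(0)=\tau^{\nu-1}\phi(0)\mathcal F(\xi)(0)$ in Section~\ref{sec;time-fractional}. Hence $\mathcal E_{\nu,k}(r)\sim r^{\nu-1}/\Gamma(\nu)$ as $r\downarrow0$ for every $k$. For $\nu\le 1/2$ this gives $II_{t,0}(\mathcal E_{\nu,k})=\infty$, so no Fourier mode of $\vartheta$ has finite variance. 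Boundedness of $\phi$ gives the matching upper bound $|\mathcal E_{\nu,k}(r)|\lesssim r^{\nu-1}$. You can get this boundedness directly from Lemma~\ref{le;bound integral} with $k=1$, $\alpha=\beta=0$. So the isometry is justified exactly for $\nu>1/2$, which covers the Burgers regime, as you note.

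\textbf{The constant $1/(2\pi)$.} You cannot ``check'' it against the $t=s$ formula; you choose it. It corresponds to Fourier coefficients $\widehat f(k)=\frac{1}{2\pi}\int_{\mathbb T}e^{-ikx}f(x)\,\mathrm dx$. The proof of Proposition~\ref{prop;regularity} uses the $(2\pi)^{-2}\sum_{k,k'}$ inversion, which corresponds to $\widehat f(k)=\int_{\mathbb T}e^{-ikx}f(x)\,\mathrm dx$. Under that convention the factor would be $2\pi$ instead. This only changes harmless constants, not the structure of the identity.
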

Now our aim is to estimate $\ml E_{\nu,k}$, recall that $\ml E_{\nu,k}=t^{\nu-1}\phi(t^{\nu/2\sigma}k)$, where $\phi$ satisfy Lemma \ref{le;decay;varphi;phi}, by the decay properties of $\phi$ we have the fact for any $\theta\in[0,1]$
\begin{equation}
   \sup_{t>0,k\in\m Z} t^{1+\nu\theta+\nu\theta/\sigma-\nu}|k|^{2\theta+2\sigma\theta}|\ml E_{\nu,k}|\lesssim(1+t^\nu|k|^{2\sigma})^{1/\sigma+1}\phi(t^{\nu/(2\sigma)}k)\lesssim C
\end{equation}
Similarly, noting that $\ml E_{\nu,k}'=t^{\nu-2}\phi_{\theta,1,1}(t^{\nu/(2\sigma)}k)$, we have 
\begin{equation}
   \sup_{t>0,k\in\m Z} t^{2+\nu\theta+\nu\theta/\sigma-\nu}|k|^{2\theta+2\sigma\theta}|\ml E_{\nu,k}'|\lesssim(1+t^\nu|k|^{2\sigma})^{1/\sigma+1}\phi_{\theta,1,1}(t^{\nu/(2\sigma)}k)\lesssim C.
\end{equation}Then we obtain the two condition which Lemma \ref{le;est;I,II} need, assume that $2\nu-2\nu\theta-2\nu\theta/\sigma-1>0$, for any $\kappa\in(0,2\nu-2\nu\theta-2\nu\theta/\sigma-1]$ 
\begin{equation}\label{eq;est;covariance}
    I_{t,s}(\ml E_{\nu,k})+II_{t,s}(\ml E_{\nu,k})+|II_{t,0}(\ml E_{\nu,k})-II_{s,0}(\ml E_{\nu,k})|\lesssim |k|^{-4\theta-4\sigma\theta}T^{2\nu-2\nu\theta-2\nu\theta/\sigma-1-\kappa}|t-s|^\kappa.
\end{equation}
Combining the fractional calculus estimates with the argument in \cite{gubinelli2015paracontrolled}, we obtain the regularity of $\vartheta(t,x)$ in the following proposition   
\begin{proposition}\label{prop;regularity}
Let $\alpha<2\sigma-\sigma/\nu-1/2$ and any $p\geq1$, the process $\vartheta$ satisfies
\begin{equation}
\m E[\Vert \vartheta\Vert_{C_T\ml C^\alpha}^p]<\infty.
\end{equation}
\begin{proof}
By the argument in \cite{gubinelli2015paracontrolled}, it suffices to estimate $\m E[|\Delta_l\vartheta_{s,t}(x)|^2]$, where $\vartheta_{s,t}=\vartheta(t)-\vartheta(s)$. If $l\geq0$, then Fourier inversion, Lemma \ref{le;covariance}, and Lemma \ref{le;decay} imply that
\begin{align*}
\m E[|\Delta_l\vartheta_{s,t}(x)|^2=&(2\pi)^{-2}\sum_{k,k'\in \m Z}\rho_l(k)\rho_l(k')e^{i(k-k')x}\m E[\ml F\vartheta_{s,t}(k)\overline{\ml F\vartheta_{s,t}(k')}]\\
\lesssim &\sum_{k\in\m Z}\rho^2_l(k)(I_{s,t}(\ml E_{v,k})+II_{s,t}(\ml E_{\nu,k})\lesssim |t-s|^\kappa 2^{l(1-4\theta-4\sigma\theta)}
\end{align*}
for $0<\theta<\frac{\sigma(2\nu-1-\kappa)}{2\nu(1+\sigma)}$. Hence, for any $\alpha<2\sigma-\sigma/\nu-1/2$, by choosing $\kappa$ sufficiently small and $\theta$ sufficiently close to $\frac{\sigma(2\nu-1-\kappa)}{2\nu(1+\sigma)}$, and the fact that
\begin{equation}
    \m E[\Vert \Delta_l\vartheta_{s,t}\Vert_{L^{2p}(\m T)}^{2p}]\lesssim \Vert \m E[|\Delta_l\vartheta_{s,t}(x)|^2\Vert_{L^p}^p
\end{equation}
we obtain
\begin{align*}
\m E[\Vert \vartheta(t,\cdot)-\vartheta(s,\cdot)\Vert_{B_{2p,2p}^\alpha}^{2p}]\lesssim \sum_{l\geq-1}2^{l\alpha 2p}2^{l(1-4\theta-4\sigma\theta)p}|t-s|^{\kappa p}\lesssim |t-s|^{\kappa p}
\end{align*}
Taking $p$ sufficiently large, Kolmogorov's continuity criterion implies that $\vartheta$ admits a continuous version satisfying $ \m E[\Vert \vartheta\Vert_{C_TB_{2p,2p}^\alpha}^p]<\infty$. Since $p$ can be chosen arbitrarily large, the desired conclusion follows from the Besov embedding.

\end{proof}
\end{proposition}
Next, we construct the area $\vartheta \circ \partial_{x}\vartheta$ .
\begin{lemma}\label{le;area;1}
   Let $\sigma>3/4$ and $\nu>4\sigma/(8\sigma-3)$, define

$$
\vartheta \circ \partial_{x}\vartheta = \left(\vartheta^{k}\circ \partial_{x}\vartheta^{\ell}\right)_{1\leqslant k,\ell \leqslant n} = \left(\sum_{|i - j|\leqslant 1}\Delta \vartheta^{i}\Delta_{j}\partial_{x}\vartheta^{j}\right)_{1\leqslant k,\ell \leqslant n}.
$$

Then almost surely $\vartheta \circ \partial_{x}\vartheta \in C_{T}\mathcal{C}^{2\alpha - 1}(\mathbb{T}; \mathbb{R}^{n \times n})$ for all $\alpha<2\sigma-\sigma/\nu-1/2$ . Moreover, if $\psi \in \mathcal{S}$ is such that $\int \psi (x) \mathrm{d}x = 1$ and $\vartheta^{\epsilon} = \psi^{\epsilon} * \vartheta$ , where $\psi^{\epsilon} = \epsilon^{- 1}\psi (\epsilon^{- 1})$ , then we have for all $p \geqslant 1$ that

$$
\lim_{\epsilon \to 0}\mathbb{E}\big[\| \vartheta^{\epsilon}\circ \partial_{x}\vartheta^{\epsilon} - \vartheta \circ \partial_{x}\vartheta \|_{C_{T}\mathcal{C}^{2\alpha - 1}}^{p}\big] = 0. \quad (30)
$$
\begin{proof}
  Using a similar argument as in \cite{gubinelli2015paracontrolled}, it suffices to estimate
\[
\mathbb{E}\left[\left|\sum_{|i - j|\leqslant 1}\mathbf{1}_{\ell \leqslant i}\Delta_{\ell}(\Delta_i\vartheta(t,\cdot)\Delta_{j}\partial_x \vartheta_{s,t}^{2})(x)\right|^{2}\right].
\]
By Wick's theorem,
  \begin{align*} \m E[\widehat{\vartheta}(t,k)\widehat{\vartheta}_{s,t}(k')\overline{\widehat{\vartheta}(t,m)}\overline{\widehat{\vartheta}_{s,t}(m')}]=&\m E[\widehat{\vartheta}(t,k)\widehat{\vartheta}_{s,t}(k')]\m E[\overline{\widehat{\vartheta}(t,m)}\overline{\widehat{\vartheta}_{s,t}(m')}]\\ &+\m E[\widehat{\vartheta}(t,k)\overline{\widehat{\vartheta}_{s,t}(m')}]\m E[\widehat{\vartheta}_{s,t}(k')\overline{\widehat{\vartheta}(t,m)}]\\ &+\m E[\widehat{\vartheta}(t,k)\overline{\widehat{\vartheta}(t,m)}]\m E[\widehat{\vartheta}_{s,t}(k')\overline{\widehat{\vartheta}_{s,t}(m')}]\\ =&I_1+I_2+I_3 \end{align*}
We then expand
\begin{align*}
    &\m E\left[\left|\sum_{|i-j|\leq 1}\textbf{1}_{i\gtrsim l}\Delta_l\left(\Delta_i\vartheta(t)\Delta_j(\p_x\vartheta_{s,t})\right)(x)\right|^2\right]\\
    =&\sum_{|i-j|\leq 1}\sum_{|i'-j'|\leq 1}\textbf{1}_{i\gtrsim l}\textbf{1}_{i'\gtrsim l}
    \sum_{k,k'\in\m Z}\sum_{m,m'\in\m Z}
    \rho_l(k+k')\rho_l(m+m')\rho_i(k)\rho_j(k')\rho_{i'}(m)\rho_{j'}(m')\\
    &\times k'm'\m E\left[
    \widehat{\vartheta}(t,k)
    \widehat{\vartheta}_{s,t}(k')
    \overline{\widehat{\vartheta}(t,m)}
    \overline{\widehat{\vartheta}_{s,t}(m')}
    \right]=A_1+A_2+A_3
\end{align*}
where, for $r=1,2,3$, $A_r$ denotes the contribution corresponding to the Wick contraction $I_r$. We first observe that the contribution of $I_1$ to the above sum vanishes. Indeed, by Lemma \ref{le;covariance}, we have
\begin{equation}
    I_1=\textbf{1}_{k+k'=0}\textbf{1}_{m+m'=0}C(k)C(m)
\end{equation}
for some even function $C$. Hence, the contribution of $I_1$ is given by
\begin{align*}
    A_1
    =&\sum_{|i-j|\leq 1}\sum_{|i'-j'|\leq 1}
    \textbf{1}_{i\gtrsim l}\textbf{1}_{i'\gtrsim l}
    \sum_{k,m\in\m Z}
    \rho_l(0)^2\rho_i(k)\rho_j(-k)\rho_{i'}(m)\rho_{j'}(-m)\\
    &\times km C(k)C(m).
\end{align*}
Since $C$ and the Littlewood--Paley multipliers are even, we have
\begin{align*}
    \sum_{k\in\m Z}k\rho_i(k)\rho_j(-k)C(k)=0.
\end{align*}
Therefore,
\begin{equation}
    A_1=0.
\end{equation}
 For our convenience, define $I_{s,t}(\ml E_{\nu,k})=I_{s,t}^k$ and $II_{s,t}((\ml E_{\nu,k}))=II^k_{s,t}$ and assume that $0\leq s\leq t\leq T$. By Lemma \ref{le;covariance} with a directly computation, we obtain that
     \begin{align*}
            I_2=& \frac{1}{4}\textbf{1}_{k=m'}\textbf{1}_{k'=m}(II_{t,0}^k-II_{s,0}^k+I_{t,s}^k+II_{t,s}^k)(II_{t,0}^m-II_{s,0}^m+I_{t,s}^m+II_{t,s}^m)
        \end{align*}
        then by \eqref{eq;est;covariance}, for $\kappa\in(0,2\nu-2\nu\theta-2\nu\theta/\sigma-1)$, we have 
        \begin{align*}
        I_2\lesssim T^{4\nu-4\nu\theta-4\nu\theta/\sigma-2-2\kappa}|k|^{-4\theta-4\sigma\theta}|m|^{-4\theta-4\sigma\theta}|t-t'|^{2\kappa}
        \end{align*}
        For $I_3$, noting that $I_3=\textbf{1}_{k=m}\textbf{1}_{k'=m'}II_{t,0}^k(I_{t,s}^{k'}+II_{t,s}^{k'})$, by \eqref{eq;est;covariance}, for $\theta<\frac{\sigma(2\nu-1)}{2\nu(1+\sigma)}$ and $\epsilon\in(0,2\nu-2\nu\theta-2\nu\theta/\sigma-1]$we have
        \begin{align*}
            I_3\lesssim T^{2\nu-2\nu\theta-2\nu\theta/\sigma-1} |k|^{-4\theta-4\sigma\theta}T^{2\nu-2\nu\theta-2\nu\theta/\sigma-1-\epsilon} |k'|^{-4\theta-4\sigma\theta}|t-s|^\epsilon
        \end{align*}
        assume that $\kappa<\nu-\nu\theta-\nu\theta/\sigma-1/2$ and let $\epsilon=2\kappa$, combining above estimates, we have
      \begin{align*}
    I_2+I_3
    \lesssim
    T^{4\nu-4\nu\theta-4\nu\theta/\sigma-2-2\kappa}
    |k|^{-4\theta-4\sigma\theta}|k'|^{-4\theta-4\sigma\theta}|t-s|^{2\kappa}.
\end{align*}
Consequently,
\begin{align*}
    A_2+A_3
    \lesssim&
    \sum_{|i-j|\leq 1}\sum_{|i'-j'|\leq 1}
    \textbf{1}_{l\lesssim i}\textbf{1}_{l\lesssim i'}
    \sum_{k,k'\in\m Z}
    \rho_l(k+k')^2
    \rho_i(k)\rho_{i'}(k)
    \rho_j(k')\rho_{j'}(k')\\
    &\times
    |k|^{1-4\theta-4\sigma\theta}|k'|^{1-4\theta-4\sigma\theta}|t-s|^{2\kappa}\\
    \lesssim&
    \sum_{|i-j|\leq 1}
    \textbf{1}_{l\lesssim i}
    \sum_{k\in\operatorname{supp}\rho_i,\,
    k'\in\operatorname{supp}\rho_j}
    \rho_l(k+k')^2
    |k|^{1-4\theta-4\sigma\theta}|k'|^{1-4\theta-4\sigma\theta}
    |t-s|^{2\kappa}.
\end{align*}
Since $|i-j|\leq 1$ and $l\lesssim i$, we have $|k|\sim |k'|\sim 2^i$ on the support of the summand. Moreover, for each fixed $k$, the condition $\rho_l(k+k')\neq 0$ restricts $k'$ to a set of cardinality of order $2^l$. Hence,
\begin{align*}
    A_2+A_3
    \lesssim&
    \sum_{i\gtrsim l}
    2^i2^l2^{2i(1-4\theta-4\sigma\theta)}
    |t-s|^{2\kappa}  \lesssim
    \sum_{i\gtrsim l}
    2^{l+(3-8\theta-8\sigma\theta)i}
    |t-s|^{2\kappa}\lesssim |t-s|^{2\kappa}2^{(4-8\theta-8\sigma\theta)l}
\end{align*}
for $\theta\in(\frac{3}{8(1+\sigma)},\frac{\sigma(2\nu-1-2\kappa)}{2\nu(1+\sigma)})$, which is possible provided that $\sigma>3/4$ and $\nu>4\sigma/(8\sigma-3)$. By the same argument, we also have
\begin{align*}
    \m E[\vert \sum_{\vert i-j\vert \leq 1}&\textbf{1}_{i\gtrsim l}\Delta_l (\Delta_i\vartheta_{s,t}\Delta_j(\p_x\vartheta(s,\cdot)))\vert^2]
    \lesssim |t-s|^{2\kappa}2^{(4-8\theta-8\sigma\theta)l}.
\end{align*}
Therefore, for sufficiently small $\kappa>0$ and arbitrarily large $p\geq 1$, by Gaussian hypercontractivity, we obtain
\begin{equation}
    \m E[\Vert \Delta_l (\vartheta\circ\p_x\vartheta)_{s,t}\Vert^{2p}_{L^{2p}(\m T)}]
    \lesssim |t-s|^{2\kappa p}2^{(4-8\theta-8\sigma\theta)lp}.
\end{equation}
Hence, for any $\gamma<4\theta+4\sigma\theta-2$,
\begin{align*}
    \m E[\Vert( \vartheta\circ\p_x\vartheta)_{s,t}\Vert_{B_{2p,2p}^{\gamma}}^{2p}]
    =&\sum_{l\geq -1}2^{2p\gamma l}
    \m E[\Vert \Delta_l(\vartheta\circ\p_x\vartheta)_{s,t}\Vert_{L^{2p}(\m T)}^{2p}]\\
    \lesssim& |t-s|^{2\kappa p}
    \sum_{l\geq -1}2^{(2\gamma+4-8\theta-8\sigma\theta)lp}\\
    \lesssim& |t-s|^{2\kappa p}.
\end{align*}
Choosing $p$ sufficiently large such that $2\kappa p>1$, Kolmogorov's continuity criterion yields a continuous modification of $\vartheta\circ\p_x\vartheta$ with values in $B_{2p,2p}^{\gamma}$ for any $\gamma<4\theta+4\sigma\theta-2$. Taking $p$ sufficiently large and using the Besov embedding, we obtain
\[
    \vartheta\circ\p_x\vartheta\in C_T\ml C^{\gamma-\frac{1}{2p}}
\]
almost surely. Since $\theta$ can be chosen arbitrarily close to $\frac{\sigma(2\nu-1)}{2\nu(1+\sigma)}$, for every $\alpha<2\sigma-\sigma/\nu-1/2$, we may choose $\theta$, $\gamma$, and $p$ such that
\[
    2\alpha-1<\gamma-\frac{1}{2p}<4\theta+4\sigma\theta-2.
\]
Consequently,
\begin{equation}
    \vartheta\circ\p_x\vartheta\in C_T\ml C^{2\alpha-1}
\end{equation}
almost surely.
      
\end{proof}
\end{lemma}

\begin{remark}
As a consequence of Lemma \ref{le;area;1}, the family
$\vartheta^\varepsilon\circ\partial_x\vartheta^\varepsilon$ is uniformly bounded in probability as $\varepsilon\to0$. More precisely, for every $\delta>0$, there exists $\varepsilon_\delta>0$ such that, for all $0<\varepsilon<\varepsilon_\delta$,
\begin{align*}
    &\mathbb{P}\left(
        \left\|
            \vartheta^\varepsilon\circ\partial_x\vartheta^\varepsilon
        \right\|_{C_T\mathcal{C}^{2\alpha-1}}
        \geq
        1+
        \left\|
            \vartheta\circ\partial_x\vartheta
        \right\|_{C_T\mathcal{C}^{2\alpha-1}}
    \right)
    \\
    &\qquad\leq
    \mathbb{P}\left(
        \left\|
            \vartheta^\varepsilon\circ\partial_x\vartheta^\varepsilon
            -
            \vartheta\circ\partial_x\vartheta
        \right\|_{C_T\mathcal{C}^{2\alpha-1}}
        \geq 1
    \right)
    \\
    &\qquad\leq
    \mathbb{E}\left[
        \left\|
            \vartheta^\varepsilon\circ\partial_x\vartheta^\varepsilon
            -
            \vartheta\circ\partial_x\vartheta
        \right\|_{C_T\mathcal{C}^{2\alpha-1}}^p
    \right]
    <\delta,
\end{align*}
where the last inequality follows from Lemma \ref{le;area;1} by taking $\varepsilon_\delta>0$ sufficiently small. Consequently,
\begin{align}
    \mathbb{P}\left(
        \left\|
            \vartheta^\varepsilon\circ\partial_x\vartheta^\varepsilon
        \right\|_{C_T\mathcal{C}^{2\alpha-1}}
        \leq
        1+
        \left\|
            \vartheta\circ\partial_x\vartheta
        \right\|_{C_T\mathcal{C}^{2\alpha-1}}
    \right)
    \geq 1-\delta
    \label{eq:area-high-probability}
\end{align}
for every $0<\varepsilon<\varepsilon_\delta$.
In particular, the above estimate should be understood in the probabilistic sense: with arbitrarily high probability and uniformly for all sufficiently small $\varepsilon$,
\begin{align*}
    \left\|
        \vartheta^\varepsilon\circ\partial_x\vartheta^\varepsilon
    \right\|_{C_T\mathcal{C}^{2\alpha-1}}
    \leq
    1+
    \left\|
        \vartheta\circ\partial_x\vartheta
    \right\|_{C_T\mathcal{C}^{2\alpha-1}}.
\end{align*}
Equivalently,
\begin{align*}
    \lim_{\varepsilon_0\downarrow0}
    \sup_{0<\varepsilon<\varepsilon_0}
    \mathbb{P}\left(
        \left\|
            \vartheta^\varepsilon\circ\partial_x\vartheta^\varepsilon
        \right\|_{C_T\mathcal{C}^{2\alpha-1}}
        >
        1+
        \left\|
            \vartheta\circ\partial_x\vartheta
        \right\|_{C_T\mathcal{C}^{2\alpha-1}}
    \right)
    =0.
\end{align*}
\end{remark}
\subsection{Local well-posedness}
We now establish the well-posedness of \eqref{eq;Burgers type} by approximation. Let $L=\p_t^\nu-\Delta$, $N\in C_b^3$ and $\xi^\varepsilon$ be a smooth approximation of $\xi$. For each $\varepsilon>0$, the regularized equation
\begin{equation}
    L u^\varepsilon
    =
    \ml N(u^\varepsilon)\p_xu^\varepsilon+\xi^\varepsilon
\end{equation}
is a classical equation and admits a unique solution $u^\varepsilon$. To pass to the limit as $\varepsilon\to0$, it remains to derive estimates for $u^\varepsilon$ which are uniform in $\varepsilon$ and depend only on the enhanced data associated with $\vartheta^\varepsilon$. Following the strategy of \cite{gubinelli2015paracontrolled}, let $\vartheta^\varepsilon$ solve the corresponding linear equation and write
\begin{equation}
    w^\varepsilon=u^\varepsilon-\vartheta^\varepsilon.
\end{equation}
Then $w^\varepsilon$ satisfies
\begin{equation}
    Lw^\varepsilon
    =
    \ml N(\vartheta^\varepsilon+w^\varepsilon)
    \p_x(\vartheta^\varepsilon+w^\varepsilon).
\end{equation}
Let $\sigma\in(5/6,1]$ and $\nu\in(6\sigma/(12\sigma-5),1]$, $1/3<\alpha<2\sigma-\sigma/\nu-1/2$, by Proposition \ref{prop;regularity} and Lemma \ref{le;area;1}, $(\vartheta^\varepsilon,\vartheta^\varepsilon\circ\p_x\vartheta^\varepsilon)$ is uniformly bounded in $C_T\ml C^\alpha\times C_T\ml C^{2\alpha-1}$. We firstly show that the term
\begin{equation}
    \Phi^\varepsilon
    =
    \ml N(\vartheta^\varepsilon+w^\varepsilon)\p_x\vartheta^\varepsilon
\end{equation}
can be controlled by $w^\varepsilon$. Using the paraproduct decomposition,
\begin{align*}
    \Phi^\varepsilon
    =&
    \ml N(\vartheta^\varepsilon+w^\varepsilon)\prec\p_x\vartheta^\varepsilon
    +\ml N(\vartheta^\varepsilon+w^\varepsilon)\succ\p_x\vartheta^\varepsilon\\
    &+
    \ml N(\vartheta^\varepsilon+w^\varepsilon)\circ\p_x\vartheta^\varepsilon.
\end{align*}
The first two terms is uniformly bounded in $C_T\ml C^{\alpha-1}$ by the standard paraproduct estimates. The only singular contribution is the resonant term. By the paralinearization lemma, we have
\begin{align*}
    \ml N(\vartheta^\varepsilon+w^\varepsilon)
    =
    \ml N'(\vartheta^\varepsilon+w^\varepsilon)
    \prec\vartheta^\varepsilon
    +
    R_\ml N(\vartheta^\varepsilon+w^\varepsilon).
\end{align*}
Hence
\begin{align*}
\ml N(\vartheta^\varepsilon+w^\varepsilon)\circ\p_x\vartheta^\varepsilon=&\ml N'(\vartheta^\varepsilon+w^\varepsilon)\prec \vartheta^\varepsilon \circ\p_x\vartheta^\varepsilon+R_\ml N(\vartheta^\varepsilon+w^\varepsilon)\circ\p_x\vartheta^\varepsilon\\
    =&C(\ml N'(\vartheta^\varepsilon+w^\varepsilon),\vartheta^\varepsilon,\p_x\vartheta^\varepsilon)+\ml N'(\vartheta^\varepsilon+w^\varepsilon)\vartheta^\varepsilon\circ\p_x\vartheta^\varepsilon+R_\ml N(\vartheta^\varepsilon+w^\varepsilon)\circ\p_x\vartheta^\varepsilon.
\end{align*}
Since $3\alpha-1>0$, using Lemma \ref{le;est;paraproduct}, \ref{le;para-controlled}, and \ref{le;paralinearization}, we obtain
\begin{equation}\label{eq;Phi}
    \Vert\Phi^\varepsilon\Vert_{\alpha-1}
    \lesssim
    C_{\ml N}C_{\vartheta^\varepsilon}
    \left(
    1+\Vert w^\varepsilon\Vert_{\alpha+1}
    \right),
\end{equation}
where we set $C_{\ml N}=\Vert \ml N\Vert_{C_b^3}$
\begin{align*}
   C_{\vartheta^\varepsilon}=1+\Vert\vartheta^\varepsilon\Vert_{C_T\ml C^\alpha}^3+\Vert \vartheta^\varepsilon\Vert_{C_T\ml C^\alpha}\Vert \vartheta^\varepsilon\circ\p_x\vartheta^\varepsilon\Vert_{C_T\ml C^{2\alpha-1}}
\end{align*}
We next derive an a priori estimate for $w^\varepsilon$. Writing
\begin{equation}
    Lw^\varepsilon
    =
    \Phi^\varepsilon
    +
    \ml N(\vartheta^\varepsilon+w^\varepsilon)\p_xw^\varepsilon,
\end{equation}
the corresponding integral equation is
\begin{equation}
    w^\varepsilon(t)
    =
    U(t)u_0
    +
    \int_0^tV(t-s)\Phi^\varepsilon(s)\mathrm ds
    +
    \int_0^tV(t-s)
    \left(
    \ml N(\vartheta^\varepsilon(s)+w^\varepsilon(s))
    \p_xw^\varepsilon(s)
    \right)\mathrm ds.
\end{equation}
Applying the Schauder-type estimates for the time-fractional heat flow in Lemma \ref{le;est;fractional heat flow;linear}, we obtain
 \begin{align*}
     \Vert w^\varepsilon(t)\Vert_{\alpha-1+2\sigma}
     \lesssim& t^{-\nu(2\sigma-1)/(2\sigma)}(\Vert u_0\Vert_{\alpha}+\sup_{s\in[0,t]}s^{\nu(2\sigma-1)/(2\sigma)}\Vert \Phi^\varepsilon\Vert_{\alpha-1})\\
     &+\int_0^t \frac{\Vert \ml N(\vartheta^\varepsilon+w^\varepsilon)\p_xw^\varepsilon\Vert_{L^\infty}}{(t-s)^{1-\nu+\nu(\alpha-1+2\sigma)/(2\sigma)}}\mathrm ds
 \end{align*}
 Since $\alpha-2+2\sigma>0$,
\begin{align*}
    \Vert
    \ml N(\vartheta^\varepsilon+w^\varepsilon)
    \p_xw^\varepsilon
    \Vert_{L^\infty}
    \lesssim
    C_{\ml N}\Vert w^\varepsilon\Vert_{\alpha-1+2\sigma}.
\end{align*}
Therefore,
\begin{align*}
    t^{\nu(2\sigma-1)/(2\sigma)}
    &\int_0^t
    \frac{
    \Vert
    \ml N(\vartheta^\varepsilon+w^\varepsilon)
    \p_xw^\varepsilon
    \Vert_{L^\infty}}
    {(t-s)^{1-\nu+\nu(\alpha-1+2\sigma)/(2\sigma)}}
    \mathrm ds\\
    \lesssim&
    C_{\ml N}
    \left(
    \sup_{s\in[0,t]}
    s^{\nu(2\sigma-1)/(2\sigma)}
    \Vert w^\varepsilon(s)\Vert_{\alpha-1+2\sigma}
    \right)
    t^{\nu-\nu(\alpha-1+2\sigma)/(2\sigma)}.
\end{align*}
Combining the above estimates, we conclude that
\begin{align*}
    \sup_{s\in[0,t]}
    s^{\nu(2\sigma-1)/(2\sigma)}
    \Vert w^\varepsilon(s)\Vert_{\alpha-1+2\sigma}
    \lesssim&
    \Vert u_0\Vert_\alpha
    +
    C_{\ml N}C_{\vartheta^\varepsilon}
    \left(
    1+
    \sup_{s\in[0,t]}
    s^{\nu(2\sigma-1)/(2\sigma)}
    \Vert w^\varepsilon(s)\Vert_{\alpha-1+2\sigma}
    \right)
\end{align*}
Using a similar argument, we also obtain, uniformly in $t\in[0,T]$,
\begin{align}
    \|w^\varepsilon(t)\|_\alpha
    \lesssim
    \|u_0\|_\alpha
    +
    C_{\ml N}C_{\vartheta^\varepsilon}
    \left(
        1+
        \sup_{s\in(0,t]}
         s^{\nu(2\sigma-1)/(2\sigma)}\|w^\varepsilon(s)\|_{\alpha-1+2\sigma}
    \right).
\end{align}
Define $C_\vartheta=$
We now use a scaling argument to close the above estimates. Since the following argument is carried out for each fixed $\varepsilon>0$, we suppress the superscript $\varepsilon$ throughout and write
\[
    (u,w,\vartheta,\Phi)
    =
    (u^\varepsilon,w^\varepsilon,\vartheta^\varepsilon,\Phi^\varepsilon)
\]
for simplicity. The dependence on $\varepsilon$ will be restored at the end of the argument.

For $\lambda\in(0,1]$, define
\begin{align*}
    \Lambda_\lambda u(t,x)
    :=
    u(\lambda^{2\sigma/\nu}t,\lambda x).
\end{align*}
Then
\begin{align*}
    L\Lambda_\lambda w
    =
    \lambda^{2\sigma}\Lambda_\lambda Lw.
\end{align*}
Set
\begin{align*}
    w^\lambda=\Lambda_\lambda w,
    \qquad
    \vartheta^\lambda=\Lambda_\lambda\vartheta.
\end{align*}
Since
\begin{align*}
    Lw
    =
    \Phi
    +
    \ml N(\vartheta+w)\p_xw,
\end{align*}
we obtain
\begin{align*}
    Lw^\lambda
    &=
    \lambda^{2\sigma}\Lambda_\lambda\Phi
    +
    \lambda^{2\sigma-1}
    \ml N(\vartheta^\lambda+w^\lambda)
    \p_xw^\lambda.
\end{align*}

By \cite[Lemmas A.4 and B.1]{gubinelli2015paracontrolled}, the scaling properties of Besov norms and resonant products yield
\begin{align*}
    C_{\vartheta^\lambda}
    \lesssim
    \lambda^{-1}C_\vartheta,
    \qquad
    \lambda\in(0,1],
\end{align*}
where $C_{\vartheta^\lambda}$ denotes the control quantity associated with the rescaled noise $\vartheta^\lambda$. Moreover, the same argument as in the derivation of \eqref{eq;Phi} gives
\begin{align*}
    \|\Lambda_\lambda\Phi(t)\|_{\alpha-1}
    \lesssim
    C_{\ml N}C_{\vartheta^\lambda}
    \left(
        1+\|w^\lambda(t)\|_{\alpha-1+2\sigma}
    \right).
\end{align*}
Consequently,
\begin{align*}
    \lambda^{2\sigma}
    \|\Lambda_\lambda\Phi(t)\|_{\alpha-1}
    \lesssim
    \lambda^{2\sigma-1} C_{\ml N}C_\vartheta
    \left(
        1+\|w^\lambda(t)\|_{\alpha-1+2\sigma}
    \right).
\end{align*}
Combining this estimate with the contribution of
$\lambda^{2\sigma-1}\ml N(\vartheta^\lambda+w^\lambda)\p_xw^\lambda$, we obtain
\begin{align*}
    \sup_{s\in(0,t]}
    s^{\nu(2\sigma-1)/(2\sigma)}\|w^\lambda(s)\|_{\alpha-1+2\sigma}
    \lesssim&
    \|\Lambda_\lambda u_0\|_\alpha
    \\
    &+
    \lambda^{2\sigma-1} C_{\ml N}(1+C_\vartheta)
    \left(
        1+
        \sup_{s\in(0,t]}
        s^{\nu(2\sigma-1)/(2\sigma)}\|w^\lambda(s)\|_{\alpha-1+2\sigma}
    \right).
\end{align*}
Since $\lambda\in(0,1]$ and $\alpha>0$, we have
\begin{align*}
    \|\Lambda_\lambda u_0\|_\alpha
    \sim
    \lambda^\alpha\|u_0\|_\alpha.
\end{align*}
Choosing $\lambda>0$ sufficiently small, depending only on
$C_{\ml N}$ and $C_\vartheta$, the last term can be absorbed into the left-hand side. Hence,
\begin{equation}\label{eq;est;scaling}
    \sup_{s\in(0,T]}
    s^{\nu(2\sigma-1)/(2\sigma)}\|w^\lambda(s)\|_{\alpha-1+2\sigma}
    \lesssim
    \|u_0^\lambda\|_\alpha+1.
\end{equation}
Together with the corresponding estimate in $C_T\ml C^\alpha$, this gives
\begin{equation}\label{eq;w;uniform}
    \|w^\lambda\|_{C_T\ml C^\alpha}
    +
    \sup_{s\in(0,T]}
    s^{\nu(2\sigma-1)/(2\sigma)}\|w^\lambda(s)\|_{\alpha-1+2\sigma}
    \lesssim
    \|u_0^\lambda\|_\alpha+1.
\end{equation}
Scaling back gives that
\begin{align*}
    \|w\|_{C_{\lambda^{2\sigma/\nu }T}\ml C^\alpha}
    +
    \sup_{s\in(0,\lambda^{2\sigma/\nu }T]}
    s^{\nu(2\sigma-1)/(2\sigma)}\|w(s)\|_{\alpha-1+2\sigma}
    \lesssim
    \|u_0\|_\alpha+C_\lambda.
\end{align*}
since $u=w+\vartheta$, we get 
\begin{align*}
    \sup_{t\in[0,\lambda ^{2\sigma/\nu}T]}\Vert u\Vert_\alpha\lesssim\Vert u_0\Vert_\alpha+C_{\vartheta,\ml N}
\end{align*}
We now restore the superscript $\varepsilon$. By the above estimate, the family $(u^\varepsilon)_{\varepsilon>0}$ admits a uniform bound in $C_T\mathcal C^\alpha$ with arbitrarily high probability for all sufficiently small $\varepsilon$. Since, for $\mathcal N\in C_b^3$, the solution map
\[
    (u_0,\vartheta,\vartheta\circ\partial_x\vartheta)
    \longmapsto u
\]
is locally Lipschitz continuous on bounded sets, and the enhanced noises
\[
    \bigl(
        \vartheta^\varepsilon,
        \vartheta^\varepsilon\circ\partial_x\vartheta^\varepsilon
    \bigr)
\]
form a Cauchy sequence in $C_T\ml C^{\alpha}\times C_T\ml C^{2\alpha-1}$, we conclude that $(u^\varepsilon)_{\varepsilon>0}$ is a Cauchy sequence in $C_T\mathcal C^\alpha$ in probability.
\begin{theorem}\label{thm;main;local;Burgers}
Let $\sigma\in(5/6,1]$, $\nu\in(6\sigma/(12\sigma-5),1]$ and $1/3<\alpha<2\sigma-\sigma/\nu-1/2$. Let $G\in C_b^3$ and $u_0\in\mathcal C^\alpha$. Let $(\xi^\varepsilon)_{\varepsilon>0}$ be a family of smooth approximations of $\xi$. For each $\varepsilon>0$, let $u^\varepsilon$ be the classical solution to the regularized equation
\begin{equation}
    Lu^\varepsilon
    =
    \mathcal N(u^\varepsilon)\partial_xu^\varepsilon+\xi^\varepsilon,
    \qquad
    u^\varepsilon(0)=u_0.
\end{equation}
Then there exists $T>0$ such that the equation
\begin{equation}
    Lu
    =
    \mathcal N(u)\partial_xu+\xi,
    \qquad
    u(0)=u_0,
\end{equation}
admits a unique weak solution $u$ on $[0,T]$, in the following sense:
\begin{equation}
    u^\varepsilon\longrightarrow u
    \qquad
    \text{in probability in } C_T\mathcal C^\alpha.
\end{equation}
\end{theorem}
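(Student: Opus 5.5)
The plan is to realise the solution as a continuous function of the enhanced data $\mathbb X=(\vartheta,\vartheta\circ\partial_x\vartheta)$ and then pass to the limit in probability.

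\textbf{Step 1: the fixed-point space and map.} For a fixed enhanced pair $\mathbb X\in C_T\mathcal C^\alpha\times C_T\mathcal C^{2\alpha-1}$, set $\gamma=\nu(2\sigma-1)/(2\sigma)<1$ and work in
\[
\mathcal W_T=\Bigl\{w:\ \|w\|_{\mathcal W_T}:=\|w\|_{C_T\mathcal C^\alpha}+\sup_{s\in(0,T]}s^{\gamma}\|w(s)\|_{\alpha-1+2\sigma}<\infty\Bigr\}.
\]
On this space I would consider the map
\[
\Gamma_{\mathbb X}(w)(t)=U(t)u_0+\int_0^tV(t-s)\bigl[\Phi(w,\mathbb X)(s)+\mathcal N(\vartheta+w)\partial_xw(s)\bigr]\,\mathrm ds .
\]
Here $\Phi(w,\mathbb X)$ is defined through the decomposition already used for $\Phi^\varepsilon$. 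The singular resonant term is replaced by
\[
C(\mathcal N'(\vartheta+w),\vartheta,\partial_x\vartheta)+\mathcal N'(\vartheta+w)\,(\vartheta\circ\partial_x\vartheta)+R_{\mathcal N}(\vartheta+w)\circ\partial_x\vartheta ,
\]
which makes sense because $3\alpha-1>0$, by Lemmas \ref{le;para-controlled} and \ref{le;paralinearization}. For smooth $\vartheta$ this coincides with the classical product, so $w^\varepsilon=u^\varepsilon-\vartheta^\varepsilon$ is a fixed point of $\Gamma_{\mathbb X^\varepsilon}$.

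\textbf{Step 2: self-map and contraction.} The bound \eqref{eq;Phi} together with the Schauder estimates of Lemma \ref{le;est;Schauder-type} gives
\[
\|\Gamma_{\mathbb X}(w)\|_{\mathcal W_T}\lesssim \|u_0\|_\alpha+C_{\mathcal N}C_\vartheta\, T^{\kappa}\bigl(1+\|w\|_{\mathcal W_T}\bigr)
\]
for some $\kappa>0$. The small power of $T$ comes from the spare time-integrability, namely the exponent computation $\nu-\nu(\alpha-1+2\sigma)/(2\sigma)>0$ already done above, and from the gap $2\sigma$ versus $1-\alpha+\alpha-1+2\sigma$ in the $C_T\mathcal C^\alpha$ part. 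Alternatively I would invoke the scaling argument \eqref{eq;est;scaling}.

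For the difference estimate, $\mathcal N\in C_b^3$ gives the Lipschitz parts of Lemmas \ref{le;para-differential calculus} and \ref{le;paralinearization}, with $\beta$ close to $\alpha$. This yields
\[
\|\Gamma_{\mathbb X}(w)-\Gamma_{\mathbb Y}(\tilde w)\|_{\mathcal W_T}\lesssim C_R\,T^\kappa\bigl(\|w-\tilde w\|_{\mathcal W_T}+\|\mathbb X-\mathbb Y\|\bigr)
\]
on balls of radius $R$. Choosing $T$ small, depending on $R$, $\|u_0\|_\alpha$ and the size of $\mathbb X$, makes $\Gamma_{\mathbb X}$ a contraction on a ball. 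This gives a unique fixed point $w=S(u_0,\mathbb X)$, and the map $S$ is locally Lipschitz in $\mathbb X$.

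\textbf{Step 3: passing to the limit.} By Proposition \ref{prop;regularity} and Lemma \ref{le;area;1}, $\mathbb X^\varepsilon\to\mathbb X$ in $L^p$, and hence in probability, in $C_T\mathcal C^\alpha\times C_T\mathcal C^{2\alpha-1}$. Fix $\delta>0$. By the Remark (estimate \eqref{eq:area-high-probability}), with probability at least $1-\delta$ the norms of all $\mathbb X^\varepsilon$ with $\varepsilon$ small are bounded by $1+\|\mathbb X\|$. On that event a common existence time $T$ works for all of them, and
\[
\|u^\varepsilon-u\|_{C_T\mathcal C^\alpha}\le \|\vartheta^\varepsilon-\vartheta\|_{C_T\mathcal C^\alpha}+C\,\|\mathbb X^\varepsilon-\mathbb X\| \to 0 .
\]
This gives convergence in probability to $u=\vartheta+S(u_0,\mathbb X)$. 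Since $T$ depends on $\|\mathbb X\|$, it is random; I would state it as a stopping-type random time or work on events of high probability. The limit does not depend on the choice of mollifier, because it is determined by $\mathbb X$ alone; this is the sense of uniqueness.

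\textbf{Main obstacle.} I expect the difference/contraction estimate in Step 2 to be the hardest part. The $\Phi$ term only gains through paraproduct commutators, and in the time-weighted norm the required smallness must come from the finite smoothing of $V$ (at most $4\sigma$). I would first check that the exponent bookkeeping, in particular $\alpha-2+2\sigma>0$ and $\alpha>1/3$, leaves a positive power of $T$. If it does not, I would instead obtain smallness via the scaling map $\Lambda_\lambda$, as in \eqref{eq;w;uniform}.
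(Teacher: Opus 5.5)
Your route is sound, and it differs from the paper's mainly in organization. The paper never sets up a fixed point for the limiting equation. It proves $\varepsilon$-uniform a priori bounds for $w^\varepsilon=u^\varepsilon-\vartheta^\varepsilon$ in exactly your norm $\mathcal W_T$, getting smallness from the scaling $\Lambda_\lambda$ (factor $\lambda^{2\sigma-1}$). It then simply asserts that the solution map $(u_0,\vartheta,\vartheta\circ\partial_x\vartheta)\mapsto u$ is locally Lipschitz on bounded sets; together with the Cauchy property of the enhanced data, this gives convergence in probability. Your contraction on $\mathcal W_T$ constructs that map and its Lipschitz dependence on $\mathbb X$ explicitly, so it supplies exactly the step the paper leaves unproved. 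Working on balls also makes you insensitive to the fact that $R_{\mathcal N}(\vartheta+w)$ enters superlinearly in $\|w\|_\alpha$ (Lemma~\ref{le;paralinearization} with $\beta>1-2\alpha>0$). By contrast, the paper's absorption step, with $\lambda$ chosen independently of $u_0$, leans on the linear form of \eqref{eq;Phi}. The paper's a priori formulation is set up so that it can be iterated for the global result.

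Two points need repair.

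\textbf{The missing resonant term.} Your replacement of the resonant term drops $\bigl(\mathcal N'(\vartheta+w)\prec w\bigr)\circ\partial_x\vartheta$. With $R_{\mathcal N}(f)=\mathcal N(f)-\mathcal N'(f)\prec f$ and $f=\vartheta+w$, one has $\mathcal N(f)=\mathcal N'(f)\prec\vartheta+\mathcal N'(f)\prec w+R_{\mathcal N}(f)$. So, as written, $\Phi(w,\mathbb X^\varepsilon)$ is not the classical product, and $w^\varepsilon$ is not a fixed point of $\Gamma_{\mathbb X^\varepsilon}$; the paper's displayed decomposition has the same slip. Adding the term is harmless, since $(\alpha-1+2\sigma)+(\alpha-1)>0$. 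It is also exactly where a norm of $w$ above $\mathcal C^\alpha$ enters \eqref{eq;Phi}.

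\textbf{The claimed power of $T$.} The exponent $\nu-\nu(\alpha-1+2\sigma)/(2\sigma)>0$ only produces a power of $T$ for the term $\mathcal N(\vartheta+w)\partial_xw$. The $\Phi$-contribution is controlled by $\|w(s)\|_{\alpha-1+2\sigma}\lesssim s^{-\gamma}\|w\|_{\mathcal W_T}$, and for it both parts of your norm are critical:
\begin{itemize}
\item the weighted estimate of Lemma~\ref{le;est;Schauder-type} uses the full gain $2\sigma$ with matching weight, so it carries no power of $T$;
\item in $C_T\mathcal C^\alpha$ you get $\int_0^t(t-s)^{\gamma-1}s^{-\gamma}\,\mathrm ds=B(\gamma,1-\gamma)$, again with no power of $T$.
\end{itemize}
The gap you invoke, $2\sigma$ versus $1-\alpha+\alpha-1+2\sigma$, is zero.

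A power of $T$ is still available, because $\alpha+2\sigma>2$ strictly. Lower the target regularity to $\alpha-1+2\sigma-\epsilon$ (still $>1$) and use the weight $\nu(2\sigma-1-\epsilon)/(2\sigma)$. This yields a factor $T^{\nu\epsilon/(2\sigma)}$ in the $\Phi$-contribution, in both parts of the norm. Alternatively, your scaling fallback, which is the paper's own mechanism, closes the argument.
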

\subsection{Global well-posedness}
In \cite{gubinelli2015paracontrolled}, Gubinelli et al. establish global well-posedness by successively extending the solution from a short time interval to the whole interval $[0,T]$. The key point is that the scaling parameter $\lambda$ can be chosen independently of the initial condition $u_0$, so that the same local existence time can be used after each restart. Moreover, the semigroup property of
\[
    P(t)=e^{-t(-\Delta)^\sigma}
\]
allows the equation to be restarted at any positive time. Indeed, suppose that $w$ satisfies
\[
    w(t)
    =
    P(t)u_0
    +
    \int_0^tP(t-s)\Psi(w(s))\,\mathrm ds.
\]
Fix $t>0$ and let $\widetilde w$ be the solution starting from $w(t)$,
\[
    \widetilde w(t_0)
    =
    P(t_0)w(t)
    +
    \int_0^{t_0}P(t_0-s)\Psi(\widetilde w(s))\,\mathrm ds.
\]
By the semigroup property, the time-shifted solution $w(t+\cdot)$ satisfies the same equation with initial value $w(t)$. Hence, by uniqueness,
\[
    \widetilde w(t_0)=w(t+t_0).
\]
Since the choice of $\lambda$, and hence the length of the local existence interval, is independent of the initial condition, this argument can be iterated to extend the solution to $[0,T]$.

In the present setting, however, $U$ and $V$ do not satisfy the semigroup property, and hence the solution cannot be restarted exactly. We instead to control the increment $w_\lambda(t+T)-w_\lambda(T)$. 
Subtracting the mild formulations at times $T+t$ and $T$,
we obtain
\begin{align*}
    w_\lambda(T+t)-w_\lambda(T)
    ={}&
    [U(T+t)-U(T)]u_0^\lambda\\
    &+
    \int_0^T
    [V(T+t-s)-V(T-s)]
    \Bigl(
        \Phi_\lambda(s)
        +
        \ml N(\vartheta_\lambda(s)+w_\lambda(s))
        \p_x w_\lambda(s)
    \Bigr)\,\mathrm ds\\
    &+
    \int_0^t V(t-s)
    \Bigl(
        \Phi_\lambda(T+s)
        +
        \ml N(\vartheta_\lambda(T+s)+w_\lambda(T+s))
        \p_x w_\lambda(T+s)
    \Bigr)\,\mathrm ds\\
    =:{}&
    R_T^1(t)+R_T^2(t)+R_T^3(t).
\end{align*}
The increment estimates for $U$ give
\[
    \sup_{t\in(0,T]}t^\gamma
    \|R_T^1(t)\|_{\alpha-1+2\sigma}
    +
    \|R_T^1\|_{C_T\ml C^\alpha}
    \lesssim
    1+\|u_0^\lambda\|_\alpha.
\]
By Lemma~\ref{le;est;truncated-Schauder} and
\eqref{eq;w;uniform}, the history term satisfies
\[
    \sup_{t\in(0,T]}t^\gamma
    \|R_T^2(t)\|_{\alpha-1+2\sigma}
    +
    \|R_T^2\|_{C_T\ml C^\alpha}
    \lesssim
    \lambda^{2\sigma-1}C_{\ml N}(1+C_\vartheta)
    (1+\|u_0^\lambda\|_\alpha).
\]
For the integral over the new interval, the local
Schauder estimates yield
\[
    \begin{aligned}
        &\sup_{t\in(0,T]}t^\gamma
        \|R_T^3(t)\|_{\alpha-1+2\sigma}
        +
        \|R_T^3\|_{C_T\ml C^\alpha}\\
        &\qquad\lesssim
        \lambda^{2\sigma-1}C_{\ml N}(1+C_\vartheta)
        \left(
            1+
            \sup_{s\in(0,T]}s^\gamma
            \|w_\lambda(T+s)\|_{\alpha-1+2\sigma}
        \right).
    \end{aligned}
\]
Moreover, the bound on the preceding interval implies
\begin{align*}
    &\sup_{s\in(0,T]}s^\gamma
    \|w_\lambda(T+s)\|_{\alpha-1+2\sigma}\\
    &\qquad\leq
    \sup_{s\in(0,T]}s^\gamma
    \|w_\lambda(T+s)-w_\lambda(T)\|_{\alpha-1+2\sigma}
    +
    T^\gamma\|w_\lambda(T)\|_{\alpha-1+2\sigma}\\
    &\qquad\lesssim
    \sup_{s\in(0,T]}s^\gamma
    \|w_\lambda(T+s)-w_\lambda(T)\|_{\alpha-1+2\sigma}
    +
    1+\|u_0^\lambda\|_\alpha.
\end{align*}
Combining the preceding estimates, we obtain
\begin{align*}
    &\sup_{t\in(0,T]}t^\gamma
    \|w_\lambda(T+t)-w_\lambda(T)\|_{\alpha-1+2\sigma}
    +
    \|w_\lambda(T+\cdot)-w_\lambda(T)\|_{C_T\ml C^\alpha}\\
    &\qquad\lesssim
    1+\|u_0^\lambda\|_\alpha\\
    &\qquad\quad+
    \lambda^{2\sigma-1}C_{\ml N}(1+C_\vartheta)
    \left(
        1+\|u_0^\lambda\|_\alpha
        +
        \sup_{s\in(0,T]}s^\gamma
        \|w_\lambda(T+s)-w_\lambda(T)\|_{\alpha-1+2\sigma}
    \right).
\end{align*}
By our choice of $\lambda$, the last term involving
the increment can be absorbed into the left-hand side.
Consequently,
\[
    \begin{aligned}
        &\sup_{t\in(0,T]}t^\gamma
        \|w_\lambda(T+t)-w_\lambda(T)\|_{\alpha-1+2\sigma}\\
        &\qquad+
        \|w_\lambda(T+\cdot)-w_\lambda(T)\|_{C_T\ml C^\alpha}
        \lesssim
        1+\|u_0^\lambda\|_\alpha.
    \end{aligned}
\]
Together with the bounds on $[0,T]$, this yields
\[
    \sup_{t\in(0,T]}t^\gamma
    \|w_\lambda(T+t)\|_{\alpha-1+2\sigma}
    +
    \|w_\lambda(T+\cdot)\|_{C_T\ml C^\alpha}
    \lesssim
    1+\|u_0^\lambda\|_\alpha.
\]
Hence 
\begin{align*}
    \Vert w_\lambda(t)\Vert_{C_{2T}\ml C^\alpha}\lesssim 1+\Vert u_0^\lambda\Vert_\alpha
\end{align*}
Thus, the estimates extend from $[0,T]$ to $[0,2T]$.
Iterating this argument over successive intervals yields
the uniform bound
\begin{equation}
    \|w_\lambda\|_{C_{\lambda^{-2\sigma/\nu}T}\ml C^\alpha}
    \lesssim 1+\|u_0^\lambda\|_\alpha.
\end{equation}
Scaling back, we obtain
\begin{equation}
    \|w\|_{C_T\ml C^\alpha}
    \lesssim 1+\|u_0\|_\alpha.
\end{equation}
Together with the decomposition $u=\vartheta+w$ and the
bounds for $\vartheta$, this provides the a priori
control required to extend the local solution globally.
We summarize the result in the following theorem.

\begin{theorem}[Global existence]\label{thm;main;global;Burgers}
    The solution constructed in
    Theorem~\ref{thm;main;local;Burgers}
    extends uniquely to a global-in-time solution.
\end{theorem}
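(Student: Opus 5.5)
The plan is to turn the iterated a priori bound derived just above into a continuation argument. On the probability-one event where $\vartheta\in C_{T'}\mathcal C^\alpha$ and $\vartheta\circ\partial_x\vartheta\in C_{T'}\mathcal C^{2\alpha-1}$, the rescaled local problem will be solved successively on the blocks $[kT,(k+1)T]$. Here $T'>0$ is an arbitrary final time, and the local time is $T$, with $\lambda$ chosen in terms of $C_{\mathcal N}$ and $C_\vartheta$ only, exactly as in the proof of Theorem~\ref{thm;main;local;Burgers}. The key point is that $\lambda$, and hence the block length, does not depend on $u_0$. This will allow the argument to be repeated a fixed finite number $\lceil \lambda^{-2\sigma/\nu}T'/T\rceil$ of times.

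\textbf{Existence on the next block.} On $[kT,(k+1)T]$ the mild formulation will be written as
\[
w_\lambda(kT+t)=U(kT+t)u_0^\lambda+H_k(t)+\int_0^tV(t-s)\Psi(w_\lambda)(kT+s)\,\mathrm ds ,
\]
where the history term $H_k(t)=\int_0^{kT}V(kT+t-s)\Psi(w_\lambda)(s)\,\mathrm ds$ is already determined by the solution on $[0,kT]$. By Lemma~\ref{le;est;truncated-Schauder} and Lemma~\ref{le;est;Schauder-type}, both $U(kT+\cdot)u_0^\lambda$ and $H_k$ are bounded in the weighted space
\[
\sup_t t^\gamma\|\cdot\|_{\alpha-1+2\sigma}+\|\cdot\|_{C\mathcal C^\alpha},
\]
with bounds controlled by the estimate on the previous blocks. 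The map is then a fixed-point problem of the same form as the local one, in which a known forcing is added. The contraction constant is again $\lambda^{2\sigma-1}C_{\mathcal N}(1+C_\vartheta)$, using the Lipschitz versions of Lemmas~\ref{le;para-differential calculus}--\ref{le;para-controlled}. Therefore the same $\lambda$ gives existence and uniqueness on the new block.

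\textbf{Gluing and uniqueness.} Gluing the block solutions gives a solution of the full Duhamel equation on $[0,(k+1)T]$. The increment bound derived before the statement shows that the norm after $k$ steps is $\lesssim_k 1+\|u_0^\lambda\|_\alpha$. This is finite, because the number of steps is fixed. Uniqueness of the global solution will follow block by block: two solutions agree on $[0,kT]$, hence have the same history term $H_k$, hence agree on the next block by the contraction. Scaling back with $\Lambda_\lambda^{-1}$ yields $w$ on $[0,T']$, and $u=\vartheta+w\in C_{T'}\mathcal C^\alpha$. The approximation statement carries over unchanged. Since the bound is uniform in $\varepsilon$ on the high-probability event \eqref{eq:area-high-probability}, and the solution map is locally Lipschitz in the enhanced noise (including through the history terms, which depend Lipschitz-continuously on earlier blocks), $u^\varepsilon\to u$ in probability in $C_{T'}\mathcal C^\alpha$. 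Since $T'$ is arbitrary, this gives the global solution.

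\textbf{Main obstacle.} The hard step is verifying that the history term $H_k$ does not spoil the contraction or create growth that prevents iteration. Near $t=0$ on the new block, $H_k$ must be bounded in $C\mathcal C^\alpha$ and in the weighted norm without the singular weight $s^{-\gamma}$ reappearing. I would first try the block estimate \eqref{eq;truncated-Schauder-block}. It gives
\[
(kT+t)^\gamma\|H_k(t)\|_{\alpha-1+2\sigma}\lesssim \sup_{s\le kT}s^\gamma\|\Psi(s)\|_{\alpha-1},
\]
and since $kT+t\ge T$, this is a regular, non-singular bound. For the time-H\"older part I would use \eqref{eq;truncated-holder}. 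The remaining risk is that the constants multiply at each step. Because the number of steps is finite for fixed $T'$, I only need finiteness, not uniformity in $k$.
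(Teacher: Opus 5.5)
Your overall route is the paper's. You fix a data-independent $\lambda$, march over blocks of fixed rescaled length, and control the memory explicitly through Lemma~\ref{le;est;truncated-Schauder}. You bound $w_\lambda(kT+\cdot)$ directly, with $H_k$ handled by \eqref{eq;truncated-Schauder-block}, whereas the paper bounds the increment $w_\lambda(T+\cdot)-w_\lambda(T)$ through $R^1_T+R^2_T+R^3_T$; that difference is harmless.

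The step that does not hold is existence on each block. You claim the contraction constant is again $\lambda^{2\sigma-1}C_{\mathcal N}(1+C_\vartheta)$. That constant governs only the \emph{growth} bounds, namely \eqref{eq;Phi} and $\|\mathcal N(\vartheta+w)\partial_xw\|_{L^\infty}\lesssim C_{\mathcal N}\|w\|_{\alpha-1+2\sigma}$. It does not govern differences. For instance,
\[
\mathcal N(\vartheta+w_1)\partial_xw_1-\mathcal N(\vartheta+w_2)\partial_xw_2
=\mathcal N(\vartheta+w_1)\partial_x(w_1-w_2)
+\bigl[\mathcal N(\vartheta+w_1)-\mathcal N(\vartheta+w_2)\bigr]\partial_xw_2,
\]
and the last term is of size $\|\mathcal N'\|_{L^\infty}\|w_1-w_2\|_{L^\infty}\|w_2\|_{\alpha-1+2\sigma}$. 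Likewise, the Lipschitz bounds in Lemmas~\ref{le;paralinearization} and~\ref{le;para-differential calculus}, and the Lipschitz bound for $w\mapsto\mathcal N(\vartheta+w)$ in $\mathcal C^\alpha$, carry factors $(1+\|f\|_\alpha+\|g\|_\alpha)^{1+\beta/\alpha}$. So the contraction forces $\lambda$, and hence the block length, to depend on the radius of the ball, which depends on $\|u_0^\lambda\|_\alpha$ and on $H_k$. Combined with your remark that bounds may grow with $k$ and that ``finiteness suffices'', this breaks the argument. If the radius grows from block to block, the admissible step shrinks and the steps need not cover $[0,T']$.

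The repair is to decouple the two roles of the scaling parameter. The paper's proof does this implicitly: it proves only a priori bounds and then appeals to the local theory.

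First, use your block scheme purely as an a priori estimate. With the data-independent $\lambda$ and the linear growth bounds, any solution on any $[0,S]\subset[0,T']$ obeys a bound $R=R(\|u_0\|_\alpha,C_{\mathcal N},C_\vartheta,T')$. Here the fixed number $\lceil\lambda^{-2\sigma/\nu}T'/T\rceil$ of blocks does make finiteness sufficient.

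Only then choose the existence step $\tau=\tau(R)$, for example via a second scaling parameter depending on $R$. Solve the fixed point with forcing $U(t_0+\cdot)u_0+H$ in a ball of radius comparable to $R$. The a priori bound keeps every glued solution in that ball, so the same $\tau$ works at every restart, and your block-by-block uniqueness argument goes through with this $\tau$.

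One smaller point: on blocks $k\ge1$ the forcing $U(kT+\cdot)u_0^\lambda+H_k$ is regular at $t=0$. Run the fixed point there without the restarted weight $t^\gamma$, or with the global weight $(kT+t)^\gamma$. A bound on $\sup_t t^\gamma\|w_\lambda(kT+t)\|_{\alpha-1+2\sigma}$ does not control the global weight $s^\gamma$ that \eqref{eq;truncated-Schauder-block} needs when you form $H_{k+1}$.
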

\section{Time-fractional heat equation with multiplicative noise}\label{sec;time-fractional}
In this section, we consider the Time-fractional heat equation with multiplicative noise on $\m T^2$
\begin{equation}\label{eq;Anderson model}
    \p_t^\nu u-\Delta u=F(u)\xi.
\end{equation}
where $\xi$ is a spatial white noise. Similarly, we firstly construct the area 
\subsection{Construction of
\texorpdfstring{$\vartheta\diamond\xi$}{theta diamond xi}}
Let $\alpha\in(2/3,1)$ and let $\delta>0$ be such that
$\alpha+\delta<1$. In dimension $d=2$, we have
\begin{equation}
    \xi\in\mathcal C^{\alpha+\delta-2}.
\end{equation}
Let $\Pi$ denote the projection onto the nonzero Fourier modes, so that
\begin{equation}
    \xi=\Pi\xi+\mathcal F(\xi)(0).
\end{equation}
Consider the linear convolution
\begin{equation}
    \vartheta
    :=
    \int_0^\infty V(t)\Pi\xi\,\mathrm dt.
\end{equation}
Then, for every $t>0$,
\begin{align}\label{eq;difference}
    \vartheta-I\xi(t)
    &=
    \int_t^\infty V(\tau)\Pi\xi\,\mathrm d\tau
    -
    \int_0^t V(\tau)\mathcal F(\xi)(0)\,\mathrm d\tau.
\end{align}

By Lemma~\ref{le;est;fractional heat flow;linear},
\begin{equation}
    \|V(\tau)\Pi\xi\|_{2\alpha}
    \lesssim
    \tau^{-1-\nu\alpha/2}
    \|\Pi\xi\|_{\alpha-2}.
\end{equation}
Consequently, for every $t>0$,
\begin{align}
    t^{\nu\alpha/2}
    \left\|
        \int_t^\infty V(\tau)\Pi\xi\,\mathrm d\tau
    \right\|_{2\alpha}
    &\le
    t^{\nu\alpha/2}
    \int_t^\infty
    \|V(\tau)\Pi\xi\|_{2\alpha}\,\mathrm d\tau
    \notag\\
    &\lesssim
    t^{\nu\alpha/2}
    \|\Pi\xi\|_{\alpha-2}
    \int_t^\infty
    \tau^{-1-\nu\alpha/2}\,\mathrm d\tau
    \notag\\
    &\lesssim
    \|\Pi\xi\|_{\alpha-2}.
\end{align}
Since
\begin{equation}
    \mathcal C^{\alpha+\delta-2}
    \hookrightarrow
    \mathcal C^{\alpha-2},
\end{equation}
we further have
\begin{equation}
    \|\Pi\xi\|_{\alpha-2}
    \lesssim
    \|\xi\|_{\alpha+\delta-2}.
\end{equation}

On the other hand, since
\begin{equation}
    V(\tau)\mathcal F(\xi)(0)
    =
    \tau^{\nu-1}\phi(0)\mathcal F(\xi)(0),
\end{equation}
we obtain
\begin{equation}
    \int_0^t
    V(\tau)\mathcal F(\xi)(0)\,\mathrm d\tau
    =
    \frac{t^\nu}{\nu}
    \phi(0)\mathcal F(\xi)(0).
\end{equation}
The right-hand side is spatially constant and hence belongs to
$\mathcal C^\beta$ for every $\beta\in\mathbb R$. Therefore, for
$t\in(0,T]$,
\begin{align}
    t^{\nu\alpha/2}
    \|\vartheta-I\xi(t)\|_{2\alpha}
    &\lesssim
    \|\xi\|_{\alpha+\delta-2}
    +
    t^{\nu(1+\alpha/2)}
    |\mathcal F(\xi)(0)|.
\end{align}
In particular, for fixed $T>0$,
\begin{equation}\label{eq:weighted-vartheta}
    \sup_{t\in(0,T]}
    t^{\nu\alpha/2}
    \|\vartheta-I\xi(t)\|_{2\alpha}
    \lesssim_T
    \|\xi\|_{\alpha+\delta-2}
    +
    |\mathcal F(\xi)(0)|.
\end{equation}

Moreover, using \eqref{eq;difference} together with
Lemma~\ref{le;est;fractional heat flow;linear}, we obtain
\begin{equation}\label{eq:vartheta-C-alpha}
    \|\vartheta-I\xi\|_{C_T\mathcal C^\alpha}
    \lesssim_T
    1+\|\xi\|_{\alpha+\delta-2},
\end{equation}
where the contribution of the zero Fourier mode is harmless.

Finally, since
\begin{equation}
    \frac{\nu\alpha}{2}
    <
    \nu+\frac{\nu(\alpha+\delta-2)}{2}
    =
    \frac{\nu(\alpha+\delta)}{2},
\end{equation}
Lemma~\ref{le;est;Schauder-type} yields
\begin{equation}
    \|I\xi\|_{C_T^{\nu\alpha/2}L^\infty}
    \lesssim_T
    \|\xi\|_{\alpha+\delta-2}.
\end{equation}
Since $\vartheta$ is independent of time, it follows that
\begin{equation}
    \|\vartheta-I\xi\|_{C_T^{\nu\alpha/2}L^\infty}
    =
    \|I\xi\|_{C_T^{\nu\alpha/2}L^\infty}
    \lesssim_T
    \|\xi\|_{\alpha+\delta-2}.
\end{equation}

Combining the above estimates, we conclude that
\begin{equation}\label{eq;est;theta-Ixi}
    \sup_{t\in(0,T]}
    t^{\nu\alpha/2}
    \|\vartheta-I\xi(t)\|_{2\alpha}
    +
    \|\vartheta-I\xi\|_{\mathcal L_T^{\alpha,\nu}}
    \lesssim_T
    1+\|\xi\|_{\alpha+\delta-2}.
\end{equation}

\begin{lemma}\label{le;renormalization constant}
    Let
    \begin{equation}
        g_t
        =
        \m E\left[
        V(t)\Pi\xi\circ\xi(x)
        \right].
    \end{equation}
    Then $g_t$ is independent of $x$ and
    \begin{equation}
        g_t
        =
        (2\pi)^{-2}
        \sum_{k\in\m Z_0^2}
        t^{\nu-1}
        \phi(t^{\nu/2}|k|).
    \end{equation}

    Let $\psi\in\ml S$ satisfy
    \begin{equation}
        \int_{\m R^2}\psi(x)\mathrm dx=1,
    \end{equation}
    and define the mollified noise $\xi^\varepsilon$ and the corresponding
    renormalization constant by
    \begin{align*}
        \xi^\varepsilon
        &=
        \psi^\varepsilon*\xi,\\
        c_\varepsilon
        &=
        \int_0^\infty
        \m E\left[
        V(t)\Pi\xi^\varepsilon\circ\xi^\varepsilon(x)
        \right]
        \mathrm dt.
    \end{align*}
    Then
    \begin{equation}
        c_\varepsilon
        =
        (2\pi)^{-2}
        \sum_{k\in\m Z_0^2}
        |k|^{-2}
        |\widehat\psi(\varepsilon k)|^2.
    \end{equation}
    In particular, $c_\varepsilon$ diverges logarithmically as
    $\varepsilon\to0$.

    \begin{proof}
        Choosing $\sigma=1$, recalling that
        \begin{equation}
            V(t)
            =
            t^{\nu-1}\phi(t^{\nu/2}|D|),
        \end{equation}
        the Fourier representation and the covariance of $\xi$ yield
        \begin{align*}
            g_t
            &=
            \m E\left[
            \Delta_{-1}
            (V(t)\Pi\xi\circ\xi)(x)
            \right]\\
            &=
            (2\pi)^{-2}
            \sum_{k\in\m Z_0^2}
            t^{\nu-1}
            \phi(t^{\nu/2}|k|).
        \end{align*}
        In particular, $g_t$ does not depend on $x$.

        Similarly, for the mollified noise,
        \begin{align*}
            \m E\left[
            V(t)\Pi\xi^\varepsilon
            \circ\xi^\varepsilon(x)
            \right]
            =
            (2\pi)^{-2}
            \sum_{k\in\m Z_0^2}
            t^{\nu-1}
            \phi(t^{\nu/2}|k|)
            |\widehat\psi(\varepsilon k)|^2.
        \end{align*}
        Therefore,
        \begin{align*}
            c_\varepsilon
            &=
            (2\pi)^{-2}
            \sum_{k\in\m Z_0^2}
            \left(
            \int_0^\infty
            t^{\nu-1}
            \phi(t^{\nu/2}|k|)
            \mathrm dt
            \right)
            |\widehat\psi(\varepsilon k)|^2\\
            &=
            (2\pi)^{-2}
            \sum_{k\in\m Z_0^2}
            |k|^{-2}
            |\widehat\psi(\varepsilon k)|^2,
        \end{align*}
        where we used
        \begin{equation}
            \int_0^\infty
            t^{\nu-1}
            \phi(t^{\nu/2}|k|)
            \mathrm dt
            =
            |k|^{-2}.
        \end{equation}
        Since
        \begin{equation}
            \sum_{0<|k|\lesssim\varepsilon^{-1}}
            |k|^{-2}
            \sim
            \log\frac1\varepsilon
        \end{equation}
        in dimension two, the constants $c_\varepsilon$ diverge
        logarithmically as $\varepsilon\to0$.
    \end{proof}
\end{lemma}

By the relation $U'(t)=\Delta V(t)$, for every $k\neq0$ we have
\begin{align*}
    |k|^2t^{\nu-1}\phi(t^{\nu/2}|k|)
    =
    -\p_t\widehat U(t,k).
\end{align*}
Therefore,
\begin{align*}
    \int_0^\infty
    t^{\nu-1}\phi(t^{\nu/2}|k|)
    \mathrm dt
    &=
    -|k|^{-2}
    \int_0^\infty
    \p_t\widehat U(t,k)
    \mathrm dt\\
    &=
    |k|^{-2},
\end{align*}
where we used $\widehat U(0,k)=1$ and
$\widehat U(t,k)\to0$ as $t\to\infty$.
Although the time-fractional heat kernel has only finite smoothing for each fixed $t>0$, the time-integrated kernel still gains two spatial derivatives. Indeed, for every $k\neq0$,
\begin{align*}
    \int_0^\infty
    t^{\nu-1}\phi(t^{\nu/2}|k|)
    \mathrm dt
    =
    |k|^{-2}.
\end{align*}
Consequently,
\begin{align*}
    \widehat{\vartheta}(k)
    =
    |k|^{-2}\widehat{\xi}(k),
    \qquad k\neq0,
\end{align*}
and hence
\begin{equation}
    -\Delta\vartheta=\Pi\xi.
\end{equation}
In particular, if $\xi\in\ml C^{\alpha-2}$, then
\begin{equation}
    \vartheta\in\ml C^\alpha.
\end{equation}

\begin{lemma}\label{le;area;2}
    Set
    \begin{align*}
        (\vartheta\diamond \xi)
        =
        \int_0^\infty
        \left(
        V(t)\Pi\xi\circ\xi-g_t
        \right)
        \mathrm dt.
    \end{align*}
    Then, for every $\alpha\in(1/2,1)$, $\delta>0$ satisfying $\alpha+\delta<1$ and $p>1$,
    \begin{equation}
        \m E\left[
        \Vert\vartheta\diamond\xi\Vert_{2{\alpha+2\delta}-2}^{p}
        \right]
        <\infty.
    \end{equation}
Moreover, let $\xi^\varepsilon=\varepsilon^{-2}\psi(\varepsilon)\ast\xi$ for $\varepsilon>0$, and $\vartheta^\varepsilon=\int_0^\infty V(t)\Pi\xi^\varepsilon\mathrm dt$. Then for every $\alpha\in(1/2,1)$, $\delta>0$ satisfying $\alpha+\delta<1$ and $p>1$, we have 
\begin{equation}
\lim_{\varepsilon\to0}\Vert \vartheta^\varepsilon\circ\xi^\varepsilon-c_\varepsilon-\vartheta\diamond\xi\Vert_{2\alpha+2\delta-2}=0
\end{equation}
almost surely and 
    \begin{proof}
        Let $t\in(0,1]$, and define
        \begin{equation}
            \Xi_t
            =
            V(t)\Pi\xi\circ\xi-g_t.
        \end{equation}
        Since $\Xi_t$ belongs to the second homogeneous Wiener chaos, Gaussian hypercontractivity yields, for every $p\geq1$ and $l\geq-1$,
        \begin{equation}\label{eq;Gaussian hypercontractivity}
            \m E\left[
            \Vert\Delta_l\Xi_t\Vert_{L^{2p}}^{2p}
            \right]
            \lesssim_p
            \left\Vert
            \m E\left[
            |\Delta_l\Xi_t(x)|^2
            \right]
            \right\Vert_{L^p}^p.
        \end{equation}
        Since $g_t$ is deterministic and spatially constant, for $l\geq0$ we have
        \begin{equation}
            \m E\left[
            |\Delta_l\Xi_t(x)|^2
            \right]
            =
            \mathrm{Var}
            \left(
            \Delta_l(V(t)\Pi\xi\circ\xi)(x)
            \right).
        \end{equation}
        By Wick's formula and the support properties of the Littlewood--Paley blocks,
        \begin{align*}
            \mathrm{Var}
            \left(
            \Delta_l(V(t)\Pi\xi\circ\xi)(x)
            \right)
            \lesssim
            \sum_{i\gtrsim l}
            2^{2i}2^{2l}
            t^{2\nu-2}
            \phi^2(t^{\nu/2}2^i).
        \end{align*}
        Using Lemma \ref{le;decay}, for every $\theta\in[0,1]$,
        \begin{align*}
            \mathrm{Var}
            \left(
            \Delta_l(V(t)\Pi\xi\circ\xi)(x)
            \right)
            \lesssim
            2^{2(1-\theta)l}
            t^{\nu(1-\theta)-2}.
        \end{align*}
        Therefore, by \eqref{eq;Gaussian hypercontractivity},
        \begin{align*}
            \left(
            \m E\left[
            \Vert\Delta_l\Xi_t\Vert_{L^{2p}}^{2p}
            \right]
            \right)^{1/(2p)}
            \lesssim_p
            2^{(1-\theta)l}
            t^{-1+\frac{\nu}{2}(1-\theta)}.
        \end{align*}

        It follows that
        \begin{align*}
            \left(
            \m E\left[
            \Vert\Xi_t\Vert_{B_{2p,2p}^{2\alpha+2\delta-2}}^{2p}
            \right]
            \right)^{1/(2p)}
            &\lesssim_p
            t^{-1+\frac{\nu}{2}(1-\theta)}
            \left(
            \sum_{l\geq-1}
            2^{2p(2\alpha-1-\theta)l}
            \right)^{1/(2p)}.
        \end{align*}
        Since $\alpha+\delta<1$, we may choose
        \begin{equation}
            2\alpha+2\delta-1<\theta<1.
        \end{equation}
        Hence the dyadic sum is finite and
        \begin{equation}
            \left(
            \m E\left[
            \Vert\Xi_t\Vert_{B_{2p,2p}^{2\alpha+2\delta-2}}^{2p}
            \right]
            \right)^{1/(2p)}
            \lesssim_p
            t^{-1+\frac{\nu}{2}(1-\theta)}.
        \end{equation}
        Since $\theta<1$,
        \begin{equation}
            -1+\frac{\nu}{2}(1-\theta)>-1,
        \end{equation}
        and consequently
        \begin{align*}
            \int_0^1
            \left(
            \m E\left[
            \Vert\Xi_t\Vert_{B_{2p,2p}^{2\alpha+2\delta-2}}^{2p}
            \right]
            \right)^{1/(2p)}
            \mathrm dt
            <\infty.
        \end{align*}
        By Minkowski's inequality,
        \begin{align*}
            \left(
            \m E\left[
            \left\Vert
            \int_0^1\Xi_t\mathrm dt
            \right\Vert_{B_{2p,2p}^{2\alpha+2\delta-2}}^{2p}
            \right]
            \right)^{1/(2p)}
            &\leq
            \int_0^1
            \left(
            \m E\left[
            \Vert\Xi_t\Vert_{B_{2p,2p}^{2\alpha+2\delta-2}}^{2p}
            \right]
            \right)^{1/(2p)}
            \mathrm dt\\
            &<\infty.
        \end{align*}
        For $t\geq1$, using again Lemma \ref{le;decay}, the multiplier
        \begin{equation}
            t^{\nu-1}\phi(t^{\nu/2}|k|)
        \end{equation}
        has sufficient decay in $t$ and $k$, and the same argument gives
        \begin{equation}
            \int_1^\infty
            \left(
            \m E\left[
            \Vert\Xi_t\Vert_{B_{2p,2p}^{2\alpha+2\delta-2}}^{2p}
            \right]
            \right)^{1/(2p)}
            \mathrm dt
            <\infty.
        \end{equation}
        We thus obtain
        \begin{equation}
            \m E\left[
            \left\Vert
            \int_0^\infty\Xi_t\mathrm dt
            \right\Vert_{B_{2p,2p}^{2\alpha+2\delta-2}}^{2p}
            \right]
            <\infty.
        \end{equation}
        Finally, choosing $p$ sufficiently large and using the Besov embedding
        \begin{equation}
            B_{2p,2p}^{2\alpha+2\delta-2}
            \subset
            \ml C^{2\alpha+2\delta-2-\frac1p},
        \end{equation}
        we obtain, after replacing $\alpha$ by a slightly larger exponent if necessary,
        \begin{equation}
            \m E\left[
            \Vert\vartheta\diamond\xi\Vert_{2\alpha+2\delta-2}^{p}
            \right]
            <\infty.
        \end{equation}
        The convergence is easily follows from the dominated convergence theorem and Besov embedding.
    \end{proof}
\end{lemma}

\subsection{Local well-posedness}
We now establish the well-posedness of \eqref{eq;Anderson model} by approximation. Let $\xi^\epsilon$ be a smooth approximation of $\xi$, and let $\vartheta^\epsilon=\int_0^\infty V(t)\Pi\xi^\epsilon\mathrm dt$. For each $\epsilon>0$, the regularized equation with renormalization 
\begin{equation}
    Lu^\varepsilon=F(u^\varepsilon)\xi^\varepsilon-c_{\varepsilon}F'(u^\varepsilon)F(u^\varepsilon)
\end{equation}
where $c_{\varepsilon}$ is the renormalized constant in Lemma \ref{le;renormalization constant}. Let $u^{\sharp,\varepsilon}$ satisfy
\begin{equation}\label{eq;paracontrolled ansatz}
    u^\varepsilon=F(u^\varepsilon)\prec\vartheta^\varepsilon+u^{\sharp,\varepsilon}
\end{equation}
and define
\begin{equation}
    \Phi^{\sharp,\varepsilon}
    =
    F(u^\varepsilon)\xi^\varepsilon
    -
    F(u^\varepsilon)\prec\xi^\varepsilon
    -
    c_\varepsilon F'(u^\varepsilon)F(u^\varepsilon).
\end{equation}
then $u^{\sharp,\varepsilon}$ satisfies the equation

\begin{equation}
    u^{\sharp,\varepsilon}
    =U(t)u_0+
    I(\Phi^{\sharp,\varepsilon})
    +
    I(F(u^\varepsilon)\prec\xi^\varepsilon)
    -
    F(u^\varepsilon)\prec\vartheta^\varepsilon.
\end{equation}
Let $\nu\in(0,1]$, $\alpha\in(2/3,1)$, $\delta>0$ satisfy $\alpha+\delta<1$, then by Lemma \ref{le;area;2}, $(\xi^\varepsilon,\vartheta^\varepsilon,\vartheta^\varepsilon\circ\xi^\varepsilon-c_\varepsilon)$ is uniformly bounded in 
$$\ml C^{\alpha+\delta-2}\times \ml C^{\alpha+\delta}\times \ml C^{2\alpha+2\delta-2}.$$
 We first estimate $\Phi^{\sharp,\varepsilon}$. By the paraproduct decomposition,
\begin{align*}
    \Phi^{\sharp,\varepsilon}
    =&
    F(u^\varepsilon)\succ\xi^\varepsilon
    +
    F(u^\varepsilon)\circ\xi^\varepsilon
    -
    c_\varepsilon F'(u^\varepsilon)F(u^\varepsilon).
\end{align*}
By the paralinearization of $F(u^\varepsilon)$ and \eqref{eq;paracontrolled ansatz}, we have
\begin{equation}
    F(u^\varepsilon)
   =
   F'(u^\varepsilon)\prec(F(u^\varepsilon)\prec \vartheta^\varepsilon)
   +
   F'(u^\varepsilon)\prec u^{\sharp,\varepsilon}
   +
   R_F(u^\varepsilon).
\end{equation}
Consequently,
\begin{align*}
    F(u^\varepsilon)\circ\xi^\varepsilon
    =&
    \left(
    F'(u^\varepsilon)\prec(F(u^\varepsilon)\prec \vartheta^\varepsilon)
    \right)\circ\xi^\varepsilon\\
    &+
    \left(
    F'(u^\varepsilon)\prec u^{\sharp,\varepsilon}
    \right)\circ\xi^\varepsilon
    +
    R_F(u^\varepsilon)\circ\xi^\varepsilon.
\end{align*}
Using the commutator identity only for the singular paraproduct term, we obtain
\begin{align*}
    F(u^\varepsilon)\circ\xi^\varepsilon
    =&
    C(F'(u^\varepsilon),F(u^\varepsilon)\prec\vartheta^\varepsilon,\xi^\varepsilon)
    +
    F'(u^\varepsilon)
    \left(
    (F(u^\varepsilon)\prec\vartheta^\varepsilon)\circ\xi^\varepsilon
    \right)\\
    &+
    \left(
    F'(u^\varepsilon)\prec u^{\sharp,\varepsilon}
    \right)\circ\xi^\varepsilon
    +
    R_F(u^\varepsilon)\circ\xi^\varepsilon.
\end{align*}
Therefore,
\begin{align*}
    \Phi^{\sharp,\varepsilon}
    =&
    F(u^\varepsilon)\succ\xi^\varepsilon
    +
    C(F'(u^\varepsilon),F(u^\varepsilon)\prec\vartheta^\varepsilon,\xi^\varepsilon)\\
    &+
    F'(u^\varepsilon)
    C(F(u^\varepsilon),\vartheta^\varepsilon,\xi^\varepsilon)
    +
    \left(
    F'(u^\varepsilon)\prec u^{\sharp,\varepsilon}
    \right)\circ\xi^\varepsilon\\
    &+
    R_F(u^\varepsilon)\circ\xi^\varepsilon
    +
    F'(u^\varepsilon)F(u^\varepsilon)
    \left(
    \vartheta^\varepsilon\circ\xi^\varepsilon-c_\varepsilon
    \right).
\end{align*}

We next derive estimates for the above terms that are uniform in $\varepsilon$.  
Lemma \ref{le;est;paraproduct} gives
\begin{align*}
    \Vert F(u^\varepsilon)\succ\xi^\varepsilon\Vert_{2\alpha-2}
    \lesssim&
    \Vert F(u^\varepsilon)\Vert_{\alpha}
    \Vert\xi^\varepsilon\Vert_{\alpha-2}\\
    \lesssim&
    \Vert F\Vert_{C_b^1}
    \left(
    1+\Vert u^\varepsilon\Vert_\alpha
    \right)
    \Vert\xi^\varepsilon\Vert_{\alpha-2}.
\end{align*}
and
\begin{align*}
    \Vert \left(
    F'(u^\varepsilon)\prec u^{\sharp,\varepsilon}
    \right)\circ\xi^\varepsilon\Vert_{3\alpha-2}\lesssim \Vert F'\Vert_{L^\infty}\Vert u^{\sharp,\varepsilon}\Vert_{2\alpha}\Vert\xi^\varepsilon\Vert_{\alpha-2}
\end{align*}
By Lemma \ref{le;para-controlled},
\begin{align*}
    \Vert
    C(F'(u^\varepsilon),F(u^\varepsilon)\prec\vartheta^\varepsilon,\xi^\varepsilon)
    \Vert_{3\alpha-2}
   \lesssim
   \Vert F'\Vert_{C_b^1}\Vert F\Vert_{L^\infty}\Vert u^\varepsilon\Vert_\alpha\Vert \vartheta^\varepsilon\Vert_\alpha\Vert \xi^\varepsilon\Vert_{\alpha-2}.
\end{align*}
Lemma \ref{le;paralinearization} yields
\begin{align*}
    \Vert R_F(u^\varepsilon)\Vert_{2\alpha}
    \lesssim
    \Vert F\Vert_{C_b^2}
    \left(
    1+\Vert u^\varepsilon\Vert_\alpha^2
    \right).
\end{align*}
Since $\alpha>2/3$, we have $3\alpha-2>0$, and hence the resonant product is well-controlled with
\begin{align*}
    \Vert R_F(u^\varepsilon)\circ\xi^\varepsilon\Vert_{3\alpha-2}
    \lesssim&
    \Vert R_F(u^\varepsilon)\Vert_{2\alpha}
    \Vert\xi^\varepsilon\Vert_{\alpha-2}\\
    \lesssim&
    \Vert F\Vert_{C_b^2}
    \left(
    1+\Vert u^\varepsilon\Vert_\alpha^2
    \right)
    \Vert\xi^\varepsilon\Vert_{\alpha-2}.
\end{align*}
while the renormalized resonant term satisfies
\begin{align*}
    &
    \left\Vert
    F'(u^\varepsilon)F(u^\varepsilon)
    \left(
    \vartheta^\varepsilon\circ\xi^\varepsilon-c_\varepsilon
    \right)
    \right\Vert_{2\alpha-2}\\
    &\qquad\lesssim
    C_F
    \left(
    1+\Vert u^\varepsilon\Vert_\alpha
    \right)
    \left\Vert
    \vartheta^\varepsilon\circ\xi^\varepsilon-c_\varepsilon
    \right\Vert_{2\alpha-2}.
\end{align*}
Combining the above estimates and using the uniform bounds on the enhanced noise, we obtain
\begin{equation}\label{eq;Phi;sharp}
    \Vert\Phi^{\sharp,\varepsilon}\Vert_{2\alpha-2}
    \lesssim
    C_F C_{\xi}
    \left(
    1
    +
    \Vert u^\varepsilon\Vert_\alpha^2
    +
    \Vert u^{\sharp,\varepsilon}\Vert_{2\alpha}
    \right),
\end{equation}
where $C_F=\Vert F\Vert_{C_b^2}(1+\Vert F\Vert_{C_b^2}^2)$ and 
\begin{equation}
    C_{\xi^\varepsilon}=(1+\Vert \xi^\varepsilon\Vert_{\alpha+\delta-2})^3+\Vert\vartheta^\varepsilon\circ\xi^\varepsilon-c_\varepsilon\Vert_{2\alpha+2\delta-2}
\end{equation}
\par Next, we derive the uniform bounded for $u^{\sharp,\varepsilon}$, noting that 
\begin{align*}
    I(F(u^\varepsilon)\prec \xi^\varepsilon)-F(u^\varepsilon)\prec \vartheta^\varepsilon=[I(F(u^\varepsilon)\prec \xi^\varepsilon)-F(u^\varepsilon)\prec I\xi^\varepsilon]+F(u^\varepsilon)\prec (I\xi^\varepsilon-\vartheta^\varepsilon)
\end{align*}
For the first term, by Lemma \ref{le;commutator;heat flow} and Lemma \ref{le;est;Schauder-type}, we have 
\begin{equation}
    \Vert I(F(u^\varepsilon)\prec \xi^\varepsilon)-F(u^\varepsilon)\prec I\xi^\varepsilon\Vert_{2\alpha}\lesssim \Vert F(u^\varepsilon)\Vert_{\ml L_T^{\alpha,\nu}}\Vert \xi^\varepsilon\Vert_{\alpha+\delta-2}.
\end{equation}
and 
\begin{equation}
    \Vert I(F(u^\varepsilon)\prec \xi^\varepsilon)-F(u^\varepsilon)\prec I\xi^\varepsilon\Vert_{\ml L_T^{\alpha,\nu}}\lesssim \Vert F(u^\varepsilon)\Vert_{\ml L_T^{\alpha,\nu}}\Vert \xi^\varepsilon\Vert_{\alpha+\delta-2}
\end{equation}
Combining with \eqref{eq;est;theta-Ixi}, by $ \Vert F(u^\varepsilon)\Vert_{\ml L_T^{\alpha,\nu}}
    \lesssim
    \Vert F\Vert_{C_b^1}\Vert u^\varepsilon\Vert_{\ml L_T^{\alpha,\nu}}$, it then follows that for $t\in[0,T]$,
\begin{align*}
    t^{\nu\alpha/2}\Vert I(F(u^\varepsilon)\prec \xi^\varepsilon)-F(u^\varepsilon)\prec I\xi^\varepsilon\Vert_{2\alpha}
    +&
    \Vert I(F(u^\varepsilon)\prec \xi^\varepsilon)-F(u^\varepsilon)\prec I\xi^\varepsilon\Vert_{\ml L_T^{\alpha,\nu}}\\
    \lesssim &
    \Vert F\Vert_{C^1_b}\Vert u^\varepsilon\Vert_{\ml L_T^{\alpha,\nu}}\Vert \xi^\varepsilon\Vert_{\alpha+\delta-2}.
\end{align*}

Consequently, by Lemma \ref{le;est;Schauder-type}, we have
\begin{align*}
    t^{\nu\alpha/2}\Vert u^{\sharp,\varepsilon}\Vert_{2\alpha}\lesssim&
    \Vert u_0\Vert_{\alpha}
    +
    t^{\nu\alpha/2} \Vert I(F(u^\varepsilon)\prec \xi^\varepsilon)
    -
    F(u^\varepsilon)\prec \vartheta^\varepsilon\Vert_{2\alpha}
    +
    t^{\nu\alpha/2}\Vert I(\Phi^{\sharp,\varepsilon})\Vert_{2\alpha}\\
    \lesssim&
    \Vert u_0\Vert_\alpha
    +t^{\nu\alpha/2}(\Vert F\Vert_{C_b^1}\Vert u^\varepsilon\Vert_{\ml L_T^{\alpha,\nu}}\Vert \xi^\varepsilon\Vert_{\alpha+\delta-2} )+\sup_{s\in[0,t]}s^{\nu\alpha/2}\Vert \Phi^{\sharp,\varepsilon}\Vert_{2\alpha-2}\\
    \lesssim&
    \Vert u_0\Vert_\alpha +C_FC_{\xi^\varepsilon} \Vert u^\varepsilon\Vert_{\ml L_T^{\alpha,\nu}}+C_FC_{\xi^\varepsilon}(1+\Vert u^\varepsilon\Vert_{C_T\ml C^\alpha}^2 +\sup_{s\in[0,t]}s^{\nu\alpha/2} \Vert u^{\sharp,\varepsilon}\Vert_{2\alpha} )
\end{align*}
and since 
\begin{align*}
    \Vert I\Phi^{\sharp,\varepsilon}(t)\Vert_{C_t^{\nu\alpha/2}L^\infty}\leq \sup_{s\in[0,t]}s^{\nu\alpha/2} \Vert \Phi^{\sharp,\varepsilon}\Vert_{2\alpha-2}
\end{align*}
then 
\begin{align*}
   \Vert u^{\sharp,\varepsilon}\Vert_{\ml L_T^{\alpha,\nu}}
   \lesssim& 
   \Vert u_0\Vert_\alpha+\sup_{t\in[0,T]}\Vert \Phi^{\sharp,\varepsilon}(t)\Vert_{2\alpha-2}
   +
   \Vert I(F(u^\varepsilon)\prec\xi^\varepsilon)-F(u^\varepsilon)\prec\vartheta^\varepsilon\Vert_{\ml L_T^{\alpha,\nu}}\\
   \lesssim&
   \Vert u_0\Vert_\alpha+C_FC_{\xi^\varepsilon}(1+\Vert u^\varepsilon\Vert_\alpha^2
   +
   C_FC_{\xi^\varepsilon} \Vert u^\varepsilon\Vert_{\ml L_T^{\alpha,\nu}}
   +
   \sup_{t\in[0,T]}t^{\nu\alpha/2}\Vert u^{\sharp,\epsilon}(t)\Vert_{2\alpha})
\end{align*}

uniformly in $\varepsilon$. Combining above estimates, we have
\begin{align*}
    \sup_{t\in[0,T]}t^{\nu\alpha/2}\Vert u^{\sharp,\varepsilon}\Vert_{2\alpha}
    +
    \Vert u^{\sharp,\varepsilon}\Vert_{\ml L_T^{\alpha+\nu}}
    \lesssim 
    \Vert u_0\Vert_\alpha
    +
     C_FC_{\xi^\varepsilon}(1+\Vert u^\varepsilon\Vert_\alpha^2
   +
 \Vert u^\varepsilon\Vert_{\ml L_T^{\alpha,\nu}}
   +
   \sup_{t\in[0,T]}t^{\nu\alpha/2}\Vert u^{\sharp,\epsilon}(t)\Vert_{2\alpha})
\end{align*}

For $\Vert u^\varepsilon\Vert_{C_T^{\nu\alpha/2}L^\infty}$, we have 
\begin{equation}
\begin{aligned}
      \Vert u^\varepsilon\Vert_{\ml L_T^{\alpha,\nu}}
      \leq&
      \Vert F(u^\varepsilon)\prec\vartheta^\varepsilon\Vert_{\ml L_T^{\alpha,\nu}}
      +
      \Vert u^{\sharp,\varepsilon}\Vert_{\ml L_T^{\alpha,\nu}}\\
      \lesssim& 
      C_FC_{\xi^\varepsilon}\Vert u^\varepsilon\Vert_{\ml L_T^{\alpha,\nu}}
      +
        \Vert u^{\sharp,\varepsilon}\Vert_{\ml L_T^{\alpha,\nu}}
\end{aligned}
\end{equation}
To closed the estimates, we need the scaling argument, using the similar argument, setting $\Lambda_\lambda f(t,x)=f(\lambda^{2/\nu}t,\lambda x)$, so that $L\Lambda_\lambda=\lambda^2\Lambda_\lambda L$.Now let $u^\lambda=\Lambda_\lambda u$, $u_0^\lambda=\Lambda_\lambda u_0$,
$\xi^\lambda=\lambda^{2-\alpha}\Lambda_\lambda\xi$, and
$F_\lambda=\lambda^\alpha F$. Then
\begin{equation}
    Lu_\lambda^\varepsilon
    =
    F_\lambda(u_\lambda^\varepsilon)\xi_\lambda^\varepsilon
    -
    c_\varepsilon^\lambda
    F_\lambda'(u_\lambda^\varepsilon)F_\lambda(u_\lambda^\varepsilon),
\end{equation}
where
\begin{align*}
    c_\varepsilon^\lambda
    =
    \lambda^{2-2\alpha}c_\varepsilon.
\end{align*}

and using Lemma ,we have
Let
\begin{align*}
    u_\lambda^\varepsilon=\Lambda_\lambda u^\varepsilon,
    \qquad
    \xi_\lambda^\varepsilon
    =
    \lambda^{2-\alpha}\Lambda_\lambda\xi^\varepsilon,
    \qquad
    F_\lambda=\lambda^\alpha F.
\end{align*}
Then $u_\lambda^\varepsilon$ satisfies
\begin{align*}
    Lu_\lambda^\varepsilon
    =
    F_\lambda(u_\lambda^\varepsilon)\xi_\lambda^\varepsilon
    -
    c_\varepsilon^\lambda
    F_\lambda'(u_\lambda^\varepsilon)
    F_\lambda(u_\lambda^\varepsilon),
\end{align*}
where
\begin{align*}
    c_\varepsilon^\lambda
    =
    \lambda^{2-2\alpha}c_\varepsilon.
\end{align*}
Repeating the previous estimates for the rescaled
equation, and using
\begin{align*}
    \Vert F_\lambda\Vert_{C_b^k}
    \lesssim
    \lambda^\alpha C_F,
\end{align*}
we obtain
\begin{equation}\label{eq;bounded;scaling}
    \begin{aligned}
    &\sup_{t\in(0,T]}t^{\nu\alpha/2}
    \Vert u_\lambda^{\sharp,\varepsilon}(t)\Vert_{2\alpha}
    +
    \Vert u_\lambda^{\sharp,\varepsilon}\Vert_{\ml L_T^{\alpha,\nu}}
    \\
    &\qquad\lesssim
    \Vert u_0^\lambda\Vert_\alpha
    +
    \lambda^\alpha C_FC_{\xi^{\varepsilon}_\lambda}
    \left(
        1+
        \Vert u_\lambda^\varepsilon\Vert_\alpha^2
        +
        \Vert u_\lambda^\varepsilon\Vert_{\ml L_T^{\alpha,\nu}}
        +
        \sup_{t\in(0,T]}t^{\nu\alpha/2}
        \Vert u_\lambda^{\sharp,\varepsilon}(t)\Vert_{2\alpha}
    \right).
\end{aligned}
\end{equation}

On the other hand, the paracontrolled decomposition
\begin{align*}
    u_\lambda^\varepsilon
    =
    F_\lambda(u_\lambda^\varepsilon)
    \prec\vartheta_\lambda^\varepsilon
    +
    u_\lambda^{\sharp,\varepsilon}
\end{align*}
and the usual paraproduct and composition estimates yield
\begin{align*}
    \Vert u_\lambda^\varepsilon\Vert_{\ml L_T^{\alpha,\nu}}
    &\lesssim
    \Vert
    F_\lambda(u_\lambda^\varepsilon)
    \prec\vartheta_\lambda^\varepsilon
    \Vert_{\ml L_T^{\alpha,\nu}}
    +
    \Vert u_\lambda^{\sharp,\varepsilon}
    \Vert_{\ml L_T^{\alpha,\nu}}
    \\
    &\lesssim
    \lambda^\alpha C_FC_{\xi^{\varepsilon}_\lambda}
    \left(
        1+
        \Vert u_\lambda^\varepsilon
        \Vert_{\ml L_T^{\alpha,\nu}}
    \right)
    +
    \Vert u_\lambda^{\sharp,\varepsilon}
    \Vert_{\ml L_T^{\alpha,\nu}}.
\end{align*}
Since $\Vert u_0^\lambda\Vert_\alpha\sim \lambda^\alpha\Vert u_0\Vert_\alpha $ and $\Vert \xi^\varepsilon_\lambda\Vert_\alpha\sim \Vert \xi^\varepsilon\Vert_\alpha$ and according to Lemma B.1 in \cite{gubinelli2015paracontrolled}, also $\Vert \vartheta_\lambda^\varepsilon\circ\xi_\lambda^\varepsilon-c^\lambda_\varepsilon\Vert_{2\alpha+2\delta-2}\lesssim \Vert \vartheta^\varepsilon\circ\xi^\varepsilon-c_\varepsilon\Vert_{2\alpha+2\delta-2}+\Vert \xi^\varepsilon\Vert_{\alpha+\delta-2}$, we have 
\[
C_{\xi^\varepsilon_\lambda}\lesssim C_{\xi^\varepsilon}
\]
Choosing $\lambda>0$ sufficiently small such that
\begin{align*}
    \lambda^\alpha C_FC_{\xi}
    \ll1,
\end{align*}
the term containing
$\Vert u_\lambda^\varepsilon\Vert_{\ml L_T^{\alpha,\nu}}$
can be absorbed into the left-hand side. Hence
\begin{equation}\label{eq;bounded u;2}
    \Vert u_\lambda^\varepsilon\Vert_{\ml L_T^{\alpha,\nu}}
    \lesssim
    1+
    \Vert u_\lambda^{\sharp,\varepsilon}
    \Vert_{\ml L_T^{\alpha,\nu}}.
\end{equation}
with arbitrary high probability for all sufficient small $\varepsilon$.
Plugging it into \eqref{eq;bounded;scaling}, we obtain that
\begin{equation}\label{eq;bounded;scaling;closed}
    \begin{aligned}
    &\sup_{t\in(0,T]}t^{\nu\alpha/2}
    \Vert u_\lambda^{\sharp,\varepsilon}(t)\Vert_{2\alpha}
    +
    \Vert u_\lambda^{\sharp,\varepsilon}\Vert_{\ml L_T^{\alpha,\nu}}
    \\
    &\qquad\lesssim
    \Vert u_0^\lambda\Vert_\alpha
    +
    \lambda^\alpha C_FC_{\xi}
    \left(
        1+
        (1+ \Vert u_\lambda^{\sharp,\varepsilon}\Vert_{\ml L_T^{\alpha,\nu}})^2
        +
        \Vert u_\lambda^{\sharp,\varepsilon}\Vert_{\ml L_T^{\alpha,\nu}}
        +
        \sup_{t\in(0,T]}t^{\nu\alpha/2}
        \Vert u_\lambda^{\sharp,\varepsilon}(t)\Vert_{2\alpha}
    \right)
\end{aligned}
\end{equation}
in the same probability sense. Let
\begin{align*}
    x
    :=
    \sup_{t\in(0,T]}t^{\nu\alpha/2}
    \Vert u_\lambda^{\sharp,\varepsilon}(t)\Vert_{2\alpha}
    +
    \Vert u_\lambda^{\sharp,\varepsilon}\Vert_{\ml L_T^{\alpha,\nu}}.
\end{align*}
Then \eqref{eq;bounded;scaling;closed} implies
\begin{align*}
    x
    \leq
    \Vert u_0^\lambda\Vert_\alpha
    +
    C\lambda^\alpha(1+x)^2.
\end{align*}
Choose $\lambda>0$(Only dependent on $C_\xi$, $C_F$ and $u_0$) sufficiently small such that
\begin{align*}
    C\lambda^\alpha
    \bigl(1+\Vert u_0^\lambda\Vert_\alpha\bigr)
    \ll 1.
\end{align*}
A standard continuity argument then yields
\begin{align*}
    \sup_{t\in(0,T]}t^{\nu\alpha/2}
    \Vert u_\lambda^{\sharp,\varepsilon}(t)\Vert_{2\alpha}
    +
    \Vert u_\lambda^{\sharp,\varepsilon}\Vert_{\ml L_T^{\alpha,\nu}}=x
    \lesssim
    1+\Vert u_0\Vert_\alpha.
\end{align*}
Combining with \eqref{eq;bounded u;2}, we have
\begin{align*}
    \Vert u^\varepsilon\Vert_{\ml L_T^{\alpha,\nu}}\lesssim C(C_F,C_{\xi},u_0)+\Vert u_0\Vert_\alpha.
\end{align*}
Then,  the family $(u^\varepsilon)_{\varepsilon>0}$ admits a uniform bounded in $\ml L_{T}^{\alpha,\nu}$ with arbitrary high probability for all sufficiently small $\varepsilon$. Since for $F\in C_b^3$, the map $(u_0,\xi,\vartheta,\xi\circ\vartheta)\mapsto u\in C_T\ml C^\alpha$ is locally Lipschitz continuous, then there is a unique paracontrolled solution on a small time interval in weak sense, which we summarize as following.
\begin{theorem}\label{thm;main;PAM}
Let $\nu\in(0,1]$, $\alpha\in(2/3,1)$, and $F\in C_b^3$, and let
$u_0\in\mathcal C^\alpha$. Let $(\xi^\varepsilon)_{\varepsilon>0}$ be a family of smooth approximations of $\xi$. For each $\varepsilon>0$, let $u^\varepsilon$ be the classical solution to the renormalized equation
\begin{equation}
    Lu^\varepsilon
    =
    F(u^\varepsilon)\xi^\varepsilon
    -
    c_\varepsilon F'(u^\varepsilon)F(u^\varepsilon),
    \qquad
    u^\varepsilon(0)=u_0.
\end{equation}

Then there exists $T>0$ such that the equation
\begin{equation}
    Lu
    =
    F(u)\diamond\xi,
    \qquad
    u(0)=u_0,
\end{equation}
admits a unique weak solution $u$ on $[0,T]$, in the following sense
\begin{equation}
    u^\varepsilon\longrightarrow u
    \qquad
    \text{in probability in } C_T\mathcal C^\alpha.
\end{equation}
\end{theorem}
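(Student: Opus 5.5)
The proof runs as a fixed-point argument in the paracontrolled space, followed by a stability argument for the solution map. It reuses the uniform a priori bound just obtained from the scaling argument. Concretely, I would work with the pair $(u,u^\sharp)$, where
\[
u=F(u)\prec\vartheta+u^\sharp ,
\]
equipped with the norm
\[
\|u\|_{\mathcal L_T^{\alpha,\nu}}+\sup_{t\in(0,T]}t^{\nu\alpha/2}\|u^\sharp(t)\|_{2\alpha}.
\]
Here $\mathcal L_T^{\alpha,\nu}=C_T\mathcal C^\alpha\cap C_T^{\nu\alpha/2}L^\infty$. For given enhanced data
\[
\mathbb X=(\xi,\vartheta,\vartheta\diamond\xi)\in\mathcal C^{\alpha+\delta-2}\times\mathcal C^{\alpha+\delta}\times\mathcal C^{2\alpha+2\delta-2},
\]
I would define the map $\Gamma(u)$ by the mild formula
\[
U(t)u_0+I(\Phi^\sharp)+I(F(u)\prec\xi),
\]
where $\Phi^\sharp$ is the expression derived above with $\vartheta\circ\xi-c_\varepsilon$ replaced by $\vartheta\diamond\xi$. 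The estimates already displayed (paraproduct bounds, Lemmas~\ref{le;para-controlled} and~\ref{le;paralinearization}, the commutator bound for $I(F(u)\prec\xi)-F(u)\prec I\xi$, \eqref{eq;est;theta-Ixi}, and the Schauder estimates of Lemma~\ref{le;est;Schauder-type}) show that $\Gamma$ maps a ball into itself on the rescaled problem once $\lambda^\alpha C_FC_\xi\ll1$.

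The next step is the contraction. Because $F\in C_b^3$, the Lipschitz versions of Lemmas~\ref{le;paralinearization} and~\ref{le;para-differential calculus} are available, and the trilinear commutator $C$ is bounded. Running the same chain of estimates on differences $\Gamma(u)-\Gamma(v)$ then gives
\[
d(\Gamma u,\Gamma v)\lesssim \lambda^\alpha C_F C_\xi\,(1+R)^2\,d(u,v).
\]
Choosing $\lambda$ (equivalently $T$) small, depending on $R\sim 1+\|u_0\|_\alpha$, makes $\Gamma$ a contraction. Scaling back with $\Lambda_\lambda^{-1}$ yields a unique paracontrolled solution on $[0,\lambda^{2/\nu}T]$, which I rename $T$. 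The same difference estimates, now also varying $(u_0,\mathbb X)$, show that the solution map $(u_0,\mathbb X)\mapsto u\in C_T\mathcal C^\alpha$ is locally Lipschitz on bounded sets.

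For smooth $\xi^\varepsilon$, set $\mathbb X^\varepsilon=(\xi^\varepsilon,\vartheta^\varepsilon,\vartheta^\varepsilon\circ\xi^\varepsilon-c_\varepsilon)$. I would check that the classical solution $u^\varepsilon$ of the renormalized equation is exactly the fixed point with data $\mathbb X^\varepsilon$. The algebra leading to $\Phi^{\sharp,\varepsilon}$ shows that the counterterm $-c_\varepsilon F'(u^\varepsilon)F(u^\varepsilon)$ combines with $F'(u^\varepsilon)F(u^\varepsilon)(\vartheta^\varepsilon\circ\xi^\varepsilon)$ into $F'(u^\varepsilon)F(u^\varepsilon)(\vartheta^\varepsilon\circ\xi^\varepsilon-c_\varepsilon)$. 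By Lemma~\ref{le;area;2}, $\mathbb X^\varepsilon\to\mathbb X$ almost surely, hence in probability. Also $\xi^\varepsilon\to\xi$ and $\vartheta^\varepsilon\to\vartheta$ by the explicit relation $-\Delta\vartheta=\Pi\xi$. On the event
\[
C_{\xi^\varepsilon}\le 1+C_\xi ,
\]
which has probability $\ge 1-\delta$ uniformly for small $\varepsilon$ (as in the remark after Lemma~\ref{le;area;1}), the existence time $T$ can be taken uniform in $\varepsilon$. Local Lipschitz continuity then gives $u^\varepsilon\to u$ in probability in $C_T\mathcal C^\alpha$. Uniqueness follows from uniqueness of the fixed point and from the limit being independent of the mollifier $\psi$. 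This independence holds because the limit $\vartheta\diamond\xi$ in Lemma~\ref{le;area;2} does not depend on $\psi$.

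I expect the main obstacle to be the time-regularity component. The contraction must control $\|\cdot\|_{C_T^{\nu\alpha/2}L^\infty}$ of the differences of commutators $I(f\prec\xi)-f\prec I\xi$, and of $F(u)\prec(I\xi-\vartheta)$, which carries the singular weight $t^{-\nu\alpha/2}$. Here there is no semigroup, so this has to come from the commutator lemma for the fractional flow together with the weighted estimate \eqref{eq:weighted-vartheta}. I would first verify that this commutator lemma is bilinear in $(f,\xi)$, so that differences are handled by linearity. I would also check that all weights $t^{\nu\alpha/2}$ are compatible with Lemma~\ref{le;est;Schauder-type}, namely that
\[
\kappa+\gamma=\nu\alpha\le \nu+\frac{\nu(2\alpha-2)}{2}=\nu\alpha .
\]
This is borderline. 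If it fails, I would lower the time exponent slightly to $\nu\alpha/2-\epsilon$ and absorb the loss using the $\delta$-room in $\xi\in\mathcal C^{\alpha+\delta-2}$.
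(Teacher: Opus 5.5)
This is essentially the paper's argument. You use the same ansatz $u=F(u)\prec\vartheta+u^\sharp$ with the time-independent $\vartheta=(-\Delta)^{-1}\Pi\xi$, the same splitting of $I(F(u)\prec\xi)-F(u)\prec\vartheta$ via Lemma~\ref{le;commutator;heat flow} and \eqref{eq;est;theta-Ixi}, the same weighted Schauder bounds, the same scaling to produce $\lambda^\alpha$, and the same convergence via Lipschitz dependence on $(u_0,\mathbb X)$. Your exponent check $\kappa+\gamma=\nu\alpha$ is admissible with equality in Lemma~\ref{le;est;Schauder-type}, so no loss is needed there. The only difference is that you derive the Lipschitz dependence from an explicit contraction, whereas the paper closes a priori bounds on the classical $u^\varepsilon$ by a continuity argument and merely asserts it.

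One technical fix for your fixed point: carry the Gubinelli derivative as an independent variable. That is, work with pairs $(u,u')$, set $u^\sharp=u-u'\prec\vartheta$, and map $(u,u')\mapsto(U(\cdot)u_0+I(F(u)\diamond\xi),F(u))$. With the constraint $u^\sharp=u-F(u)\prec\vartheta$ built in, $\Gamma$ need not map your space to itself, since $(F(u)-F(\Gamma u))\prec\vartheta$ only has regularity $\alpha+\delta<2\alpha$.
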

\subsection{Blow up criterion}

In this section, we establish a blow-up criterion in terms
of the maximal existence time. Following the argument in
\cite{gubinelli2015paracontrolled}, for $F\in C_b^3$,
we refine the estimate for $\Phi^\sharp$ as follows:
\begin{equation}\label{eq;Phi sharp;enhanced}
    \begin{aligned}
        \|\Phi^\sharp\|_{2\alpha-2}
        \lesssim{}&
        \|F\|_{C_b^3}(1+C_FC_\xi)
        (1+\|u\|_{C_TL^\infty}^2)\\
        &\times
        \left(
            1+\|u\|_{\ml L_T^{\alpha,\nu}}
            +\|u^\sharp\|_{2\alpha}
        \right).
    \end{aligned}
\end{equation}
For $C>0$, define
\[
    \tau_C
    :=
    \inf\left\{
        t\geq0:\|u(t)\|_{L^\infty}\geq C
    \right\}.
\]
On every interval contained in $[0,\tau_C)$, the
$L^\infty$ norm of $u$ is bounded by $C$. Consequently,
the right-hand side of
\eqref{eq;Phi sharp;enhanced} depends at most linearly
on the paracontrolled norms. We therefore propagate the
local estimates from one time interval to the next,
as in the classical parabolic setting. The memory
effect, however, prevents a direct application of the
usual iteration argument. We address this difficulty
by estimating the increments of the solution while
retaining the history terms explicitly.

Fix $C>0$ and $T<\tau_C$. We first establish an a priori
bound for $u$ in $C_T\ml C^\alpha$. Since
$\|u\|_{C_TL^\infty}\leq C$, the scaled solution satisfies
\[
    \|u_\lambda\|_{C_{\lambda^{-2/\nu}T}L^\infty}
    \leq C.
\]
Repeating the local existence argument with
\eqref{eq;Phi sharp;enhanced}, we choose $\lambda>0$,
independently of $u_0$, sufficiently small that
\begin{equation}\label{eq;scaled;smallness}
    \lambda^\alpha
    \|F\|_{C_b^3}(1+C^2)(1+C_FC_\xi)
    \ll1.
\end{equation}
The local estimates then yield
\begin{equation}\label{eq;scaled;local;bound}
    \begin{aligned}
        \|u_\lambda\|_{\ml L_T^{\alpha,\nu}}
        &\lesssim 1+\|u_0^\lambda\|_\alpha,\\
        \sup_{t\in(0,T]}t^\gamma
        \|u_\lambda^\sharp(t)\|_{2\alpha}
        +
        \|u_\lambda^\sharp\|_{\ml L_T^{\alpha,\nu}}
        &\lesssim 1+\|u_0^\lambda\|_\alpha,
    \end{aligned}
\end{equation}
where $\gamma:=\nu\alpha/2$.

We next estimate the increments on the following interval.
Set
\[
    w_T(t):=u_\lambda(T+t)-u_\lambda(T),
    \qquad
    z_T(t):=
    u_\lambda^\sharp(T+t)-u_\lambda^\sharp(T),
    \qquad 0\leq t\leq T.
\]
By paracontrolled ansatz, we have
\begin{align*}
    \Vert w_T\Vert_{\ml L_T^{\alpha,\nu}}\lesssim C_{F^\lambda}C_{\xi^\lambda}\Vert w_T\Vert_{\ml L_T^{\alpha,\nu}}+\Vert z_T\Vert_{\ml L_T^{\alpha,\nu}}
\end{align*}
combining with \eqref{eq;scaled;smallness}, we have
\begin{equation}\label{eq;controll 2}
     \Vert w_T\Vert_{\ml L_T^{\alpha,\nu}}\lesssim \Vert z_T\Vert_{\ml L_T^{\alpha,\nu}}
\end{equation}
Subtracting the mild formulations at times $T+t$ and $T$,
we obtain
\begin{equation}\label{eq;decomposition;R}
        z_T(t)=R^1_T(t)+R^2_T(t)+R^3_T(t),
\end{equation}
where
\begin{align*}
    R^1_T(t)
    :={}&
    [U(T+t)-U(T)]u_0^\lambda\\
    &+
    \int_0^T
    [V(T+t-s)-V(T-s)]
    \Phi_\lambda^\sharp(u_\lambda(s))\,\mathrm ds,\\
    R^2_T(t)
    :={}&
    \int_0^t
    V(t-s)
    \Phi_\lambda^\sharp(u_\lambda(T+s))\,\mathrm ds,
\end{align*}
and
\begin{align*}
    R^3_T(t)
    :={}&
    I\bigl(F_\lambda(u_\lambda)\prec\xi_\lambda\bigr)(T+t)
    -
    I\bigl(F_\lambda(u_\lambda)\prec\xi_\lambda\bigr)(T)\\
    &-
    \Bigl[
        F_\lambda(u_\lambda(T+t))
            \prec\vartheta_\lambda(T+t)
        -
        F_\lambda(u_\lambda(T))
            \prec\vartheta_\lambda(T)
    \Bigr].
\end{align*}

Applying the difference estimates together with
\eqref{eq;Phi sharp;enhanced} and
\eqref{eq;scaled;local;bound}, we obtain
\begin{equation}\label{eq;est;R1}
    \sup_{t\in(0,T]}t^\gamma
    \|R^1_T(t)\|_{2\alpha}
    +
    \|R^1_T\|_{\ml L_T^{\alpha,\nu}}
    \lesssim
    1+\|u_0^\lambda\|_\alpha.
\end{equation}
Similarly, the local Schauder estimates give
\begin{equation}\label{eq;est;R2}
    \begin{aligned}
        \sup_{t\in(0,T]}t^\gamma
        \|R^2_T(t)\|_{2\alpha}
        +
        \|R^2_T\|_{\ml L_T^{\alpha,\nu}}\lesssim&
        \lambda^\alpha
        \|F\|_{C_b^3}(1+C^2)(1+C_FC_\xi)\\
        &\times
        \left(
            1+
            \|u_\lambda(T+\cdot)\|_{\ml L_T^{\alpha,\nu}}
            +
            \sup_{s\in(0,T]}s^\gamma
            \|u_\lambda^\sharp(T+s)\|_{2\alpha}
        \right).
    \end{aligned}
\end{equation}
noting that
\[
    \sup_{s\in(0,T]}s^\gamma
    \|u_\lambda^\sharp(T+s)\|_{2\alpha}
    \lesssim
    1+\|u_0^\lambda\|_\alpha
    +
    \sup_{s\in(0,T]}s^\gamma\|z_T(s)\|_{2\alpha},
\]
then we have
\begin{equation}
    \begin{aligned}
         \sup_{t\in(0,T]}t^\gamma
        \|R^2_T(t)\|_{2\alpha}
        +
        \|R^2_T\|_{\ml L_T^{\alpha,\nu}}\lesssim& \lambda^\alpha
        \|F\|_{C_b^3}(1+C^2)(1+C_FC_\xi)\\
        &\times(1+\Vert u_0^\lambda\Vert_\alpha+\Vert w_T\Vert_{\ml L_T^{\alpha,\nu}}+\Vert z_T\Vert_{\ml L_T^{\alpha,\nu}})
    \end{aligned}
\end{equation}
To estimate $R^3_T$, define
\[
    \widetilde F_\lambda(t)
    :=
    F_\lambda(u_\lambda(T+t))
    -
    F_\lambda(u_\lambda(T)).
\]
Changing variables gives
\[
    \begin{aligned}
        &\int_T^{T+t}
        V(T+t-s)
        \bigl(F_\lambda(u_\lambda(s))
            \prec\xi_\lambda\bigr)\,\mathrm ds\\
        &\qquad=
        \int_0^t
        V(t-s)
        \bigl(F_\lambda(u_\lambda(T+s))
            \prec\xi_\lambda\bigr)\,\mathrm ds.
    \end{aligned}
\]
Consequently,
\begin{align*}
    R^3_T(t)
    ={}&
    \int_0^T
    [V(T+t-s)-V(T-s)]
    \bigl(F_\lambda(u_\lambda(s))
        \prec\xi_\lambda\bigr)\,\mathrm ds\\
    &+
    I(\widetilde F_\lambda\prec\xi_\lambda)(t)
    +
    \int_0^t
    V(t-s)
    \bigl(F_\lambda(u_\lambda(T))
        \prec\xi_\lambda\bigr)\,\mathrm ds\\
    &-
    \Bigl[
        F_\lambda(u_\lambda(T+t))
            \prec\vartheta_\lambda(T+t)
        -
        F_\lambda(u_\lambda(T))
            \prec\vartheta_\lambda(T)
    \Bigr].
\end{align*}
The history terms are estimated using the difference
estimates, while the remaining terms are treated
together using the commutator estimates. In particular,
the shifted history integral satisfies
\[
    \begin{aligned}
        &t^\gamma
        \left\|
            \int_0^T
            V(T+t-s)
            \bigl(F_\lambda(u_\lambda(s))
                \prec\xi_\lambda\bigr)\,\mathrm ds
        \right\|_{2\alpha}\\
        &\qquad\lesssim
        t^\gamma
        \int_0^T
        (T+t-s)^{-1-\gamma}\,\mathrm ds\,
        C_{F_\lambda}C_{\xi_\lambda}
        \left(
            1+\|u_\lambda\|_{\ml L_T^{\alpha,\nu}}
        \right)\\
        &\qquad\lesssim
        C_{F_\lambda}C_{\xi_\lambda}
        \left(
            1+\|u_\lambda\|_{\ml L_T^{\alpha,\nu}}
        \right).
    \end{aligned}
\]
Combining the history and commutator estimates with
\eqref{eq;scaled;local;bound}, we obtain
\begin{equation}\label{eq;est;R3}
    \begin{aligned}
        &\sup_{t\in(0,T]}t^\gamma
        \|R^3_T(t)\|_{2\alpha}
        +
        \|R^3_T\|_{\ml L_T^{\alpha,\nu}}\\
        &\qquad\lesssim
        C_{F_\lambda}C_{\xi_\lambda}
        \left(
            1+\|u_0^\lambda\|_\alpha
            +\|w_T\|_{\ml L_T^{\alpha,\nu}}
        \right).
    \end{aligned}
\end{equation}
Combining \eqref{eq;est;R1}, \eqref{eq;est;R2},
\eqref{eq;est;R3}, and \eqref{eq;controll 2}, and 
we obtain
\begin{align*}
    &\sup_{t\in(0,T]}t^\gamma\|z_T(t)\|_{2\alpha}
    +\|z_T\|_{\ml L_T^{\alpha,\nu}}\\
    &\qquad\lesssim
    1+\|u_0^\lambda\|_\alpha\\
    &\qquad\quad+
    \Bigl[
        \lambda^\alpha\|F\|_{C_b^3}
        (1+C^2)(1+C_FC_\xi)
        +C_{F_\lambda}C_{\xi_\lambda}
    \Bigr]
    \left(
        \sup_{t\in(0,T]}t^\gamma\|z_T(t)\|_{2\alpha}
        +\|z_T\|_{\ml L_T^{\alpha,\nu}}
    \right).
\end{align*}
By our choice of $\lambda$, the last term can be
absorbed into the left-hand side. Consequently,
\[
    \sup_{t\in(0,T]}t^\gamma\|z_T(t)\|_{2\alpha}
    +\|z_T\|_{\ml L_T^{\alpha,\nu}}
    \lesssim 1+\|u_0^\lambda\|_\alpha.
\]
Using \eqref{eq;controll 2}, we conclude that
\[
    \|u_\lambda(T+\cdot)\|_{\ml L_T^{\alpha,\nu}}
    \lesssim
    \|u_\lambda(T)\|_\alpha
    +\|w_T\|_{\ml L_T^{\alpha,\nu}}
    \lesssim 1+\|u_0^\lambda\|_\alpha.
\]
Thus, the estimates extend to $[0,2T]$. Iterating
this argument over successive intervals yields
\[
    u_\lambda\in C_{\lambda^{-2/\nu}T}\ml C^\alpha.
\]
Scaling back, we obtain 
\begin{equation}\label{eq;u;uniform}
    \Vert u\Vert_{C_T\ml C^\alpha}\lesssim 1+\Vert u_0\Vert_\alpha
\end{equation}
$u\in C_T\ml C^\alpha$. We have shown that $u\in C_T\ml C^\alpha$. We now
consider its continuation beyond time $T$. After rescaling
$u$ to $u_{\lambda_0}$, we apply the preceding argument
to the increments
\[
    w_b(t)
    :=
    u_{\lambda_0}(b+t)-u_{\lambda_0}(b),
    \qquad
    z_b(t)
    :=
    u_{\lambda_0}^\sharp(b+t)
    -u_{\lambda_0}^\sharp(b),
    \qquad
    b:=\lambda_0^{-2/\nu}T,
\]
for $t\in[0,T]$.

The decomposition \eqref{eq;decomposition;R} remains valid:
\[
    z_b=R_b^1+R_b^2+R_b^3.
\]
The estimates for $R_b^1$ and $R_b^3$ follow as before.
The only difference concerns $R_b^2$: on the new
interval, we no longer have an a priori $L^\infty$
bound and must therefore use the quadratic estimate
\eqref{eq;Phi;sharp}. The Schauder estimates yield
\begin{equation}\label{eq;continuation;R2}
    \begin{aligned}
        &\sup_{t\in(0,T]}t^\gamma
        \|R_b^2(t)\|_{2\alpha}
        +
        \|R_b^2\|_{\ml L_T^{\alpha,\nu}}\\
        &\qquad\lesssim
        \lambda_0^\alpha C_FC_\xi
        \left(
            1+
            \|u_{\lambda_0}(b+\cdot)\|_{C_T\ml C^\alpha}^2
            +
            \sup_{s\in(0,T]}s^\gamma
            \|u_{\lambda_0}^\sharp(b+s)\|_{2\alpha}
        \right).
    \end{aligned}
\end{equation}
Moreover,
\[
    \|u_{\lambda_0}(b+\cdot)\|_{C_T\ml C^\alpha}^2
    \lesssim
    \|w_b\|_{C_T\ml C^\alpha}^2
    +
    \|u_{\lambda_0}(b)\|_\alpha^2,
\]
and
\begin{align*}
    \sup_{s\in(0,T]}s^\gamma
    \|u_{\lambda_0}^\sharp(b+s)\|_{2\alpha}
    &\leq
    \sup_{s\in(0,T]}s^\gamma
    \|z_b(s)\|_{2\alpha}\\
    &\quad+
    T^\gamma\|u_{\lambda_0}^\sharp(b)\|_{2\alpha}.
\end{align*}

To control the contribution at the left endpoint, we
apply the preceding argument on the interval before
$b$. By the uniform bound \eqref{eq;u;uniform}, the
estimate for $\Phi^\sharp$ on that interval reduces
to the linear estimate
\eqref{eq;Phi sharp;enhanced}. Choosing $\lambda_0$
sufficiently small, depending only on $u_0$, $C_F$,
$C_\xi$, and $T$, the resulting estimates give
\[
    \|u_{\lambda_0}(b)\|_\alpha
    +
    T^\gamma\|u_{\lambda_0}^\sharp(b)\|_{2\alpha}
    \lesssim 1+\|u_0^{\lambda_0}\|_\alpha.
\]
Consequently, \eqref{eq;continuation;R2} implies
\begin{equation}\label{eq;continuation;R2;increment}
    \begin{aligned}
        &\sup_{t\in(0,T]}t^\gamma
        \|R_b^2(t)\|_{2\alpha}
        +
        \|R_b^2\|_{\ml L_T^{\alpha,\nu}}\\
        &\qquad\lesssim
        \lambda_0^\alpha C_FC_\xi
        \left(
            (1+\|u_0^{\lambda_0}\|_\alpha)^2
            +
            \|w_b\|_{\ml L_T^{\alpha,\nu}}^2
            +
            \sup_{s\in(0,T]}s^\gamma
            \|z_b(s)\|_{2\alpha}
        \right).
    \end{aligned}
\end{equation}

For brevity, write
\[
    \|z_b\|_{\mathcal X_T}
    :=
    \sup_{t\in(0,T]}t^\gamma
    \|z_b(t)\|_{2\alpha}
    +
    \|z_b\|_{\ml L_T^{\alpha,\nu}}.
\]
As before, the paracontrolled ansatz and the smallness
of $\lambda_0$ imply
\[
    \|w_b\|_{\ml L_T^{\alpha,\nu}}
    \lesssim
    \|z_b\|_{\ml L_T^{\alpha,\nu}}
    \leq \|z_b\|_{\mathcal X_T}.
\]
Combining \eqref{eq;continuation;R2;increment} with
the estimates for $R_b^1$ and $R_b^3$, and absorbing
the linear terms, we obtain
\[
    \|z_b\|_{\mathcal X_T}
    \lesssim
    1+\|u_0^{\lambda_0}\|_\alpha
    +
    \lambda_0^\alpha C_FC_\xi
    \left(
        (1+\|u_0^{\lambda_0}\|_\alpha)^2
        +\|z_b\|_{\mathcal X_T}^2
    \right).
\]
Taking $\lambda_0$ sufficiently small that
\[
    \lambda_0^\alpha C_FC_\xi
    (1+\|u_0^{\lambda_0}\|_\alpha)\ll1,
\]
a standard continuity argument on $[0,a]$, $0<a\leq T$,
yields
\[
    \|z_b\|_{\mathcal X_T}
    \lesssim 1+\|u_0^{\lambda_0}\|_\alpha.
\]
Hence,
\[
    \|u_{\lambda_0}(b+\cdot)\|_{\ml L_T^{\alpha,\nu}}
    \lesssim
    \|u_{\lambda_0}(b)\|_\alpha
    +
    \|w_b\|_{\ml L_T^{\alpha,\nu}}
    \lesssim 1+\|u_0^{\lambda_0}\|_\alpha.
\]
Scaling back yields an a priori bound on
$[T,T+\lambda_0^{2/\nu}T]$, closing the estimates
on the new interval. Summarize our discussion, we obtain the blow up criterion
\begin{theorem}
      The solution constructed in Theorem \ref{thm;main;PAM} can extends uniquely to $[0,T^*)$, where $T^*=\tau_\infty$ is defined as 
      \begin{equation}
          \tau_\infty
    :=
    \inf\left\{
        t\geq0:\|u(t)\|_{L^\infty}=\infty
    \right\}.
      \end{equation}
\end{theorem}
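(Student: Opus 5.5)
The plan is to argue by contradiction from the a priori bounds already derived: define the maximal existence time $T^*$ of the paracontrolled solution from Theorem~\ref{thm;main;PAM}, and show that $T^*<\tau_\infty$ is impossible. Concretely, I take $T^*:=\sup\{T>0:\ u\in\mathcal L_T^{\alpha,\nu} \text{ solves the equation on }[0,T]\}$. Uniqueness on each $[0,T]$ comes from the local Lipschitz dependence of the solution map $(u_0,\xi,\vartheta,\vartheta\diamond\xi)\mapsto u$, together with the fact that the mild formulation on $[0,T]$ involves only data on $[0,T]$. This gives a consistent solution on $[0,T^*)$. The inequality $T^*\le\tau_\infty$ is trivial, since a solution in $C_T\mathcal C^\alpha$ has finite $L^\infty$ norm on $[0,T]$. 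So it remains to show that if $T^*<\tau_\infty$, then the solution extends past $T^*$.

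Assume $T^*<\tau_\infty$. Then there is $C<\infty$ with $\sup_{t<T^*}\|u(t)\|_{L^\infty}\le C$ (if the sup were infinite for every $C$, then $\tau_C\le T^*$ for all $C$, contradicting $T^*<\tau_\infty$ up to the usual interpretation $\tau_\infty=\lim_C\tau_C$), hence $T^*\le\tau_{C+1}$. On every $[0,T]$ with $T<T^*$, the refined linear estimate \eqref{eq;Phi sharp;enhanced} applies. The scaling choice \eqref{eq;scaled;smallness} of $\lambda$ depends only on $C$, $F$, and $C_\xi$, not on $T$ or on the current initial data. The increment argument ($z_T=R^1_T+R^2_T+R^3_T$, with the history terms handled by Lemma~\ref{le;est;truncated-Schauder}) then propagates the bound over successive intervals of fixed rescaled length. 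This yields \eqref{eq;u;uniform} uniformly in $T<T^*$:
\[
\sup_{T<T^*}\bigl(\|u\|_{\mathcal L_T^{\alpha,\nu}}+\sup_{t\le T}t^{\nu\alpha/2}\|u^\sharp(t)\|_{2\alpha}\bigr)\lesssim_{C,F,\xi,T^*}1+\|u_0\|_\alpha .
\]
In particular, $u$ is uniformly H\"older in time with values in $L^\infty$ and bounded in $\mathcal C^\alpha$, so it extends continuously to $t=T^*$ (with an interpolation argument to get continuity in $\mathcal C^{\alpha'}$ for $\alpha'<\alpha$, or directly in $\mathcal C^\alpha$ from the $u^\sharp$ bounds).

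Next I restart at $b=T^*-\eta$ for small $\eta$, using the continuation argument just above, the one with $w_b,z_b$ and the quadratic estimate \eqref{eq;continuation;R2}. The smallness parameter $\lambda_0$ there depends only on $C_F$, $C_\xi$, $T$, and a bound for $\|u(b)\|_\alpha$ and $T^\gamma\|u^\sharp(b)\|_{2\alpha}$. Both are controlled uniformly in $b<T^*$ by the previous step. Hence the extension length $\lambda_0^{2/\nu}T$ is independent of $b$. Choosing $\eta$ smaller than this length gives a solution on $[0,T^*-\eta+\lambda_0^{2/\nu}T]\supsetneq[0,T^*]$. For consistency, the history terms $R^1_b$, $R^2_b$, $R^3_b$ must use the already constructed solution on $[0,b]$, so the extended function satisfies the full mild equation from time $0$. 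This contradicts maximality, so $T^*=\tau_\infty$.

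The main obstacle is precisely this uniformity in the restart. Because of memory, the continuation is not a fresh Cauchy problem from $u(b)$: the history integrals $\int_0^b[V(b+t-s)-V(b-s)](\cdots)\,ds$ must be bounded using only the global-in-$[0,b]$ weighted norms, with constants independent of $b$. Lemma~\ref{le;est;truncated-Schauder} (via \eqref{eq;truncated-Schauder-block}, with constants independent of $a$) supplies this. I would check carefully that the time weights $t^\gamma$ versus $(t+b)^\gamma$ do not degenerate as $b\uparrow T^*$, and that the renormalization and the approximation $u^\varepsilon\to u$ in probability pass to each extended interval by the same local Lipschitz argument.
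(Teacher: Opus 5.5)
Your proposal is correct and follows essentially the same route as the paper. The $L^\infty$ bound below $\tau_C$ turns the estimate for $\Phi^\sharp$ into the linear bound \eqref{eq;Phi sharp;enhanced}, so a data-independent $\lambda$ lets the increment decomposition $z_T=R^1_T+R^2_T+R^3_T$, with history terms handled by the truncated Schauder estimate, propagate \eqref{eq;u;uniform}; the continuation step with the quadratic estimate and $\lambda_0$ fixed by the endpoint bound then extends the solution by a uniform amount. The only difference is that you package these two steps into an explicit maximal-existence-time contradiction argument, reading $\tau_\infty$ as $\lim_{C\to\infty}\tau_C$, which the paper leaves implicit in its concluding sentence.
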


\appendix 
\section{Integral inequality}\label{sec;integral}
\begin{lemma}\label{le;fractional calculus}
Let $0\leq t'\leq t\leq T$ and $\delta\in]1,2[$. Then, for any
$\sigma\in(0,2-\delta]$, we have
\begin{align*}
    \int_0^{t'}
    \int_{t'-s}^{t-s}
    \tau^{-\delta}\mathrm d\tau\mathrm ds
    \lesssim
    T^{2-\delta-\sigma}(t-t')^\sigma.
\end{align*}
If $\delta\in]1,\frac{3}{2}[$, then, for any
$\sigma\in(0,3-2\delta]$, we have
\begin{align*}
    \int_0^{t'}
    \left(
    \int_{t'-s}^{t-s}
    \tau^{-\delta}\mathrm d\tau
    \right)^2
    \mathrm ds
    \lesssim
    T^{3-2\delta-\sigma}(t-t')^\sigma.
\end{align*}
Moreover, if $\delta\in]1,2[$ and $\rho\in[0,1)$, then, for any
$\sigma>0$ satisfying $\sigma+\rho\leq2-\delta$, we have
\begin{align*}
    \int_0^{t'}
    \int_{t'-s}^{t-s}
    \tau^{-\delta}s^{-\rho}
    \mathrm d\tau\mathrm ds
    \lesssim
    T^{2-\delta-\rho-\sigma}
    (t-t')^\sigma.
\end{align*}

\begin{proof}
    Let $\gamma=t-t'$ and set
    \[
        D_\gamma(\eta)
        :=\eta^{1-\delta}-(\eta+\gamma)^{1-\delta}.
    \]
    By the mean value theorem,
    \begin{equation}\label{eq:kernel-difference-bound}
        0\le D_\gamma(\eta)
        \lesssim
        \min\{\eta^{1-\delta},\gamma\eta^{-\delta}\}.
    \end{equation}
    We may assume $\gamma>0$, since the case $\gamma=0$
    is trivial.

    For $p=1,2$, changing variables $\eta=t'-s$
    and using \eqref{eq:kernel-difference-bound}, we obtain
    \begin{align*}
        &\int_0^{t'}
        \left(
            \int_{t'-s}^{t-s}\tau^{-\delta}
            \,\mathrm d\tau
        \right)^p\mathrm ds\\
        &\qquad\lesssim
        \int_0^\gamma \eta^{p(1-\delta)}
        \,\mathrm d\eta
        +\gamma^p\int_\gamma^\infty
        \eta^{-p\delta}\,\mathrm d\eta\\
        &\qquad\lesssim
        \gamma^{1+p(1-\delta)},
    \end{align*}
    provided $p(\delta-1)<1$.
    Taking $p=1$ and $p=2$ gives the first two
    estimates, respectively.

    For the third estimate, suppose
    $0\le\rho<2-\delta$. By the same change of
    variables, it suffices to estimate
    \[
        \int_0^{t'}
        D_\gamma(\eta)(t'-\eta)^{-\rho}
        \,\mathrm d\eta.
    \]
    If $t'\le 2\gamma$, then
    \begin{align*}
        \int_0^{t'}
        D_\gamma(\eta)(t'-\eta)^{-\rho}
        \,\mathrm d\eta
        &\lesssim
        \int_0^{t'}
        \eta^{1-\delta}(t'-\eta)^{-\rho}
        \,\mathrm d\eta\\
        &\lesssim
        (t')^{2-\delta-\rho}
        \lesssim
        \gamma^{2-\delta-\rho}.
    \end{align*}
    If $t'>2\gamma$, splitting the integral
    at $t'/2$ and using
    \eqref{eq:kernel-difference-bound}, we obtain
    \begin{align*}
        &\int_0^{t'}
        D_\gamma(\eta)(t'-\eta)^{-\rho}
        \,\mathrm d\eta\\
        &\qquad\lesssim
        (t')^{-\rho}
        \int_0^{t'/2}D_\gamma(\eta)\,\mathrm d\eta
        +\gamma(t')^{-\delta}
        \int_{t'/2}^{t'}
        (t'-\eta)^{-\rho}\,\mathrm d\eta\\
        &\qquad\lesssim
        (t')^{-\rho}\gamma^{2-\delta}
        +\gamma(t')^{1-\delta-\rho}\\
        &\qquad\lesssim
        \gamma^{2-\delta-\rho}.
    \end{align*}
    Finally, since $\gamma\le T$, the desired
    estimates follow from
    \[
        \gamma^a\le T^{a-\sigma}\gamma^\sigma,
        \qquad 0<\sigma\le a,
    \]
    with $a=2-\delta$, $3-2\delta$, and
    $2-\delta-\rho$, respectively.
\end{proof}

\end{lemma}

\begin{lemma}\label{le;est;I,II}
    Let $\alpha\in(0,1/2)$, $\beta\in \m R$ functional $I_{t,t'}$ and $II_{t,t'}$ satisfy \eqref{eq;I} and \eqref{eq;II},  $f_k(t)$ satisfying $f_0(t)=1$ and  
        \begin{align}
            \label{eq;condition 1}\sup_{t\geq0,k\in \m Z\backslash \{0\}}t^{\alpha+1}\vert k\vert^{\beta }|f'_{k}(t)|\leq C\\
             \label{eq;condition 2}\sup_{t\geq0,k\in \m Z}t^{\alpha}|k|^{\beta} |f_k(t)|\leq   C      \end{align}
    and $f_0(t)$, then for $\sigma\in(0,1-2\alpha]$, we have 
     \begin{align*}
         I_{t,t'}(f_k)+II_{t,t'}(f_k)+|II_{t,0}(f_k)-II_{t',0}(f_k)|\lesssim |k|^{-2\beta} T^{1-2\alpha-\sigma}|t-t'|^{\sigma}
     \end{align*}
    \begin{proof}
    Without loss of generality, for $k\geq 1$ using the first assumption, using Lemma \ref{le;fractional calculus}, for  $\alpha\in(0,1/2)$ and $\sigma\in(0,1-2\alpha]$, we have
        \begin{align*}
            I_{t,t'}(f_k)= &\int_0^{t\wedge t'}(f_k(t-s)-f_k(t'-s))^2\mathrm{d}s\\
            =&\int_0^{t\wedge t'}(\int_{t'-s}^{t-s}f_k'(\tau)\mathrm d\tau)^2\mathrm{d}s\\
            \leq&|k|^{-2\beta}\int_0^{t\wedge t'}(\int_{t'-s}^{t-s}\tau^{-1-\alpha}\mathrm d\tau)^2)\\
            \leq &|k|^{4(\theta-1)} T^{1-2\alpha-\sigma}|t-t'|^{\sigma}.
        \end{align*}
         Using the second assumption, we have
         \begin{align*}
             II_{t,t'}(f_k)=&\int_{t\wedge t'}^{t\vee t'}f_k^2(t\vee t'-s)\mathrm{d}s \\
             \leq& |k|^{-2\beta}\int_{t\wedge t'}^{t\vee t'}(t\vee t'-s)^{-2\alpha}\mathrm d s\\
             \leq &|k|^{-2\beta}T^{1-2\alpha-\sigma}|t-t'|^{\sigma}.
         \end{align*}
         Finally, we consider $II_{t,0}(f_k)-II_{t',0}(f_k)$, noting that 
         \begin{align*}
             \int_0^{t\wedge t'}f^2_k(t-s)-f_k^2(t'-s)\mathrm ds=&\int_0^{t\wedge t'}(f_k(t-s)-f_k(t'-s))(f_k(t-s)+f_k(t'-s))\mathrm ds\\
             =&\int_0^{t\wedge t'}\int_{t'-s}^{t-s}f'_k(\tau)\mathrm d \tau(f_k(t-s)+f_k(t'-s))\mathrm ds
             =I_1+I_2
         \end{align*}
         Without loss of generality, let $t\geq t'$ For $I_1$, by \eqref{eq;condition 1} and  \eqref{eq;condition 2}, for any $\sigma>0$, we have 
         \begin{align*}
             I_1\lesssim& \int_0^{ t'}\int_{t'-s}^{t-s}f'_k(\tau)\mathrm d \tau f_k(t-s)\mathrm ds\\
             \lesssim& |k|^{-\beta}|t-t'|^{\alpha}\int_0^{t'}\frac{f_k(t-s)}{(t-s)^{\alpha}(t'-s)^{\alpha}}\mathrm ds\\
             \lesssim& |k|^{-2\beta}|t-t'|^{\alpha}\int_0^{t'}\frac{1}{(h+t'-s)^{2\alpha}(t'-s)^{\alpha}}\mathrm ds\\
             \lesssim& |k|^{-2\beta}|t-t'|^{\alpha-2\alpha\theta}\int_0^{t'}\frac{1}{(t'-t)^{\alpha+2\alpha(1-\theta)}}\mathrm ds\\
             \lesssim&|k|^{-2\beta}T^\sigma|t-t'|^{1-2\alpha-\sigma}
         \end{align*}
         It's similar for $I_2$ which we omit the details. By definition \eqref{eq;II}, for any $\sigma>0$ we have 
         \begin{align*}
             II_{t,0}(f_k)-II_{t',0}(f_k)=&\int_0^tf^2_k(t-s)\mathrm ds-\int_0^{t'}f_k^2(t'-s)\mathrm ds\\
             =&\int_0^{t\wedge t'}f^2_k(t-s)-f_k^2(t'-s)\mathrm ds+II_{t,t'}(f_k)\\
             \leq&  T^{\sigma}|k|^{-2\beta}|t-t'|^{1-2\alpha-\sigma}
         \end{align*}
         \end{proof}
\end{lemma}

\section{Commutator estimates}\label{sec;commutator}
In this section, we consider the commutator estimates. In the integer-order case, the Leibniz rule asserts that $\p_t^n(uv)=\sum_{i=1}^k C(k,i)\p_t^iu\p_t^{k-i}v$, This formula is fundamental to many powerful tools in pde analysis. In contrast, generally, for any $\nu\in(0,1)$, $\p_t^\nu (uv)\neq v\p_t^\nu u+u\p_t^\nu v$.  For $\nu=1$, the commutator estimate take the form of $L(u\pprec v)-u\pprec Lv$ in \cite{gubinelli2015paracontrolled,gubinelli2017kpz}. However in the setting of the time fractional case, it's not easy since the $\p_t^\nu (u\pprec v)-u\pprec \p_t^\nu v $ can't easily gain the smoothing 
\begin{lemma}\label{le;commutator;decay}
    Let $\alpha\in(0,1)$, $\beta\in \m R$, and let $\varphi$ and $\phi$ satisfy the assumptions of Lemma \ref{le;decay;varphi;phi}. Then, for $\delta\in[-1,2]$, we have
    \begin{align*}
        \Vert \varphi(\epsilon D)(u\prec v)-u\prec \varphi(\epsilon D)v\Vert_{\alpha+\beta+\delta}
        \lesssim \epsilon^{-\delta}\Vert u\Vert_{\ml C^\alpha}\Vert v\Vert_{\ml C^\beta}.
    \end{align*}
    Moreover, for $\delta\in[-1,4]$, we have
    \begin{align*}
        \Vert \phi(\epsilon D)(u\prec v)-u\prec \phi(\epsilon D)v\Vert_{\alpha+\beta+\delta}
        \lesssim \epsilon^{-\delta}\Vert u\Vert_{\ml C^\alpha}\Vert v\Vert_{\ml C^\beta}.
    \end{align*}
    \begin{proof}
       Let $\psi\in C_0^\infty(\m R^d)$ be supported in an annulus and satisfy $\psi\rho=\rho$. For $j\geq0$ and $|\eta|=1$, by scaling and integration by parts, we have
       \begin{align*}
           \Vert x^\eta \ml F^{-1}(\psi(2^{-j}\cdot)\varphi(\epsilon\cdot))\Vert_{L^1}
           \lesssim&
           2^{-j}\Vert (1-\Delta)^N((\p^\eta \psi)\varphi(\epsilon2^j\cdot))\Vert_{L^1}\\
           &+\epsilon\Vert (1-\Delta)^N(\psi\p^\eta\varphi(\epsilon2^j\cdot))\Vert_{L^1},
       \end{align*}
       where $N>d/2$.

  By the local elliptic estimate and \eqref{eq;decay;varphi}, for $|\eta|=1$, we have
\begin{align*}
    |\Delta^k\p^\eta\varphi(x)|
    \lesssim (1+|x|^2)^{-k-\frac32}.
\end{align*}
Hence, arguing as in the proof of Lemma \ref{le;decay}, when $2^j\epsilon\gtrsim1$, we obtain
\begin{align*}
    \Vert (1-\Delta)^N((\p^\eta\psi)\varphi(2^j\epsilon\cdot))\Vert_{L^1}
    &\lesssim (2^j\epsilon)^{-2},\\
    \Vert (1-\Delta)^N(\psi\p^\eta\varphi(2^j\epsilon\cdot))\Vert_{L^1}
    &\lesssim (2^j\epsilon)^{-3}.
\end{align*}
Therefore, for $\delta\in[-1,2]$ and $2^j\epsilon\gtrsim1$,
\begin{align*}
    \sum_{|\eta|=1}
    \Vert x^\eta\ml F^{-1}
    (\psi(2^{-j}\cdot)\varphi(\epsilon\cdot))\Vert_{L^1}
    \lesssim
    2^{-j(\delta+1)}\epsilon^{-\delta}.
\end{align*}
       Hence, arguing as above,
       \begin{align*}
           \Vert x^\eta \ml F^{-1}(\psi(2^{-j}\cdot)\varphi(\epsilon\cdot))\Vert_{L^1}
           \lesssim 2^{-j}+\epsilon(2^j\epsilon)
           \lesssim 2^{-j}.
       \end{align*}
       Since $2^j\epsilon\lesssim1$ and $\delta\geq-1$, this yields
       \begin{align*}
           \Vert x^\eta \ml F^{-1}(\psi(2^{-j}\cdot)\varphi(\epsilon\cdot))\Vert_{L^1}
           \lesssim
           2^{-j}(2^j\epsilon)^{-\delta}.
       \end{align*}
       Consequently, for all $j\geq0$,
       \begin{align}\label{eq;commutator;kernel;varphi}
           \sum_{\eta\in\m N^d:|\eta|=1}
           \Vert x^\eta \ml F^{-1}(\psi(2^{-j}\cdot)\varphi(\epsilon\cdot))\Vert_{L^1}
           \lesssim
           2^{-j(\delta+1)}\epsilon^{-\delta}.
       \end{align}

       Using the definition of the paraproduct, we write
       \begin{align*}
           \varphi(\epsilon D)(u\prec v)-u\prec\varphi(\epsilon D)v
           =
           \sum_{j\geq0}
           [(\psi(2^{-j}\cdot)\varphi(\epsilon\cdot))(D),S_{j-1}u]\Delta_jv,
       \end{align*}
       up to the low-frequency term, which can be treated in the same way. By \eqref{eq;commutator;kernel;varphi}, we have
       \begin{align*}
           &\Vert [(\psi(2^{-j}\cdot)\varphi(\epsilon\cdot))(D),S_{j-1}u]\Delta_jv\Vert_{L^\infty}\\
           &\quad\lesssim
           \sum_{\eta\in\m N^d:|\eta|=1}
           \Vert x^\eta \ml F^{-1}(\psi(2^{-j}\cdot)\varphi(\epsilon\cdot))\Vert_{L^1}
           \Vert\p^\eta S_{j-1}u\Vert_{L^\infty}
           \Vert\Delta_jv\Vert_{L^\infty}\\
           &\quad\lesssim
           2^{-j(1+\delta)}\epsilon^{-\delta}
           2^{j(1-\alpha)}2^{-j\beta}
           \Vert u\Vert_\alpha\Vert v\Vert_\beta\\
           &\quad\lesssim
           2^{-j(\alpha+\beta+\delta)}\epsilon^{-\delta}
           \Vert u\Vert_\alpha\Vert v\Vert_\beta.
       \end{align*}
       By the standard Littlewood--Paley summation, we obtain
       \begin{align*}
           \Vert \varphi(\epsilon D)(u\prec v)-u\prec\varphi(\epsilon D)v\Vert_{\alpha+\beta+\delta}
           \lesssim
           \epsilon^{-\delta}\Vert u\Vert_{\ml C^\alpha}\Vert v\Vert_{\ml C^\beta}.
       \end{align*}

       The estimate for $\phi$ follows from the same argument. Indeed, by \eqref{eq;decay;phi}, the corresponding high-frequency estimate has decay of order four, while the same low-frequency argument applies. Hence, for $\delta\in[-1,4]$,
       \begin{align*}
           \sum_{\eta\in\m N^d:|\eta|=1}
           \Vert x^\eta \ml F^{-1}(\psi(2^{-j}\cdot)\phi(\epsilon\cdot))\Vert_{L^1}
           \lesssim
           2^{-j(\delta+1)}\epsilon^{-\delta}.
       \end{align*}
       Repeating the above argument gives
       \begin{align*}
           \Vert \phi(\epsilon D)(u\prec v)-u\prec\phi(\epsilon D)v\Vert_{\alpha+\beta+\delta}
           \lesssim
           \epsilon^{-\delta}\Vert u\Vert_{\ml C^\alpha}\Vert v\Vert_{\ml C^\beta},
       \end{align*}
       which completes the proof.
    \end{proof}
\end{lemma}
\begin{lemma}\label{le;commutator;heat flow}
Let $\nu\in(0,1)$, $0<T\leq 1$, $\alpha\in(0,1)$, $\beta\in\mathbb R$, 
and let
\[
u\in \mathcal L_T^{\alpha,\nu},\qquad v\in C_T\mathcal C^\beta .
\]
Then, for any $\delta\in[-1,2)$, we have
\begin{equation}\label{eq;commutator;heat flow;1}
\|I(u\prec v)-u\prec Iv\|_{C_T\mathcal C^{\alpha+\beta+\delta}}
\lesssim
\|u\|_{\mathcal L_T^{\alpha,\nu}}
\|v\|_{C_T\mathcal C^\beta}.
\end{equation}
\end{lemma}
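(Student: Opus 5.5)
We mimic the proof of the classical commutator lemma of \cite{gubinelli2015paracontrolled}, with the heat semigroup replaced by $V$. Here $I f(t)=\int_0^tV(t-s)f(s)\,\mathrm ds$, and $\mathcal L_T^{\alpha,\nu}$ is read as $C_T\mathcal C^\alpha\cap C_T^{\nu\alpha/2}L^\infty$, as in Section~\ref{sec;time-fractional}. Fix $t\in(0,T]$ and split
\begin{align*}
I(u\prec v)(t)-u(t)\prec Iv(t)
={}&\int_0^t\Bigl[V(t-s)\bigl(u(s)\prec v(s)\bigr)-u(s)\prec V(t-s)v(s)\Bigr]\mathrm ds\\
&+\int_0^t\bigl(u(s)-u(t)\bigr)\prec V(t-s)v(s)\,\mathrm ds
=:A(t)+B(t).
\end{align*}
The term $A$ is a pure spatial commutator at each fixed time. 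The term $B$ carries the time regularity of $u$.

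For $A$ we use $V(r)=r^{\nu-1}\phi_{\theta,1,0}(r^{\nu/(2\sigma)}D)$, here with $\sigma=1$, so $\epsilon=r^{\nu/2}$. Lemma~\ref{le;commutator;decay}, applied with the $\phi$-version (range $\delta'\in[-1,4]$), gives
\[
\Bigl\|V(r)(u\prec v)-u\prec V(r)v\Bigr\|_{\alpha+\beta+\delta'}\lesssim r^{\nu-1-\nu\delta'/2}\|u\|_\alpha\|v\|_\beta .
\]
Integrating this directly at the level $\delta'=\delta+2$ would produce the singular weight $r^{-1-\nu\delta/2}$. So we work dyadic block by block, exactly as in the proof of Lemma~\ref{le;est;Schauder-type}. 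Apply $\Delta_q$, split the $r$-integral at $\eta=2^{-2q/\nu}\wedge t/2$, use a small $\delta'$ on $[0,\eta]$ and a large $\delta'$ on $[\eta,t]$, and sum. This yields $\|\Delta_qA(t)\|_{L^\infty}\lesssim 2^{-q(\alpha+\beta+2)}\|u\|_{C_T\mathcal C^\alpha}\|v\|_{C_T\mathcal C^\beta}$, up to a harmless power of $T\le1$. That is stronger than needed for every $\delta<2$, and the usual embedding covers the lower values of $\delta$.

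For $B$, note that $u(s)-u(t)$ enters only through low frequencies $S_{j-1}$, which are bounded in $L^\infty$ by $\|u\|_{C_T^{\nu\alpha/2}L^\infty}(t-s)^{\nu\alpha/2}$. By Lemma~\ref{le;est;paraproduct}(1) and Lemma~\ref{le;est;fractional heat flow;linear} (regularity gain $\delta''\in[0,4]$),
\[
\Bigl\|\Delta_q\bigl[(u(s)-u(t))\prec V(t-s)v(s)\bigr]\Bigr\|_{L^\infty}\lesssim (t-s)^{\nu\alpha/2+\nu-1-\nu\delta''/2}\,2^{-q(\beta+\delta'')}\|u\|_{\mathcal L_T^{\alpha,\nu}}\|v\|_{C_T\mathcal C^\beta}.
\]
We need the total regularity $\beta+\delta''=\alpha+\beta+\delta$. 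Integrability near $s=t$ holds as long as $\delta''<\alpha+2$, i.e. $\delta<2$, which is precisely the threshold in the statement. To avoid a logarithmic loss we again split at $\eta\sim 2^{-2q/\nu}$, taking $\delta''$ below $\alpha+\delta$ near $s=t$ and above it away from $s=t$. The time integral is then bounded uniformly in $q$, and $C_T\mathcal C^{\alpha+\beta+\delta}$ is controlled.

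The main obstacle is the endpoint bookkeeping. The kernel $V$ only gains up to $4$ derivatives (respectively $2$ for the $U$-type symbol), whereas the classical heat semigroup gains arbitrarily many. So we must check that the exponents chosen in both dyadic splittings stay inside the admissible ranges of Lemma~\ref{le;commutator;decay} and Lemma~\ref{le;est;fractional heat flow;linear}. With $\alpha<1$ and $\delta<2$ one has $\alpha+\delta+\epsilon<4$, which leaves room. The low-frequency block $q=-1$ and the case $t\le 2^{-2q/\nu}$ are handled trivially by the bounded $L^1$ norm of the kernel, as in Lemma~\ref{le;Fourier L1}.
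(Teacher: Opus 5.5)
Your decomposition into the fixed-time commutator $A$ and the increment term $B$ is exactly the paper's, and so are the tools you apply to each: Lemma~\ref{le;commutator;decay} with $\epsilon=r^{\nu/2}$ for $A$, and Lemma~\ref{le;est;paraproduct} together with Lemma~\ref{le;est;fractional heat flow;linear} for $B$. The argument is correct.

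You differ from the paper only in how you integrate in time, and your stated reason for that detour is mistaken. The target space $\mathcal C^{\alpha+\beta+\delta}$ corresponds to $\delta'=\delta$ in Lemma~\ref{le;commutator;decay}, not $\delta'=\delta+2$. At $\delta'=\delta$ the weight is $r^{\nu-1-\nu\delta/2}$, which is integrable precisely because $\delta<2$. The same weight, $(t-s)^{\nu\alpha/2}(t-s)^{\nu-1-\nu(\alpha+\delta)/2}=(t-s)^{\nu-1-\nu\delta/2}$, appears in $B$, so there is no logarithmic loss there either. The paper simply integrates both bounds directly and obtains the factor $T^{\nu(1-\delta/2)}$.

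Your dyadic splitting at $\eta\sim 2^{-2q/\nu}$, borrowed from Lemma~\ref{le;est;Schauder-type}, is therefore not needed for the statement as posed. What it does buy is a constant that stays bounded as $\delta\uparrow 2$, and in fact the endpoint $\delta=2$ for both $A$ and $B$. The price is the extra bookkeeping to keep the exponents inside the admissible ranges $[-1,4]$ and $[0,4]$.
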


\begin{proof}
We decompose
\[
I(u\prec v)-u\prec Iv=I_1(t)+I_2(t),
\]
where
\begin{align*}
I_1(t)
&=\int_0^t
V(t-\tau)(u\prec v)(\tau)
-u(\tau)\prec V(t-\tau)v(\tau)\,\mathrm d\tau,\\
I_2(t)
&=\int_0^t
(u(\tau)-u(t))\prec V(t-\tau)v(\tau)\,\mathrm d\tau .
\end{align*}

For $I_2$, using Lemma~\ref{le;est;fractional heat flow;linear}, we obtain, for
$\delta<2$,
\begin{align*}
\|I_2(t)\|_{\mathcal C^{\alpha+\beta+\delta}}
&\lesssim
\int_0^t
\|u(t)-u(s)\|_{L^\infty}
\|V(t-s)v(s)\|_{\mathcal C^{\alpha+\beta+\delta}}
\,\mathrm ds\\
&\lesssim
\|u\|_{C_T^{\nu\alpha/2}L^\infty}
\|v\|_{C_T\mathcal C^\beta}
\int_0^t
(t-s)^{\frac{\nu\alpha}{2}}
(t-s)^{\nu-1-\frac{\nu}{2}(\alpha+\delta)}
\,\mathrm ds\\
&\lesssim
T^{\nu(1-\delta/2)}
\|u\|_{C_T^{\nu\alpha/2}L^\infty}
\|v\|_{C_T\mathcal C^\beta}.
\end{align*}

For $I_1$, by Lemma~\ref{le;commutator;decay} and the above representation
of $V(t)$, we have
\begin{align*}
\|I_1(t)\|_{\mathcal C^{\alpha+\beta+\delta}}
&\lesssim
\int_0^t
(t-s)^{\nu-1-\frac{\nu\delta}{2}}
\|u(s)\|_{\mathcal C^\alpha}
\|v(s)\|_{\mathcal C^\beta}
\,\mathrm ds\\
&\lesssim
T^{\nu(1-\delta/2)}
\|u\|_{C_T\mathcal C^\alpha}
\|v\|_{C_T\mathcal C^\beta}.
\end{align*}

Combining the estimates for $I_1$ and $I_2$, and using
$\mathcal L_T^{\alpha,\nu}
=C_T\mathcal C^\alpha\cap C_T^{\nu\alpha/2}L^\infty$, we conclude that
\[
\|I(u\prec v)-u\prec Iv\|_{C_T\mathcal C^{\alpha+\beta+\delta}}
\lesssim
\|u\|_{\mathcal L_T^{\alpha,\nu}}
\|v\|_{C_T\mathcal C^\beta}.
\]
This completes the proof.
\end{proof}
We also discuss the H\"older time regularity of the commutator. The above argument cannot directly yield the desired time regularity, since the time regularity of $u$ also needs to be taken into account.

\begin{lemma}
    Moreover, if $\alpha+\beta\in(-2,0)$, then for any
    \[
        \sigma\leq\min\left(\nu+\frac{\nu(\alpha+\beta)}{2},\frac{\nu\alpha}{2}\right),
    \]
    we have the time regularity estimate
    \begin{equation}\label{eq;commutator;heat flow;2}
        \Vert I(u\prec v)-u\prec Iv\Vert_{C_T^{\sigma} L^\infty}
        \lesssim \Vert u\Vert_{\ml L_T^{\alpha,\nu}}\Vert v\Vert_{C_T\ml C^\beta}.
    \end{equation}
    \begin{proof}
        For \eqref{eq;commutator;heat flow;2}, we decompose
        \begin{align*}
            I(u\prec v)(t)-u(t)\prec I(v)(t)=I_1(t)+I_2(t),
        \end{align*}
        where
        \begin{align*}
            I_1(t)&=\int_0^t\big[V(t-\tau)(u\prec v)(\tau)
            -u(\tau)\prec V(t-\tau)v(\tau)\big]\mathrm d\tau,\\
            I_2(t)&=\int_0^t(u(\tau)-u(t))\prec V(t-\tau)v(\tau)\mathrm d\tau.
        \end{align*}
        For $\Vert I_1(t)-I_1(s)\Vert_{L^\infty}$, we have
        \begin{align*}
            \Vert I_1(t)-I_1(s)\Vert_{L^\infty}
            \lesssim& \Vert \int_0^s[V(t-\tau)-V(s-\tau)](u\prec v)(\tau)
            -u(\tau)\prec[V(t-\tau)-V(s-\tau)]v(\tau)\mathrm d\tau\Vert_{L^\infty}\\
            &+\Vert \int_s^t V(t-\tau)(u\prec v)(\tau)
            -u(\tau)\prec V(t-\tau)v(\tau)\mathrm d\tau\Vert_{L^\infty}\\
            \lesssim&\int_0^s\int_{s-\tau}^{t-\tau}
            \Vert V'(\eta)(u\prec v)(\tau)
            -u(\tau)\prec V'(\eta)v(\tau)\Vert_{L^\infty}\mathrm d\eta\mathrm d\tau\\
            &+\int_s^t\Vert V(t-\tau)(u\prec v)(\tau)
            -u(\tau)\prec V(t-\tau)v(\tau)\Vert_{L^\infty}\mathrm d\tau.
        \end{align*}
        For the first integral, by Lemma \ref{le;commutator;decay} and Lemma \ref{le;fractional calculus}, since $\alpha+\beta\in(-2,0)$, for any
        $\sigma\in(0,\nu+\frac{\nu(\alpha+\beta)}2)$, we can choose $\epsilon>0$ sufficiently small such that
        \begin{align*}
            &\int_0^s\int_{s-\tau}^{t-\tau}
            \Vert V'(\eta)(u\prec v)(\tau)
            -u(\tau)\prec V'(\eta)v(\tau)\Vert_{L^\infty}\mathrm d\eta\mathrm d\tau\\
            \lesssim& \int_0^s\int_{s-\tau}^{t-\tau}
            \eta^{\nu-2}
            \Vert \phi_{\theta,1,1}(\eta^{\nu/2}D)(u\prec v)(\tau)
            -u(\tau)\prec\phi_{\theta,1,1}(\eta^{\nu/2}D)v(\tau)
            \Vert_{L^\infty}\mathrm d\eta\mathrm d\tau\\
            \lesssim&
            \Vert u\Vert_{C_T\ml C^\alpha}\Vert v\Vert_{C_T\ml C^\beta}
            \int_0^s\int_{s-\tau}^{t-\tau}
            \eta^{\nu-2+\frac{\nu(\alpha+\beta)}2}
            \mathrm d\eta\mathrm d\tau.
        \end{align*}
        Using Lemma \ref{le;fractional calculus}, since $\sigma\leq\nu+\frac{\nu(\alpha+\beta)}{2}$ annd $\alpha+\beta\in(-2,0)$,
        we obtain
        \begin{align*}
            &\int_0^s\int_{s-\tau}^{t-\tau}
            \Vert V'(\eta)(u\prec v)(\tau)
            -u(\tau)\prec V'(\eta)v(\tau)\Vert_{L^\infty}\mathrm d\eta\mathrm d\tau\\
            &\qquad\lesssim
            T^{\nu+\frac{\nu(\alpha+\beta)}2-\sigma}
            |t-s|^\sigma
            \Vert u\Vert_{C_T\ml C^\alpha}\Vert v\Vert_{C_T\ml C^\beta}.
        \end{align*}
        The second integral can be estimated similarly by Lemma \ref{le;commutator;decay}, which yields
        \begin{align*}
            &\int_s^t\Vert V(t-\tau)(u\prec v)(\tau)
            -u(\tau)\prec V(t-\tau)v(\tau)\Vert_{L^\infty}\mathrm d\tau\\
            &\qquad\lesssim
            |t-s|^{\nu+\frac{\nu(\alpha+\beta-\epsilon)}2}
            \Vert u\Vert_{C_T\ml C^\alpha}\Vert v\Vert_{C_T\ml C^\beta}.
        \end{align*}
        Therefore,
        \begin{equation}
            \Vert I_1\Vert_{C_T^\sigma L^\infty}
            \lesssim \Vert u\Vert_{C_T\ml C^\alpha}
            \Vert v\Vert_{C_T\ml C^\beta}.
        \end{equation}

        For $I_2(t)$, we have
        \begin{align*}
            I_2(t)-I_2(s)
            =&\int_s^t (u(\tau)-u(t))\prec V(t-\tau)v(\tau)\mathrm d\tau\\
            &+\int_0^s(u(s)-u(t))\prec V(t-\tau)v(\tau)\mathrm d\tau\\
            &+\int_0^s(u(\tau)-u(s))\prec
            (V(t-\tau)-V(s-\tau))v(\tau)\mathrm d\tau.
        \end{align*}
        For the first integral, we have
        \begin{align*}
            &\Vert\int_s^t (u(\tau)-u(t))\prec
            V(t-\tau)v(\tau)\mathrm d\tau\Vert_{L^\infty}\\
            &\quad\lesssim
            \Vert u\Vert_{C_T^{\nu\alpha/2}L^\infty}
            \Vert v\Vert_{C_T\ml C^\beta}
            \int_s^t(t-\tau)^{\nu\alpha/2}
            (t-\tau)^{\nu-1+\nu\beta/2}\mathrm d\tau\\
            &\quad\lesssim
            |t-s|^{\nu+\frac{\nu(\alpha+\beta)}2}
            \Vert u\Vert_{C_T^{\nu\alpha/2}L^\infty}
            \Vert v\Vert_{C_T\ml C^\beta}\\
            &\quad\lesssim
            |t-s|^\sigma
            \Vert u\Vert_{\ml L_T^{\alpha,\nu}}
            \Vert v\Vert_{C_T\ml C^\beta}.
        \end{align*}
        For the second integral, we have
        \begin{align*}
            &\Vert \int_0^s (u(s)-u(t))\prec
            V(t-\tau)v(\tau)\mathrm d\tau\Vert_{L^\infty}\\
            &\quad\lesssim
            \Vert u\Vert_{C_T^{\nu\alpha/2}L^\infty}
            |t-s|^{\nu\alpha/2}
            \int_0^s\Vert V(t-\tau)v(\tau)\Vert_{\ml C^\epsilon}\mathrm d\tau\\
            &\quad\lesssim
            \Vert u\Vert_{C_T^{\nu\alpha/2}L^\infty}
            \Vert v\Vert_{C_T\ml C^\beta}
            |t-s|^{\nu\alpha/2}
            \int_0^s(t-\tau)^{\nu-1+\nu\beta/2}\mathrm d\tau\\
            &\quad\lesssim
            T^{\nu+\frac{\nu\beta}2}
            |t-s|^{\nu\alpha/2}
            \Vert u\Vert_{C_T^{\nu\alpha/2}L^\infty}
            \Vert v\Vert_{C_T\ml C^\beta}\\
            &\quad\lesssim
            |t-s|^\sigma
            \Vert u\Vert_{\ml L_T^{\alpha,\nu}}
            \Vert v\Vert_{C_T\ml C^\beta},
        \end{align*}
        where we used $\sigma\leq\nu\alpha/2$.

        For the third integral, we have
        \begin{align*}
            &\Vert \int_0^s(u(\tau)-u(s))\prec
            (V(t-\tau)-V(s-\tau))v(\tau)\mathrm d\tau\Vert_{L^\infty}\\
            &\quad\lesssim
            \int_0^s\int_{s-\tau}^{t-\tau}
            \Vert (u(\tau)-u(s))\prec V'(\eta)v(\tau)\Vert_{L^\infty}
            \mathrm d\eta\mathrm d\tau\\
            &\quad\lesssim
            \Vert u\Vert_{C_T^{\nu\alpha/2}L^\infty}
            \Vert v\Vert_{C_T\ml C^\beta}
            \int_0^s(s-\tau)^{\nu\alpha/2}
            \int_{s-\tau}^{t-\tau}
            \eta^{\nu-2+\nu(\beta)/2}\mathrm d\eta\mathrm d\tau\\
            &\quad\lesssim
            T^{\nu+\frac{\nu(\alpha+\beta)}2-\sigma}
            |t-s|^\sigma
            \Vert u\Vert_{C_T^{\nu\alpha/2}L^\infty}
            \Vert v\Vert_{C_T\ml C^\beta}.
        \end{align*}
        Here we used the same argument as in the estimate of $I_1(t)-I_1(s)$. Therefore,
        \begin{align*}
            \Vert I_2\Vert_{C_T^\sigma L^\infty}
            \lesssim
            \Vert u\Vert_{\ml L_T^{\alpha,\nu}}
            \Vert v\Vert_{C_T\ml C^\beta}.
        \end{align*}
        Combining the estimates for $I_1$ and $I_2$, we obtain
        \begin{align*}
            \Vert I(u\prec v)-u\prec Iv\Vert_{C_T^\sigma L^\infty}
            \lesssim
            \Vert u\Vert_{\ml L_T^{\alpha,\nu}}
            \Vert v\Vert_{C_T\ml C^\beta},
        \end{align*}
        which completes the proof.
    \end{proof}
\end{lemma}
 	\bibliography{ref}

\newcommand{\etalchar}[1]{$^{#1}$}
\begin{thebibliography}{MBSB02}

\bibitem[BCD11]{bahouri2011fourier}
Hajer Bahouri, Jean-Yves Chemin, and Rapha{\"e}l Danchin.
\newblock {\em Fourier analysis and nonlinear partial differential equations},
  volume 343.
\newblock Springer, 2011.

\bibitem[Cap66]{caputo1966linear}
Michele Caputo.
\newblock Linear models of dissipation whose q is almost frequency independent.
\newblock {\em Annals of Geophysics}, 19(4):383--393, 1966.

\bibitem[Cap67]{caputo1967linear}
Michele Caputo.
\newblock Linear models of dissipation whose q is almost frequency
  independent—ii.
\newblock {\em Geophysical journal international}, 13(5):529--539, 1967.

\bibitem[CC18]{catellier2018paracontrolled}
R{\'e}mi Catellier and Khalil Chouk.
\newblock Paracontrolled distributions and the 3-dimensional stochastic
  quantization equation.
\newblock {\em The Annals of Probability}, 46(5):2621--2679, 2018.

\bibitem[GIP15]{gubinelli2015paracontrolled}
Massimiliano Gubinelli, Peter Imkeller, and Nicolas Perkowski.
\newblock Paracontrolled distributions and singular {PDEs}.
\newblock In {\em Forum of Mathematics, Pi}, volume~3, page~e6. Cambridge
  University Press, 2015.

\bibitem[GKO23]{gubinelli2023paracontrolled}
Massimiliano Gubinelli, Herbert Koch, and Tadahiro Oh.
\newblock Paracontrolled approach to the three-dimensional stochastic nonlinear
  wave equation with quadratic nonlinearity.
\newblock {\em Journal of the European Mathematical Society}, 26(3):817--874,
  2023.

\bibitem[GP17]{gubinelli2017kpz}
Massimiliano Gubinelli and Nicolas Perkowski.
\newblock Kpz reloaded.
\newblock {\em Communications in Mathematical Physics}, 349:165--269, 2017.

\bibitem[Hai13]{hairer2013solving}
Martin Hairer.
\newblock Solving the kpz equation.
\newblock {\em Annals of mathematics}, pages 559--664, 2013.

\bibitem[Hai14]{hairerTheoryRegularityStructures2014}
Martin Hairer.
\newblock A theory of regularity structures.
\newblock {\em Inventiones mathematicae}, 198(2):269--504, 2014.

\bibitem[J{\etalchar{+}}21]{jin2021fractional}
Bangti Jin et~al.
\newblock {\em Fractional differential equations}, volume 206.
\newblock Springer, 2021.

\bibitem[LL22]{li2022finite}
Changpin Li and Zhiqiang Li.
\newblock The finite-time blow-up for semilinear fractional diffusion equations
  with time $\psi$-caputo derivative.
\newblock {\em Journal of Nonlinear Science}, 32(6):82, 2022.

\bibitem[LRdS18]{liu2018quasi}
Wei Liu, Michael R{\"o}ckner, and Jos{\'e}~Lu{\'\i}s da~Silva.
\newblock Quasi-linear (stochastic) partial differential equations with
  time-fractional derivatives.
\newblock {\em SIAM Journal on Mathematical Analysis}, 50(3):2588--2607, 2018.

\bibitem[LZR16]{li2016linear}
Dongfang Li, Chengjian Zhang, and Maohua Ran.
\newblock A linear finite difference scheme for generalized time fractional
  burgers equation.
\newblock {\em Applied Mathematical Modelling}, 40(11-12):6069--6081, 2016.

\bibitem[Mai22]{mainardi2022fractional}
Francesco Mainardi.
\newblock {\em Fractional calculus and waves in linear viscoelasticity}.
\newblock World Scientific Publishing Company, 2022.

\bibitem[MBSB02]{meerschaert2002stochastic}
Mark~M Meerschaert, David~A Benson, Hans-Peter Scheffler, and Boris Baeumer.
\newblock Stochastic solution of space-time fractional diffusion equations.
\newblock {\em Physical Review E}, 65(4):041103, 2002.

\bibitem[MK00]{metzler2000random}
Ralf Metzler and Joseph Klafter.
\newblock The random walk's guide to anomalous diffusion: a fractional dynamics
  approach.
\newblock {\em Physics reports}, 339(1):1--77, 2000.

\bibitem[MLP07]{mainardi2007fundamental}
Francesco Mainardi, Yuri Luchko, and Gianni Pagnini.
\newblock The fundamental solution of the space-time fractional diffusion
  equation.
\newblock {\em arXiv preprint cond-mat/0702419}, 2007.

\bibitem[Mom06]{momani2006non}
Shaher Momani.
\newblock Non-perturbative analytical solutions of the space-and
  time-fractional burgers equations.
\newblock {\em Chaos, Solitons \& Fractals}, 28(4):930--937, 2006.

\bibitem[MS13]{meerschaert2013inverse}
Mark~M Meerschaert and Peter Straka.
\newblock Inverse stable subordinators.
\newblock {\em Mathematical modelling of natural phenomena}, 8(2):1--16, 2013.

\bibitem[Oks13]{oksendal2013stochastic}
Bernt Oksendal.
\newblock {\em Stochastic differential equations: an introduction with
  applications}.
\newblock Springer Science \& Business Media, 2013.

\bibitem[Pr{\"u}13]{pruss2013evolutionary}
Jan Pr{\"u}ss.
\newblock {\em Evolutionary integral equations and applications}.
\newblock Birkh{\"a}user, 2013.

\bibitem[Tua20]{tuan2020asymptotic}
HT~Tuan.
\newblock On the asymptotic behavior of solutions to time-fractional elliptic
  equations driven a multiplicative white noise.
\newblock {\em arXiv preprint arXiv:2002.06054}, 2020.

\bibitem[YK17]{yokucs2017numerical}
As{\i}f Yoku{\c{s}} and Do{\u{g}}an Kaya.
\newblock Numerical and exact solutions for time fractional burgers' equation.
\newblock {\em Journal of Nonlinear Sciences \& Applications (JNSA)},
  10(7):3419, 2017.

\bibitem[Zac09]{zacher2009weak}
Rico Zacher.
\newblock Weak solutions of abstract evolutionary integro-differential
  equations in hilbert spaces.
\newblock {\em Funkcialaj Ekvacioj}, 52(1):1--18, 2009.

\end{thebibliography}
    \bibliographystyle{alpha}
	\end{document}